\documentclass[a4paper,draft,reqno,12pt]{amsart}

\usepackage{cmap}
\usepackage[T2A]{fontenc}
\usepackage[utf8]{inputenc}
\usepackage[english]{babel}

\usepackage{amscd}
\usepackage{euscript}
\usepackage{amssymb}
\usepackage{amsmath}
\usepackage{amsthm}
\usepackage{enumitem}
\usepackage[hidelinks]{hyperref}
\usepackage[all]{xy}
\usepackage{secdot} %

\newcommand{\ZZ}{\mathbb{Z}}
\newcommand{\NN}{\mathbb{N}}

\newcommand{\KK}{\mathbb{K}}
\newcommand{\Aut}{\mathrm{Aut}}
\newcommand{\Ker}{\mathrm{Ker}}

\newcommand{\gr}{\mathrm{gr}}
\newcommand{\LT}{\mathrm{LT}}
\newcommand{\LM}{\mathrm{LM}}

\renewcommand{\phi}{\varphi}

\theoremstyle{plain}
\newtheorem{theorem}{Theorem}[section]
\newtheorem{proposition}[theorem]{Proposition}
\newtheorem{lemma}[theorem]{Lemma}

\theoremstyle{definition}
\newtheorem{definition}[theorem]{Definition}

\theoremstyle{remark}
\newtheorem{remark}[theorem]{Remark}

\usepackage[a4paper]{geometry}
\begin{document}

\date{}

\title{Isomorphism classes of multi Danielewski varieties}

\author{P.~Evdokimova}
\address{Lomonosov Moscow State University, Faculty of Mechanics and Mathematics, Department of Higher Algebra, Leninskie Gory 1, Moscow, 119991 Russia;
\linebreak and \linebreak
HSE University, Faculty of Computer Science, Pokrovsky Boulvard 11, Moscow, 109028 Russia.}
\email{polina.evdokimova@math.msu.ru}

\thanks{The work was supported by the Theoretical Physics and Mathematics Advancement Foundation <<BASIS>>, project number 24-7-2-20-1.}

\subjclass[2020]{Primary 14R05, 13A50; \ Secondary 14R20, 13A02}
\keywords{Cancellation problem, Danielewski surface, locally nilpotent derivation, Makar-Limanov invariant, automorphism.}

\sloppy

\maketitle

\begin{abstract}
We study multi Danielewski varieties, which are the generalization of Danielewski varieties and Double Danielewski varieties. The paper contains description of the Makar-Limanov invariant of these varieties and of their isomorphism classes.
\end{abstract}

\section{Introduction}

Let $\KK$ be an algebraically closed field of characteristic zero, and let $X$ and $Y$ be irreducible affine algebraic varieties over $\KK$. The {\it Generalized Cancellation Problem}, formulated by O. Zariski, asks whether $X \times \KK \simeq Y \times \KK$ implies $X \simeq Y$. Several counterexamples to this conjecture are known. The first counterexample was constructed by Danielewski. In 1989, in his paper~\cite{D}, he introduced a class of varieties defined by equations of the form $xy^n = f(z)$. Such varieties are now called {\it Danielewski surfaces}. For $n = 1$ and $n = 2$, Danielewski surfaces provide a counterexample to the Generalized Cancellation Problem. See, e.g.,~\cite{M1} and~\cite{M2}, where it was proved that the varieties $$\mathbb{V}(xy - f(z)) \; \; \text{and} \; \; \mathbb{V}(x^2y - f(z))$$ are not isomorphic by means of the {\it Makar-Limanov invariant}. The Makar-Limanov invariant was first introduced in~\cite{M3}, where it was called the <<ring of absolute constants>> or the <<absolute kernel>>. This invariant often allows one to prove that varieties are non-isomorphic. For instance, it was used to show that the Koras–Russell cubic is not isomorphic to an affine space; see~\cite{M3}.

In~\cite{M1} and~\cite{M2}, Makar-Limanov in particular computes generators of the automorphism groups of Danielewski surfaces. These works motivated the study of automorphisms of Danielewski surfaces and their generalizations. For example, in~\cite{C} the automorphism group of the variety $$\mathbb{V}(xy^n - z^2 - h(y)z)$$ over an arbitrary field is described. In~\cite{DP}, a more general class of surfaces defined by the equation $$x^nz - Q(x,y)= 0, \; \; \text{where} \; \; n \geq 2,$$ and $Q(x,y)$ is a polynomial with coefficients in an arbitrary field, was considered. For such surfaces, generators of their automorphism groups were computed. Later, a complete classification of varieties of this type up to isomorphism and automorphism of an ambient affine space was obtained; see~\cite{P}. Another generalization of Danielewski surfaces was studied in~\cite{BV}: for surfaces of the form $$\mathbb{V}(xf(x) - Q(y,z)),$$ $$\text{where} \; \; \deg f \geq 2, d \geq 2, \; \; \; \; Q(y,z) = z^d + s_{d-1}(y)z^{d-1} + \ldots + s_0(y),$$ the Makar-Limanov invariant and generators of the automorphism group were computed.

In 2007, another generalization of Danielewski surfaces was proposed in~\cite{Du}. Namely, varieties defined by equations of the form
$$xy_1^{k_1}\ldots y_m^{k_m} = z^d + s_{d-1}(y_1, \ldots, y_m)z^{d-1} + \ldots + s_0(y_1, \ldots, y_m).$$
were considered. Such varieties are called {\it Danielewski varieties}. By computing the Makar-Limanov invariant for these varieties, counterexamples to the Generalized Cancellation Problem in an arbitrary dimension were obtained. In~\cite{G}, the automorphisms of Danielewski varieties were described.

In~\cite{GS}, varieties defined by the pair of equations
$$\{x^dy - P(x, z) = 0, x^et - Q(x, y, z) = 0\}$$
over a field of an arbitrary characteristic were considered. Such varieties were called {\it double Danielewski surfaces}. The Makar-Limanov invariant was computed for them and their isomorphism classes were described. Moreover, it was proved in~\cite{GS} that double Danielewski surfaces provide counterexamples to the Generalized Cancellation Problem; see~\cite[Corollary 3.15]{GS}. 

In \cite{GP} the class of {\it multi Danielewski varieties} is introduced. This class contains  both the classes of double Danielewski surfaces and the classes of Danielewski varieties. A milti Danielewski variety is given by a systems of equations of the following form:
\begin{equation*}
\begin{cases}
    x_1y_1^{k_{11}} \ldots y_m^{k_{1m}} = f_1(z, y_1, \ldots, y_m),\\
    x_2y_1^{k_{21}} \ldots y_m^{k_{2m}} = f_2(z, y_1, \ldots, y_m, x_1),\\
     \; \;\; \;\; \;\; \;\; \;\; \;\; \;\; \;\; \;\; \;\; \;\; \;\ldots\\
    x_ny_1^{k_{n1}} \ldots y_m^{k_{nm}} = f_n(z, y_1, \ldots, y_m, x_1, \ldots, x_{n-1}).
\end{cases}
\end{equation*}

It is proved in~\cite[Example~4.14]{GP} that under some conditions $X\times \KK\cong Y\times \KK$, where $X$ and $Y$ are multi Danielewski varieties differing from each other only by exponents $k_{nj}$ in the last equation. Therefore, to prove that multi Danielewski varieties give new counterexamples to the Generalized Cancellation Problem one should prove that $X$ and $Y$ are not isomorphic. 

In this paper we classify multi Danielewski varieties (satisfying some very slight conditions on the parameters) up to isomorphism. Firstly, we compute the Makar-Limanov invariant of multi Danielewski varieties for $n =2$ in Sections~\ref{Prl} and~\ref{sML}; see Theorem~\ref{ML}. Theorem~\ref{main1} gives a classification of multi Danielewski varieties for $n=2$ up to isomorphism. Section~\ref{Gc} covers general case, when $n$ is an arbitrary positive integer: using the previous results in general case we also describe the Makar-Limanov invariant and obtain a classification of multi Danielewski varieties, see Theorems~\ref{ML1} and~\ref{main111}.

The author is grateful to S. Gaifullin for statement of the problem and continuous assistance during the work on the paper.

\section{Locally nilpotent derivations}

Some preliminaries on the field of locally nilpotent derivations are gathered in this section. Detailed information on this area can be found, for example, in~\cite{F}.

Let $A$ be a finitely generated algebra over a field $\KK$. By the word <<algebra>> from now on we mean unital commutative associative algebra over the field $\KK$.

\begin{definition}
    A {\it derivation} of an algebra $A$ is a linear operator $\delta\colon A\rightarrow A$, satisfying the Leibniz rule:
    $\delta(fg)=f\delta(g)+g\delta(f)$.
\end{definition}

\begin{definition}
A derivation $\delta\colon A\rightarrow A$ is called {\it locally nilpotent} (LND) if for every $f\in A$ there is a non-negative integer $n$ such that $\delta^n(f)=0$.
\end{definition}
\noindent We will denote the set of locally nilpotent derivations of $A$ by $\mathrm{LND}(A)$.

Denote the transcendence degree of an algebra $A$ over $\KK$ by $\mathrm{tr.deg.}A$.

\begin{proposition}\label{LND} \cite[Principle 11 (e)]{F}
Let $A$ be an integral domain. Fix an arbitrary LND $\delta$ and introduce the notation $B := \Ker(\delta)$. Then $B$ is a factorially closed subalgebra of $A$. Moreover, the following is true: $\mathrm{tr.deg.}B = \mathrm{tr.deg.}A - 1$.
\end{proposition}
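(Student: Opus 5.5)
The plan is to prove the three assertions in turn: $A$ is a domain, $\delta\neq 0$, and $B=\Ker(\delta)$. I am assuming $\delta\neq 0$, since for $\delta=0$ the transcendence degree claim is false. The main tool is the degree function $\deg_\delta(f)=\max\{n : \delta^n(f)\neq 0\}$ for $f\neq 0$, with $\deg_\delta(0)=-\infty$. Local nilpotency says exactly that this is finite.

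\emph{Additivity of the degree.} The first step is to show that $\deg_\delta(fg)=\deg_\delta(f)+\deg_\delta(g)$ for nonzero $f,g$. Put $p=\deg_\delta f$ and $q=\deg_\delta g$. The Leibniz rule iterates to
\[
\delta^{n}(fg)=\sum_{i=0}^{n}\binom{n}{i}\delta^i(f)\,\delta^{n-i}(g).
\]
For $n=p+q$, the only surviving term is $\binom{p+q}{p}\delta^p(f)\delta^q(g)$. This term is nonzero because $A$ is a domain and $\operatorname{char}\KK=0$. For $n>p+q$, every term vanishes.

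\emph{Factorial closedness.} $B$ is clearly a subalgebra, since $\delta(1)=0$ and the Leibniz rule applies. Now suppose $fg\in B\setminus\{0\}$. Then $\deg_\delta(fg)=0$, so $\deg_\delta f=\deg_\delta g=0$, which means $f,g\in B$.

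\emph{Transcendence degree.} Here I choose a local slice. Since $\delta\neq0$, pick $r$ with $\deg_\delta r=1$: take any $a$ with $\deg_\delta a=m\ge1$ and set $r=\delta^{m-1}(a)$. Then $s:=\delta(r)$ is a nonzero element of $B$.

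Next, consider the localization $A_s$, to which $\delta$ extends as a locally nilpotent derivation with kernel $B_s$. I claim that $A_s=B_s[r/s]$, with $r/s$ transcendental over $\mathrm{Frac}(B)$. To prove this, I use the Dixmier map
\[
\pi(a)=\sum_{i\ge0}\frac{(-1)^i}{i!}\,\delta^i(a)\,\frac{(r/s)^i}{1}.
\]
One checks that $\delta\pi(a)=0$, and that $a=\sum_i \pi(\delta^i a)(r/s)^i/i!$, which gives the generation. For algebraic independence, take a relation $\sum b_i (r/s)^i=0$ with $b_i\in B_s$ and $b_N\neq0$. Applying $\delta^N$ yields $N!\,b_N=0$, a contradiction.

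Consequently $\mathrm{Frac}(A)=\mathrm{Frac}(B)(r)$ with $r$ transcendental over $\mathrm{Frac}(B)$. Hence $\mathrm{tr.deg.}\,A=\mathrm{tr.deg.}\,B+1$.

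\emph{Main obstacle.} The delicate part is the slice step, specifically verifying the identity $a=\sum_i \pi(\delta^i a)(r/s)^i/i!$. This is the only genuinely computational point.

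A secondary subtlety is finiteness of $\mathrm{tr.deg.}\,B$. This follows from $B\subset A$, where $A$ is finitely generated; the transcendence degree is computed through fraction fields, so finite generation of $B$ is not needed.
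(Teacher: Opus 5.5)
Your argument is correct and is essentially the standard proof of the cited result (Freudenburg's Principle 11): additivity of $\deg_\delta$ gives factorial closedness, and localizing at $s=\delta(r)$ for a local slice $r$, where the Dixmier map gives $A_s=B_s[r/s]$ with $r/s$ transcendental, yields $\mathrm{Frac}(A)=\mathrm{Frac}(B)(r)$. The paper does not prove this fact and only cites it; you are also right that $\delta\neq 0$ must be assumed for the transcendence-degree equality to hold.
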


Consider a grading on an algebra $A$ by an abelian group $G$
$$
A=\bigoplus_{g\in G}A_g.
$$
A derivation $\delta$ is called homogeneous if it sends homogeneous elements to homogeneous ones. It follows from the Leibniz rule that for any nonzero homogeneous derivation $\delta$ there is an element $g_0\in G$ such that $\delta$ sends $A_g$ to $A_{g+g_0}$. An element $g_0$ is called the {\it degree} of the derivation $\delta$. It is easy to prove that any derivation can be decomposed into a finite sum of homogeneous ones. We call these homogeneous summands {\it homogeneous components} of the derivation. 

The following lemma is proved in \cite{R}:
\begin{lemma}\label{hl}
    Consider a $\ZZ$-grading on $A$. If $\delta \in \mathrm{LND}(A)$ and $\delta = \sum_{i = k}^{l} \delta_i$, then $\delta_k$ and $\delta_l$ are locally nilpotent.  
\end{lemma}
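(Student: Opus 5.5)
We prove this by tracking the behaviour of $\delta^n$ on homogeneous elements and looking only at the extreme degrees. Since $A$ is finitely generated, it is enough to show that $\delta_l$ (and symmetrically $\delta_k$) kills each generator after finitely many steps. Indeed, a derivation that is locally nilpotent on a generating set is locally nilpotent on the whole algebra: by the Leibniz rule, $\delta^N(fg)=\sum\binom{N}{i}\delta^i(f)\delta^{N-i}(g)$. We may also split an arbitrary element into homogeneous components and treat each one separately. So we fix a homogeneous element $a\in A_d$ with $a\neq 0$ and aim to show that $\delta_l^N(a)=0$ for some $N$.

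The key step is to expand $\delta^N(a)=\bigl(\sum_{i=k}^{l}\delta_i\bigr)^N(a)$ into words $\delta_{i_1}\cdots\delta_{i_N}(a)$. Each word lands in the graded piece $A_{d+i_1+\dots+i_N}$. The total degree $d+Nl$ is attained by exactly one word, namely $\delta_l^N$, because any other word has $i_1+\dots+i_N<Nl$. Hence the homogeneous component of $\delta^N(a)$ of degree $d+Nl$ equals $\delta_l^N(a)$.

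Since $\delta$ is locally nilpotent, there is an $N$ with $\delta^N(a)=0$. All homogeneous components of $\delta^N(a)$ then vanish, and in particular $\delta_l^N(a)=0$. The same argument with the minimal total degree $d+Nk$ gives $\delta_k^N(a)=0$, because $\delta_k^N$ is the only word whose degree sum is $Nk$.

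Finally we pass to all of $A$. Each $a\in A$ is a finite sum of homogeneous elements, and for each of them we have found an exponent killing it. Taking the maximum of these exponents gives an $N$ with $\delta_l^N(a)=0$, and likewise for $\delta_k$. The reduction to generators is therefore not even needed. The only point that needs care is confirming that no other word contributes to the extreme degree, and this is immediate from $k\le i_j\le l$. If $\delta$ is homogeneous, so that $k=l$, the statement is trivial.
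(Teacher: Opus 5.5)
Your argument is correct. For homogeneous $a\in A_d$, the component of $\delta^N(a)$ in degree $d+Nl$ (resp.\ $d+Nk$) is exactly $\delta_l^N(a)$ (resp.\ $\delta_k^N(a)$), so $\delta^N(a)=0$ forces both to vanish, and the preliminary reduction to generators is indeed superfluous. The paper gives no proof of its own and simply cites \cite{R}; your extreme-degree comparison is the standard elementary argument for this fact.
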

Now, suppose there is a $\ZZ^n$-grading and $\delta$ is decomposed into a sum of homogeneous derivations with respect to the
grading. Then the convex hull of the degrees of homogeneous components forms a polyhedron. In this case it turns out that the components that correspond to the vertices of the polyhedron are locally nilpotent derivations. So if there is a nonzero LND $\delta$ of $A$, then there is a nonzero homogeneous LND of $A$, namely, the component corresponding to one of the vertices of the polyhedron.
Note that if $\delta$ is a homogeneous LND of a graded algebra $A$, then $\Ker(\delta)$ is a graded subalgebra of $A$.

Given an LND $\delta$, one can construct its {\it exponent} that is defined by the series:
 $$\exp(\delta) := \sum_{i \geqslant 0} \frac{\delta^i}{i!}.$$
 Applying $\exp(\delta)$ to any element $a\in A$, one obtains a finite sum since $\delta$ is locally nilpotent. Hence, an exponent $\exp(\delta)$ is correctly defined. It is easy to show that $\exp(\delta)$ is an automorphism of the algebra $A$. Consider algebraic subgroups of the group $\mathrm{Aut}(A)$ of automorphisms of the algebra $A$, isomorphic to the additive group of the ground field $\KK$. We shall call such subgroups $\mathbb{G}_a$-subgroups. Each LND $\delta$ corresponds to the following subgroup $$\mathcal{H}_\delta=\{ \exp(t\delta), t \in\KK \}.$$ This correspondence sets a bijection between LNDs that are considered up to the proportionality with a scalar factor and $\mathbb{G}_a$-subgroups in $\Aut(A)$.
 
\begin{definition}
    A {\it degree} of an element $f \in A, f \neq 0,$ with respect to LND $\delta$ is the number $\deg_{\delta}(f) := \min \{n \in \NN \; | \; \delta^{n+1}(f) = 0\}$ and $\deg_{\delta}(0) := -\infty$.
\end{definition}

\begin{definition}
    The {\it Makar-Limanov Invariant} of an algebra $A$ is a subalgebra of $A$, defined by: $$\mathrm{ML}(A) = \bigcap_{\delta \in \mathrm{LND}(A)} \Ker(\delta).$$
\end{definition}

\begin{definition}
    An algebra $A$ is called {\it rigid} if $\mathrm{ML}(A) = A$. In other words, there are no nonzero LNDs of $A$.
\end{definition}
\noindent Equivalently, there are no $\mathbb{G}_a$-subgroups in the group $\Aut(A)$.
\begin{definition}
    LNDs $\delta$ and $\partial$ are called {\it equivalent} if $\Ker(\delta) = \Ker(\partial)$.
\end{definition}

\begin{definition}
    An algebra $A$ is called {\it semi-rigid} if all LNDs of $A$ are equivalent.
\end{definition}
\noindent Note that if for some $\delta \in$ $\mathrm{LND}(A)$ holds $\Ker(\delta) = \mathrm{ML}(\delta)$, then $A$ is semi-rigid.
\begin{definition}
    An affine algebraic variety $X$ is {\it rigid} if its algebra of regular functions is rigid.
\end{definition}

\section{Filtrations}
Let $A$ be an algebra over a field $\KK$. 
\begin{definition}
    A set $\{A_n\}_{n \in \ZZ}$ of vector spaces $A$ over a field $\KK$ is called a {\it $\ZZ$-filtration}, if The following holds:
    \begin{enumerate}
        \item[(1)] $A_n \subseteq A_{n+1}$ for all $n \in \ZZ$,
        \item[(2)] $A = \bigcup_{n \in \ZZ}A_n$,
        \item[(3)] $\bigcap_{n \in \ZZ} A_n = \{0\}$,
        \item[(4)] $(A_n \setminus A_{n-1}) \cdot (A_m \setminus A_{m-1}) \subseteq A_{n+m} \setminus A_{n+m-1}$ for all $n, m \in \ZZ$.
    \end{enumerate}
\end{definition}
\begin{definition}
    $\ZZ$-filtration is called {\it admissible}, if there exists a finite generating set $\Gamma$ of $A$ such that, for any $n \in \ZZ$ and $a \in A_n$, $a$ can be written as a finite sum of monomials in elements of $\Gamma$ and each of these monomials is an element of $A_n$.
\end{definition}
Let $A$ be an algebra of regular functions of some affine algebraic variety $X$. Then
$$A = \KK[X] \simeq \KK[X_1, \ldots, X_n]/I,$$
where $I$ is an ideal, consisting of polynomials, that take a zero value on $X$.
Fix a tuple of integers $\omega = (\omega_1, \ldots, \omega_n)$. We use this tuple to construct a filtration of $A$ as follows. A nonzero polynomial $F \in \KK[X_1, \ldots, X_n]$ is the sum of some monomials with coefficients in $\KK^{\times}. $ Denote the set of these monomials by $M(F)$. Introduce a degree function $\deg_{\omega}$ on $\KK[X_1, \ldots, X_n]$. For every $F \in \KK[X_1, \ldots, X_n]$ put
$$\deg_{\omega}(F) := \max\{\deg_{\omega}(G) \; | \; G \in M(F)\}, \; \text{if} \; F \neq 0,$$
$$\deg_{\omega}(0) := - \infty,$$
where $\deg_{\omega}(X_1^{a_1} \ldots X_n^{a_n}) := a_1 \omega_1 + \ldots + a_n \omega_n$ for any $a_i \in \ZZ_{\geq0}$.\\
Let $\pi$ denote a homomorphism $\pi: \KK[X_1, \ldots, X_n] \rightarrow A$. Then for every $f \in A$ put
$$d_{\omega}(f) := \min_{F \in \pi^{-1}(f)}\deg_{\omega}F, \; \text{if} \; f \neq 0,$$
$$d_{\omega}(0) := - \infty.$$
Note that the function $d_{\omega} : A \rightarrow \ZZ \cup \{-\infty\}$ is {\it semi-degree function}, that is for all $a, b \in A$ the following holds:
\begin{enumerate}
    \item[(1)] $d_{\omega}(a+b) \leq \max\{d_{\omega}(a), d_{\omega}(b)\},$
    \item[(2)] $d_{\omega}(ab) \leq d_{\omega}(a) + d_{\omega}(b),$
    \item[(3)] $d_{\omega}(a) = -\infty \Leftrightarrow a = 0.$ 
\end{enumerate}
Moreover, a collection of subsets $A_n = \{a \in A \; | \; d_{\omega}(a) \leq n\}$ is a filtration. We will call this filtration of $A$, constructed via $\omega$, {\it $\omega$-weight filtration}.
\section{Associated graded algebras}
Let $\{A_n\}_{n \in \ZZ}$ be a $\ZZ$-filtration of an algebra $A$.
\begin{definition}
    {\it Associated graded algebra} of an algebra $A$ is a vector space $$\gr(A) := \bigoplus_{i\in \ZZ}A_i /A_{i-1}$$
    with multiplication, which is defined on homogeneous elements
   $$a + A_{i-1} \in A_i/A_{i-1} \; \; \text{and} \; \; b + A_{j-1} \in A_j/A_{j-1}$$
    by the following rule:
    $$(a + A_{i-1}) \cdot(b + A_{j-1}) = ab + A_{i+j-1} \in A_{i+j}/A_{i+j-1}.$$
    It extends to the rest of elements via distributivity.
\end{definition}
Define a map $\rho : A \rightarrow \gr(A)$ as follows:
$$\rho(a) = a + A_{n-1}, \; \text{ if } \; a \in A_n \setminus A_{n-1}.$$
We will call this map {\it a natural map from an algebra to the associated graded algebra}. Note that $\rho$ may not be an algebra homomorphism.
Let $\delta$ be an LND of an algebra $A$.
\begin{definition}
    An LND $\delta$ {\it preserves a filtration $\{A_n\}_{n \in \ZZ}$}, if there is $i_0 \in \ZZ$ such that for every $i \in \ZZ$ we have $\delta(A_i) \subseteq A_{i + i_0}$.
\end{definition}

\noindent Define a derivation $\gr(\delta)$ of an algebra $\gr(A)$ for those derivations $\delta$ of an algebra $A$ that preserve a filtration. We set:
$$\gr(\delta) := 0, \; \text{ if } \; \delta \equiv 0,$$
$$\gr(\delta): A_i/A_{i-1} \rightarrow A_{i+t}/A_{i+t-1}, \; a + A_{i-1} \longmapsto \delta(a) + A_{i+t-1},$$
where $a$ is a homogeneous element and $t$ is minimal integer such that $\delta(A_i) \subseteq A_{i+t}$. The definition extends to the rest of elements of $\gr(A)$ via linearity. Note that a derivation $\gr(\delta)$ is locally nilpotent, see~\cite[Principle 15]{F}. Moreover, if $A$ is a finitely generated algebra then any LND of $A$ preserves any filtration, see~\cite[Lemma 6.2]{CS}.

\begin{proposition}\cite[Corollary 6.3]{CS}\label{grrig}
    Let $A = \KK[X_1, \ldots, X_n] / I$ be an algebra with $\omega$-weight filtration $\{A_i\}_{i \in \ZZ}$ and $\gr(A)$ be an associated graded algebra, constructed on this filtration. Then a rigidity of $\gr(A)$ implies a rigidity of $A$.
\end{proposition}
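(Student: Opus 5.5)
The plan is to argue by contraposition: if $A$ admits a nonzero LND $\delta$, then $\gr(A)$ also admits one, namely $\gr(\delta)$. Rigidity of $\gr(A)$ therefore forces rigidity of $A$.

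First, we need $\gr(\delta)$ to be defined. Since $A=\KK[X_1,\ldots,X_n]/I$ is finitely generated, $\delta$ preserves the $\omega$-weight filtration by \cite[Lemma 6.2]{CS}. Concretely, write $\Gamma=\{\pi(X_1),\ldots,\pi(X_n)\}$ and set
$$t_0=\max_j\bigl(d_\omega(\delta(\pi(X_j)))-\omega_j\bigr).$$
Any $a\in A_i$ is the image of a polynomial all of whose monomials have weight $\le i$. Applying the Leibniz rule monomial by monomial and using the semi-degree properties of $d_\omega$ gives $\delta(A_i)\subseteq A_{i+t_0}$. Hence there is a minimal $t$ with $\delta(A_i)\subseteq A_{i+t}$ for all $i$. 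The minimum exists because $\delta\neq 0$: some $a\neq0$ has $\delta(a)\neq 0$, and property (3) of a filtration bounds $t$ from below. With this $t$, $\gr(\delta)$ is a well-defined derivation of $\gr(A)$, and it is locally nilpotent by \cite[Principle 15]{F}.

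The main point is that $\gr(\delta)\neq 0$. By minimality of $t$, there exist $i$ and $a\in A_i$ with $\delta(a)\in A_{i+t}\setminus A_{i+t-1}$. We may take $a\in A_i\setminus A_{i-1}$: if $a\in A_{i-1}$, then $\delta(a)\in A_{i-1+t}$, which contradicts the choice of $a$. So $a+A_{i-1}$ is a nonzero homogeneous element of $\gr(A)$, and its image $\delta(a)+A_{i+t-1}$ is nonzero. Thus $\gr(\delta)$ is a nonzero LND of $\gr(A)$, contradicting rigidity of $\gr(A)$.

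The step needing the most care is checking that $\gr(\delta)$ really is a derivation. This requires the multiplicativity axiom (4) of the filtration, so that $\gr(A)$ is a domain-like graded ring with well-defined products. One then verifies the Leibniz rule on homogeneous classes: for $a\in A_i$ and $b\in A_j$,
$$\delta(ab)=a\delta(b)+b\delta(a)\in A_{i+j+t},$$
and reducing modulo $A_{i+j+t-1}$ gives the Leibniz identity in $\gr(A)$. Axiom (4) for the $\omega$-weight filtration is assumed in the setup, since the paper calls it a filtration. Local nilpotence of $\gr(\delta)$ follows because $\gr(\delta)^m(\rho(a))$ is either $\rho(\delta^m(a))$ or $0$, and $\delta^m(a)=0$ for large $m$.
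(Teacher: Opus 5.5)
Your proof is correct and follows the same route the paper relies on by citing \cite[Corollary 6.3]{CS}: a nonzero LND $\delta$ preserves the filtration, and with the uniform minimal shift $t$ the induced derivation $\gr(\delta)$ is a nonzero LND of $\gr(A)$. One correction: neither the well-definedness of the product in $\gr(A)$ nor your Leibniz check uses axiom (4); both need only $A_iA_j\subseteq A_{i+j}$, which is property (2) of $d_\omega$. You should therefore drop the appeal to (4) and to $\gr(A)$ being ``domain-like'', because (4) fails for the $\omega$-weight filtrations the paper actually applies this to (in Lemma~\ref{main}, $\gr(\widetilde R)$ contains $\bar z\neq 0$ with $\bar z^{r}=0$), and your argument goes through without it.
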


\begin{theorem}\label{homgr} \cite[Proposition 2.2]{DHML}
    Let $A$ be an integral domain over a field $\KK$ with an admissible $\ZZ$-filtration and $\gr(A)$ is the induced $\ZZ$-graded domain, $\delta$ is nonzero LND of $A$. Then $\delta$ induces a nonzero LND $\overline \delta$ of $\gr(A)$ such that $$\rho(\Ker(\delta)) \subseteq \Ker(\overline \delta).$$
\end{theorem}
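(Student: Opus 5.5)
We need to prove the theorem of \cite[Proposition 2.2]{DHML}: given an admissible $\ZZ$-filtration on a domain $A$ with $\gr(A)$ a domain and a nonzero LND $\delta$, we must produce a nonzero LND $\overline\delta$ of $\gr(A)$ with $\rho(\Ker\delta)\subseteq\Ker\overline\delta$. The natural candidate is $\overline\delta:=\gr(\delta)$ as defined above. The plan has three parts: show that $\delta$ preserves the filtration, check the properties of $\gr(\delta)$, and check the kernel inclusion.

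\emph{Step 1: $\delta$ preserves the filtration.} Let $\Gamma=\{g_1,\dots,g_r\}$ be the finite generating set from admissibility. Put
$$t:=\max_j\bigl(d(\delta(g_j))-d(g_j)\bigr),$$
where $d(a)$ denotes the minimal $n$ with $a\in A_n$. Take $a\in A_n$. By admissibility, $a$ is a sum of monomials in $\Gamma$, each lying in $A_n$. By the Leibniz rule, $\delta$ of a monomial $g_{j_1}\cdots g_{j_s}$ is a sum of terms in which exactly one factor $g_{j_k}$ is replaced by $\delta(g_{j_k})$.

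This is the main obstacle. We need the degree of the monomial to equal the sum of the degrees of its factors. Axiom (4) of a filtration gives exactly this: the degree function is additive on products of nonzero elements. Each Leibniz term therefore has degree at most $d(\text{monomial})+t\le n+t$. Hence $\delta(A_n)\subseteq A_{n+t}$. (Alternatively, one may simply cite \cite[Lemma 6.2]{CS}.) We then take $t$ minimal with this property. Since $\delta\neq0$ and $\bigcap_n A_n=\{0\}$, such a minimal $t$ exists.

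\emph{Step 2: properties of $\gr(\delta)$.} Additivity of the degree (axiom (4)) also shows that $\overline\delta=\gr(\delta)$ is well defined and satisfies the Leibniz rule. For the rule, compare $\delta(ab)=a\delta(b)+b\delta(a)$ modulo $A_{i+j+t-1}$.

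For local nilpotence, take a homogeneous $\bar a=\rho(a)$. Then $\overline\delta^{\,k}(\bar a)$ is the class of $\delta^k(a)$ in $A_{i+kt}/A_{i+kt-1}$. This class vanishes once $\delta^k(a)=0$, which happens for large $k$. Linearity extends this to all of $\gr(A)$; this is \cite[Principle 15]{F}.

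Nonzeroness follows from the minimality of $t$. By minimality, some $a\in A_i$ has $\delta(a)\notin A_{i+t-1}$. Since $A_{i-1}$ maps into $A_{i+t-1}$, we get $a\notin A_{i-1}$. Therefore $\overline\delta(\rho(a))=\delta(a)+A_{i+t-1}\neq0$.

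\emph{Step 3: the kernel inclusion.} Let $b\in\Ker\delta$ be nonzero with $b\in A_i\setminus A_{i-1}$. Then
$$\overline\delta(\rho(b))=\delta(b)+A_{i+t-1}=0,$$
so $\rho(b)\in\Ker\overline\delta$. Also $\rho(0)=0$. This gives $\rho(\Ker\delta)\subseteq\Ker\overline\delta$. The only delicate point in the whole argument is the degree bookkeeping in Step 1, which is where admissibility and axiom (4) are used.
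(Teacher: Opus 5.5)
Your proof is correct and is essentially the standard argument behind the cited result; the paper gives no proof of its own, only the reference to Derksen--Hadas--Makar-Limanov. There, as in your proof, admissibility together with additivity of the degree (axiom (4)) gives a uniform shift $\delta(A_n)\subseteq A_{n+t}$, and $\overline\delta=\gr(\delta)$ with $t$ minimal is then a nonzero homogeneous LND satisfying $\rho(\Ker\delta)\subseteq\Ker\overline\delta$. Two cosmetic points: well-definedness of $\gr(\delta)$ on $A_i/A_{i-1}$ comes from $\delta(A_{i-1})\subseteq A_{i+t-1}$, not from axiom (4), and generators with $\delta(g_j)=0$ should be left out of the maximum that defines $t$.
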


\section{Groebner bases}
Let us briefly recall definition and basic facts on the field of Groebner bases. For more details, see~\cite{CLSh}.

\begin{definition}
    A {\it monomial order} on $\KK[X_1, \ldots, X_n]$ is a total order $<$ on $\ZZ^n_{\geq 0}$ such that for any $\alpha, \beta, \gamma \in \ZZ^n_{\geq 0}$ the relation $\alpha < \beta$ implies $a + \gamma < \beta + \gamma$.
\end{definition}

\begin{definition}
    A {\it lexicographic order} is a monomial ordering such that $\alpha < \beta$ if and only if the left-most nonzero coordinate of a difference $\alpha - \beta$ is positive. We use the notation: $\alpha <_{lex} \beta$.
\end{definition}

\begin{definition}
    {\it $\omega$-graded lexicographic order} is a monomial order such that $\alpha < \beta$ if and only if $\sum\omega_i \alpha_i < \sum\omega_i \beta_i$ or $\sum\omega_i \alpha_i < \sum\omega_i \beta_i$ and $\alpha <_{lex} \beta$. We use the notation: $\alpha <_{\omega-grlex} \beta$.
\end{definition}
Set a tuple $\alpha := (\alpha_1, \ldots, \alpha_n) \in \ZZ^n_{\geq 0}$. Define $X^{\alpha} := X^{\alpha_1} \ldots X^{\alpha_n}$ and let $F = \sum a_{\alpha}X^{\alpha}$ be a nonzero polynomial. Set a monomial ordering $<$ and put
$$\mathrm{multideg}(F) := \max\{\alpha \; | \; a_{\alpha \neq 0}\},$$
where the maximum is taken with respect to the order $<$. The {\it leading term} of a polynomial $F$ is $\mathrm{LT}(F) := a_{\mathrm{multideg}(F)}X^{\mathrm{multideg}(F)}$. The {\it leading monomial} of a polynomial $F$ is $\mathrm{LM}(F) := X^{\mathrm{multideg}(F)}$.

\begin{definition}
    The $S$-polynomial of $F_i$, $F_j \in \mathcal{F}$ is a polynomial of the following form:
    $$S(F_i, F_j) := \frac{m}{\LT(F_i)}F_i - \frac{m}{\LT(F_j)}F_j,$$
    where $m :=$ LCM$(\LM(F_i), \LM(F_j))$.
\end{definition}

\begin{definition}
    Let $\mathcal{F} = \{F_1, \ldots, F_m\}$ be an ordered with respect to monomial ordering $<$ tuple of polynomials in $\KK[X_1, \ldots, X_n]$. Then each polynomial $F \in \KK[X_1, \ldots, X_n]$ can be written as
    $$F = a_1F_1 + \ldots a_mF_m + r,$$
    for $a_i, r \in \KK[X_1, \ldots, X_n]$, and either $r =0$, or $r$ is a linear combination of monomials with coefficients in $\KK^{\times}$, and neither of the coefficients is divisible by $\LT(F_i)$ for all $i$. Then $r$ is called a {\it remainder of a polynomial $F$ on division by $\mathcal{F}$}. See.~\cite[Chapter 2, §3, Theorem 3]{CLSh}.
\end{definition}

\begin{definition}
    A tuple of polynomials $\mathcal{F} = \{F_1, \ldots, F_m\}$, where $F_i \in \KK[X_1, \ldots, X_n]$ is a {\it Groebner system}, if for any $G \in \KK[X_1, \ldots, X_n]$ a remainder of $G$ on division by $\mathcal{F}$ is uniquely defined.
\end{definition}

\begin{definition}
    A tuple of polynomials $\mathcal{F} = \{F_1, \ldots, F_m\}$ is a {\it Groebner basis} of an ideal $I \triangleleft \KK[X_1, \ldots, X_n]$, if $(\mathcal{F}) = I$ and $\mathcal{F}$ is a Groebner system.
\end{definition}

\begin{lemma}\label{s}
    Let $F_1, F_2 \in \KK[X_1,\ldots,X_n]$, and $\overline{F_1}$, $\overline{F_2}$ are relatively prime. Then there is a sequence of elementary divisions of $S$-polynomial $S(F_1, F_2)$ by $\{F_1, F_2\}$ such that the remainder of the sequence is zero.
\end{lemma}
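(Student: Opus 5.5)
Here $\overline{F_i}$ is read as the leading monomial $\LM(F_i)$ with respect to the fixed monomial order; this is the classical Buchberger coprimality criterion. The plan is to write $F_1 = c_1 M_1 + R_1$ and $F_2 = c_2 M_2 + R_2$, where $M_i = \LM(F_i)$, $c_i\in\KK^\times$, and every monomial of $R_i$ is strictly smaller than $M_i$. After normalizing $c_1=c_2=1$ (scaling changes neither the $S$-polynomial up to a constant nor the divisibility pattern), coprimality gives $m = \mathrm{LCM}(M_1,M_2) = M_1M_2$. Hence
$$S(F_1,F_2) = M_2F_1 - M_1F_2 = M_2R_1 - M_1R_2.$$
Since $M_2 = F_2 - R_2$ and $M_1 = F_1 - R_1$, this rewrites as
$$S(F_1,F_2) = (F_2-R_2)R_1 - (F_1-R_1)R_2 = R_1F_2 - R_2F_1.$$

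The key step is to show that this identity can be realised as a sequence of elementary division steps, each cancelling the current leading term by subtracting a monomial multiple of $F_1$ or $F_2$, and ending at zero. I would use the standard lemma from \cite{CLSh}: if $G = \sum a_iF_i$ with $\mathrm{multideg}(a_iF_i)\le \mathrm{multideg}(G)$ for all $i$, then division of $G$ by $\{F_1,F_2\}$ can be carried out with zero remainder. It therefore suffices to check the degree bound
$$\mathrm{multideg}(R_1F_2),\ \mathrm{multideg}(R_2F_1)\ \le\ \mathrm{multideg}(S).$$
We have $\LM(R_1F_2) = \LM(R_1)M_2$ and $\LM(R_2F_1)=\LM(R_2)M_1$. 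These two monomials cannot be equal: $\LM(R_1)M_2=\LM(R_2)M_1$ together with $\gcd(M_1,M_2)=1$ forces $M_1\mid \LM(R_1)$, which is impossible because $\LM(R_1)<M_1$ and a proper multiple of $M_1$ would be $\ge M_1$ in a monomial order. So no cancellation occurs between the leading terms, and $\mathrm{multideg}(S)$ is the maximum of the two, which gives the bound. The degenerate cases $R_1=0$ or $R_2=0$ are trivial.

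To make the division sequence explicit rather than citing the lemma, I would proceed inductively. At each stage the current polynomial has the form $aF_2 - bF_1$ with $a\le R_1$ and $b\le R_2$ (subpolynomials obtained by deleting terms). By the same non-cancellation argument, its leading term is either $\LT(a)M_2$ or $\LT(b)M_1$. Subtracting $\LT(a)F_2$, or adding $\LT(b)F_1$, is an elementary division step that removes one term from $a$ or from $b$. After $|M(R_1)|+|M(R_2)|$ steps the polynomial is zero.

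The main obstacle is exactly the non-cancellation claim at every intermediate stage, namely that $\LM(a)M_2\neq \LM(b)M_1$. This requires the coprimality argument once more. It relies on the fact that a monomial order satisfies $\gamma\ge 0$ for every exponent vector $\gamma$, so that divisibility implies $\ge$. That property follows from well-ordering, and I would state it explicitly.
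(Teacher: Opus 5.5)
The paper states this lemma without proof; it is the classical coprime-leading-monomial criterion of \cite{CLSh}. Your reading of $\overline{F_i}$ as $\LM(F_i)$ is the right one. Under the other possible reading, the top $\omega$-components (which the invocation in Lemma~\ref{main} might suggest), the statement is false: with $F_1=X_1+X_2$, $F_2=X_1-X_2$, $\omega=(1,1)$ and $X_1>X_2$, one gets $S(F_1,F_2)=2X_2$, which no division step can touch. In both applications in Lemma~\ref{main} the leading monomials themselves ($Z^r$ and $Y_1^{k_{21}}\cdots Y_m^{k_{2m}}$, resp.\ $Z^r$ and $X_1^s$) are coprime, so your reading covers what the paper needs.

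Your explicit induction is a complete and correct proof. The following pieces are all sound:
\begin{itemize}
\item the invariant $P=aF_2-bF_1$, with $a,b$ obtained from $R_1,R_2$ by deleting terms;
\item the non-collision $\LM(a)M_2\neq\LM(b)M_1$, which follows from $\gcd(M_1,M_2)=1$ and $u\ge 1$ for every monomial $u$;
\item termination after $|M(R_1)|+|M(R_2)|$ steps.
\end{itemize}
The same coprimality argument shows a bit more: at every stage $\LT(P)$ is divisible by exactly one of $M_1,M_2$ (e.g.\ $M_1\mid\LM(a)M_2$ would force $M_1\mid\LM(a)$). So the division algorithm has no choices and returns remainder $0$ for either ordering of $\{F_1,F_2\}$, which is the form in which Theorem~\ref{Buh} is phrased.

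You should drop the first route. The statement you attribute to \cite{CLSh} is false in general: namely, that a representation $G=\sum a_iF_i$ with $\mathrm{multideg}(a_iF_i)\le\mathrm{multideg}(G)$ lets one divide $G$ to remainder zero. \cite{CLSh} proves only the converse: zero remainder implies such a representation.

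For a counterexample, work in $\KK[x,y]$ with grlex and $x>y$, and take $F_1=x^2$, $F_2=xy+y^2$, $G=yF_1+xF_2=2x^2y+xy^2$. This representation satisfies the degree bound, since both summands have leading monomial $x^2y=\LM(G)$. Yet every sequence of term-cancelling division steps ends at $\pm y^3$:
\begin{itemize}
\item the term $2x^2y$ is either deleted by $2yF_1$ or turned into $-2xy^2$ by $2xF_2$;
\item a term $c\,xy^2$ can only become $-c\,y^3$;
\item $y^3$ is irreducible.
\end{itemize}

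So the degree bound by itself says nothing about division. What makes the reduction work is the special structure your induction exploits: each step deletes one term of $R_1$ or $R_2$, and the two candidate leading monomials never collide. Rely on that argument alone and the proof is complete.
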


\begin{theorem}[Buchberger’s Criterion]\label{Buh}
    Let $\mathcal{F} = \{F_1, \ldots, F_m\}$, where $F_i \in \KK[X_1,\ldots,X_n]$ for all $i$. Then $\mathcal{F}$ is a Groebner system if and only if for any two polynomials $F_i$ and $F_j$ the remainder on elementary division of $S(F_i, F_j)$ by $\mathcal{F}$ is zero.
\end{theorem}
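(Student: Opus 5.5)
We follow the standard argument from~\cite{CLSh}. Its pivot is the equivalence, for $I = (\mathcal{F})$, of two conditions: (a) the remainder on division by $\mathcal{F}$ is unique for every $G$; (b) $\LT(G)$ is divisible by some $\LM(F_i)$ for every nonzero $G\in I$.

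\textbf{Proof of (a)$\Leftrightarrow$(b).} Assume (b). If $G = \sum a_iF_i + r = \sum a_i'F_i + r'$ are two division expressions, then $r - r' \in I$. No monomial of $r - r'$ is divisible by any $\LM(F_i)$, so (b) forces $r - r' = 0$. Conversely, assume (a), and let $G\in I$ be nonzero with $\LT(G)$ divisible by no $\LM(F_i)$. We exhibit two different division sequences for a suitable polynomial. Write $G=\sum h_iF_i$ and choose an index $j$ and a monomial $m$ with $mF_j$ occurring in this expansion. Dividing $mF_j$ by $\mathcal{F}$ once gives remainder $0$. Dividing $mF_j$ instead by the route that reproduces $G$ leaves a remainder containing $\LT(G)$. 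This contradicts uniqueness. In both directions the main point is that $r - r'$ is a $\KK$-combination of non-divisible monomials lying in $I$.

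\textbf{Necessity in the Criterion.} Each $S(F_i,F_j)$ lies in $I$. If $\mathcal{F}$ is a Groebner system, (b) holds, so every step of any division of an element of $I$ can continue until it reaches $0$. Hence the remainder of $S(F_i,F_j)$ is zero.

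\textbf{Sufficiency in the Criterion.} We prove (b). Take $G\in I$ and write $G=\sum h_iF_i$. Among all such representations, choose one minimizing
$$\delta := \max_i \mathrm{multideg}(h_iF_i).$$
This minimum exists because the monomial order is a well-order. If $\delta = \mathrm{multideg}(G)$, then $\LT(G)$ is divisible by some $\LM(F_i)$, and we are done. Otherwise $\delta > \mathrm{multideg}(G)$, so the top-degree terms cancel:
$$\sum_{\mathrm{multideg}(h_iF_i)=\delta}\LT(h_i)F_i$$
has multidegree strictly less than $\delta$.

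\textbf{Key lemma.} A sum $\sum c_i x^{\alpha_i}F_i$ in which all summands have multidegree $\delta$, and which itself has multidegree $<\delta$, is a $\KK$-linear combination of the polynomials $x^{\delta-\gamma_{ij}}S(F_i,F_j)$, where $x^{\gamma_{ij}}=\mathrm{LCM}(\LM(F_i),\LM(F_j))$. Each of these has multidegree $<\delta$. The lemma follows from a telescoping rewriting of the sum, using that the coefficients of the leading terms sum to zero.

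\textbf{Lowering $\delta$.} By hypothesis, each $S(F_i,F_j)$ reduces to $0$, so it can be written as $\sum b_kF_k$ with $\mathrm{multideg}(b_kF_k)\le \mathrm{multideg}(S(F_i,F_j))$. This is the standard property of the division algorithm. Substituting these expressions into the top-degree part produces a new representation of $G$ with strictly smaller $\delta$, which contradicts minimality.

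\textbf{Main obstacle.} The hardest steps are the key lemma and keeping track of multidegrees under substitution. I would do the telescoping explicitly: normalize each $F_i$ to be monic in its leading term, set $p_i = x^{\alpha_i}F_i$, and write
$$\sum c_ip_i=\sum_{i}(c_1+\dots+c_i)(p_i-p_{i+1}),$$
noting that each $p_i-p_{i+1}$ is a monomial multiple of an $S$-polynomial. Lemma~\ref{s} is not needed here; it only serves later to shortcut pairs with coprime leading terms.
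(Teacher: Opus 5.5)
The paper does not prove this statement; it recalls it as a standard fact from \cite{CLSh}. Your sufficiency argument is the standard proof from there and is correct: you take a representation of $G$ minimizing $\delta$, use the telescoping lemma to express a top-degree cancellation through the $x^{\delta-\gamma_{ij}}S(F_i,F_j)$, and lower $\delta$ using the bounds $\mathrm{multideg}(b_kF_k)\le\mathrm{multideg}(S(F_i,F_j))$ that a reduction to zero provides. Your proof of (b)$\Rightarrow$(a) is also correct.

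\textbf{The gap is in (a)$\Rightarrow$(b), on which your proof of necessity rests.} A remainder of $mF_j$ is just some $r'$ with $mF_j-r'\in I$ made of non-divisible monomials. No division of $mF_j$ ``reproduces $G$'', and nothing forces a remainder of $mF_j$ to contain $\LT(G)$. So no violation of uniqueness is produced.

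\textbf{The repair.} Divide $G$ itself, and write $G=\sum c_iF_i+r$ for the output of the division algorithm. Since no $\LM(F_i)$ divides $\LT(G)$, the term $\LT(G)$ is moved to the remainder at the first step, and every later step moves strictly smaller terms. Hence $\LT(r)=\LT(G)$, so $r$ is a nonzero element of $I$ none of whose monomials is divisible by any $\LM(F_i)$. How you finish depends on the definition of remainder.
\begin{itemize}
\item With the paper's definition, which puts no bound on the $a_iF_i$, the expression $G=\sum h_iF_i+0$ shows that $0$ is another remainder of $G$. This contradicts (a). Under this definition necessity is even immediate without (b): $S(F_i,F_j)\in I$ has $0$ as a remainder, so by uniqueness every remainder obtained from it by elementary divisions is $0$.
\item With the division theorem of \cite{CLSh}, which requires $\mathrm{multideg}(a_iF_i)\le\mathrm{multideg}(G)$, $0$ is not an admissible remainder of $G$, because such an expression would make $\LT(G)$ divisible by some $\LM(F_i)$. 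Instead put $\alpha=\max_i\mathrm{multideg}(h_iF_i)$. Then $r=\sum(h_i-c_i)F_i$ with every $\mathrm{multideg}((h_i-c_i)F_i)\le\alpha$. So if $r_0$ is a remainder of $x^\alpha$, then $r_0+r\neq r_0$ is another one, again contradicting (a).
\end{itemize}
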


Fix a tuple $\omega = (\omega_1, \ldots, \omega_n) \in \ZZ^n$ and let $\{A_i\}_{i \in \ZZ}$ be an $\omega$-wight filtration of an algebra $A = \KK[X_1, \ldots, X_n]/I$. Any polynomial $F \in \KK[X_1, \ldots, X_n]$ can be written as a sum of $\deg_{\omega}$-homogeneous elements $F = F_{i_1} + \ldots + F_{i_k}$, where $\deg_{\omega}(F_{i_j}) = i_j$ and $i_1 < \ldots < i_k = \deg_{\omega}(F)$. Denote the leading term in this sum by $\overline F^{\omega} := F_{i_k}$. Using~\cite[Proposition 4.1]{KM}, it is easy to prove the following fact:

\begin{proposition}\label{Grebner}
    Let $A = \KK[X_1, \ldots, X_n]/I$ and $F_1, \ldots, F_m$ be a Groebner basis of an ideal $I$ with respect to a lexicographic order, constructed by $\omega = (\omega_1, \ldots \omega_n)$. Then $$\gr(A) = \KK[X_1, \ldots, X_n]/(\overline{F_1}^\omega, \ldots, \overline{F_m}^\omega).$$
\end{proposition}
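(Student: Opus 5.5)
We identify $\gr(A)$ with a quotient of $\KK[X_1,\ldots,X_n]$ and prove that the kernel of the natural surjection is exactly $(\overline{F_1}^\omega,\ldots,\overline{F_m}^\omega)$. Write $\mathrm{in}_\omega(I)$ for the ideal spanned by all $\overline{F}^\omega$ with $F\in I$. The order is the $\omega$-graded lexicographic order, so a monomial of larger $\omega$-degree is always larger.

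\textbf{Step 1: presentation of $\gr(A)$.} Define $\Phi\colon \KK[X_1,\ldots,X_n]\to \gr(A)$ by sending a $\deg_\omega$-homogeneous polynomial $H$ of degree $d$ to $\pi(H)+A_{d-1}\in A_d/A_{d-1}$, and extend linearly. This is a graded algebra homomorphism, and it is surjective. Indeed, every element of $A_d/A_{d-1}$ is represented by $\pi(F)$ with $\deg_\omega F\le d$. Only the degree-$d$ part of $F$ survives modulo $A_{d-1}$, so the class is $\Phi$ of that part.

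\textbf{Step 2: the kernel is $\mathrm{in}_\omega(I)$.} Since $\Phi$ is graded, its kernel is a homogeneous ideal, so it suffices to check homogeneous $H$ of degree $d$.
\begin{itemize}
\item Suppose $\Phi(H)=0$. Then $\pi(H)\in A_{d-1}$, so there is $G$ with $\deg_\omega G\le d-1$ and $\pi(G)=\pi(H)$. Hence $H-G\in I$ and $\overline{H-G}^\omega=H$.
\item Conversely, let $F\in I$ with $\deg_\omega F=d$. Then $\pi(\overline F^\omega)=-\pi(F-\overline F^\omega)$, and the right-hand side lies in $A_{d-1}$. So $\Phi(\overline F^\omega)=0$.
\end{itemize}
This is essentially the content of \cite[Proposition 4.1]{KM}.

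\textbf{Step 3: Gröbner bases pass to initial forms.} It remains to show $\mathrm{in}_\omega(I)=(\overline{F_1}^\omega,\ldots,\overline{F_m}^\omega)$; this is the main step. The inclusion $\supseteq$ is clear. For $\subseteq$, take $F\in I$ and divide it by the Gröbner basis, $F=\sum a_iF_i$ with remainder $0$. The division algorithm guarantees
\[
\mathrm{multideg}(a_iF_i)\le \mathrm{multideg}(F) \quad\text{for all } i.
\]
Because the order refines $\deg_\omega$, this gives $\deg_\omega(a_iF_i)\le \deg_\omega F$. Take the degree-$d=\deg_\omega F$ components of both sides. The $d$-component of $a_iF_i$ is nonzero only when $\deg_\omega(a_iF_i)=d$, and in that case it equals $\overline{a_i}^\omega\,\overline{F_i}^\omega$. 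Therefore
\[
\overline F^\omega=\sum_{\deg_\omega(a_iF_i)=d}\overline{a_i}^\omega\,\overline{F_i}^\omega\in(\overline{F_1}^\omega,\ldots,\overline{F_m}^\omega).
\]

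The potential obstacle is the degree bound in Step 3. It fails if the order is pure lex and does not respect $\omega$, so we must read "lexicographic order constructed by $\omega$" as the $\omega$-grlex order defined earlier. If some $\omega_i$ are negative, this order is not a well-order. In that case we would first homogenize, or replace the division algorithm by the standard representation property: $I$ has a Gröbner basis, so every $F\in I$ admits $F=\sum a_iF_i$ with $\LM(a_iF_i)\le \LM(F)$. This representation still yields the $\deg_\omega$ bound, and Step 3 goes through unchanged.
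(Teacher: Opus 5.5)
Your proof is correct and follows the route the paper intends. Steps 1--2 are exactly the content of \cite[Proposition 4.1]{KM}, namely $\gr(A)\cong \KK[X_1,\ldots,X_n]/\mathrm{in}_\omega(I)$, which the paper simply cites. Step 3 is the ``easy'' Groebner-basis deduction the paper leaves to the reader, and you are right to read the order as the $\omega$-graded lexicographic one.

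The paper only uses nonnegative weights, and for those this order is a well-order, so the division-algorithm bound applies directly. Your fallback for negative weights is therefore unnecessary. As stated it would also need more care: for non-well-orderings, standard-basis theory only guarantees such representations after multiplying by a unit of the localization.
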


\section{Preliminary lemmas}\label{Prl}

Introduce some notations. Let $A, \hat{A}$ and $B$ denote the following algebras: 
$$A := \KK[Z, Y_1, Y_2, \ldots, Y_m, X_1, X_2],$$
$$\widehat{A} := \KK[Z, Y_1, Y_{1}^{-1}, Y_2, Y_2^{-1}, \ldots, Y_m, Y_m^{-1}],$$
$$B := \frac{A}{(X_1Y_1^{k_{11}}\ldots Y_m^{k_{1m}} - f_1(Z, Y_1, \ldots, Y_m), X_2Y_1^{k_{21}}\ldots Y_m^{k_{2m}} - f_2(Z, Y_1, \ldots, Y_m, X_1))},$$
 where $k_{ij} \in \NN$, a polynomial $f_1(Z, Y_1, \ldots, Y_m) \in \KK[Z, Y_1, \ldots, Y_m]$ is monic in a variable $Z$; polynomial $f_2(Z, Y_1, \ldots, Y_m, X_1) \in \KK[Z, Y_1, \ldots, Y_m, X_1]$ is monic in a variable $X_1$. The letters $z, y_1, \ldots, y_m, x_1, x_2$ will denote the images of $Z, Y_1, \ldots, Y_m, X_1, X_2$ in $B$ respectively.

Set $\deg_Zf_1(Z, Y_1, \ldots, Y_m) := r, \deg_{X_1}f_2(Z, Y_1, \ldots, Y_m, X_1) := s$. We will prove a generalization of~\cite[Theorem 3.10]{GS}.

\begin{lemma}\label{id}
    Let $R$ be an integral domain, $y_1, y_2, \ldots, y_m, f \in R \setminus \{0\}$, and $y_1, y_2, \ldots, y_m$ are not zero-divisors on $R/(f)$. Then $$\frac{R[U]}{y_1y_2 \ldots y_mU - f} \; \; \; \text{is an integral domain}.$$
\end{lemma}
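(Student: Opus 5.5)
The plan is to embed $C := R[U]/(y U - f)$, with $y := y_1 y_2\ldots y_m$, into the localization $R_y = R[y^{-1}]$, which is a domain as a localization of the domain $R$. Consider the homomorphism $\phi\colon R[U]\to R_y$, $U\mapsto f/y$. Its kernel contains $yU-f$, so it induces $\bar\phi\colon C\to R_y$. If $\bar\phi$ is injective, then $C$ is isomorphic to a subring of a domain and hence is a domain. Everything therefore reduces to showing that $\Ker\phi=(yU-f)$.

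First step: reduce the hypothesis to a statement about $y$ alone. Since each $y_i$ is a non-zero-divisor on $R/(f)$, and a product of non-zero-divisors is a non-zero-divisor, $y$ is a non-zero-divisor on $R/(f)$. Concretely, if $yg\in(f)$ then $g\in(f)$. This is proved by induction on the number of factors: $y_1(y_2\cdots y_m g)\in (f)$ gives $y_2\cdots y_m g\in(f)$, and so on.

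Second step: show that any $P(U)\in\Ker\phi$ lies in $(yU-f)$, by induction on $n=\deg_U P$. For $n=0$, $P\in R$ maps to $P$ itself, so $P=0$. For $n\ge 1$, write $P=\sum_{i=0}^n a_iU^i$. From $\sum a_i f^i y^{-i}=0$ in $R_y$, and since $R$ is a domain with $y\neq 0$, we may clear denominators: $\sum a_i f^i y^{n-i}=0$ in $R$. Reducing modulo $y$ gives $a_n f^n\in (y)$.

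This is the main obstacle: we need $a_n\in(y)$, but the hypothesis is stated the other way around, namely that $y$ is a non-zero-divisor modulo $f$, not that $f$ is a non-zero-divisor modulo $y$. The way around it is to avoid the leading coefficient and divide directly by $yU-f$ over $R$. Using the relation $f\equiv yU$ modulo $(yU-f)$, first multiply $P$ by $y^n$ and rewrite it as
$$y^nP(U)\equiv \sum_i a_i y^{n-i}(yU)^i\equiv \sum_i a_i y^{n-i} f^{i}\equiv 0 \pmod{yU-f},$$
where the last congruence uses the relation obtained in the previous step. So $y^nP\in(yU-f)$, and it remains to cancel $y$.

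Third step: show that $y$ is a non-zero-divisor on $C$, i.e.\ $yQ\in(yU-f)$ implies $Q\in(yU-f)$. Write $yQ=(yU-f)H$ with $H\in R[U]$. Reducing modulo $y$ gives $fH\equiv 0$ in $(R/(y))[U]$, so $fH$ has all its coefficients in $(y)$. Now use the hypothesis in the correct direction. Compare coefficients of $yQ=(yU-f)H$, with $H=\sum h_jU^j$ and $Q=\sum q_jU^j$: this gives $y q_j = y h_{j-1} - f h_j$, so $f h_j\in (y)$ for every $j$. We must conclude $h_j\in(y)$. Going from $fh\in(y)$ to $h\in(y)$ requires that $f$ be a non-zero-divisor modulo $y$, which is the symmetric condition.

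Since $(y,f)$ being a regular sequence in this sense is symmetric only under extra hypotheses (for example Noetherian local/graded rings), I would instead argue via the top coefficient. Take $j$ maximal with $h_j\ne0$. The equation at degree $j+1$ gives $yq_{j+1}=yh_j$, so $h_j=q_{j+1}$; this does not immediately give $h_j\in(y)$. I therefore expect the cleanest route is to verify $y\nmid$ issues directly: induct on $\deg Q$, using the coefficient equations and the hypothesis that $y g\in (f)\Rightarrow g\in(f)$, applied after reducing modulo $f$ rather than modulo $y$. Reducing $yQ=(yU-f)H$ modulo $f$ gives $y(Q-UH)\equiv 0$ modulo $f$, hence $Q-UH\in fR[U]$ by the first step. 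Write $Q=UH+fG$. Substituting back, $y(UH+fG)=(yU-f)H$ gives $yfG=-fH$, so $H=-yG$ since $R$ is a domain. Then $Q=-yUG+fG=-(yU-f)G\in(yU-f)$.

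This settles the third step cleanly, using only the stated hypothesis and the fact that $R$ is a domain. Applying it $n$ times to $y^nP\in(yU-f)$ gives $P\in(yU-f)$. Hence $\bar\phi$ is injective, and $C$ is an integral domain.
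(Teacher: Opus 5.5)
Your argument is correct and follows essentially the paper's route: show that $y=y_1\cdots y_m$ is a non-zero-divisor on $R[U]/(yU-f)$, then embed this ring into the domain $R[y^{-1}]$. The only difference is how you get the non-zero-divisor property: you reduce modulo $f$ and cancel $f$, whereas the paper reduces modulo $y_i$, using that $f$ is a non-zero-divisor on $R/(y_i)$ (\cite[Lemma~3.1]{GS}). Your side remark that this symmetry needs extra hypotheses is mistaken. In a domain, $fh=yk$ gives $k=fk'$ by hypothesis and then $h=yk'$, which is the same cancellation you use. Nothing in your proof depends on that remark.
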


\begin{proof}
    It is easy to see that $y_1, \ldots, y_m$ are not zero-divisors on $R[U]/(y_1 \ldots y_mU - f)$, since $y_1, \ldots, y_m$ are not zero-divisors on $R/(f)$. Indeed, assume that $y_i \beta = (y_1 \ldots y_mU - f) \alpha$ for some $\alpha, \beta \in R$. Thus $y_i(y_1 \ldots y_{i-1}y_{i+1}\ldots y_mU - \beta) = f \alpha$. Since $y_i$ is not a zero-divisor on $R/(f)$ and $R$ is an integral domain, then $f$ is not a zero-divisor on $R/(y_i)$ and on $R[U]/(y_i)$, see~\cite[Lemma~3.1]{GS}. Therefore, $\alpha \in y_iR[U]$, hence $y_1 \ldots y_mU - f$ is not a zero-divisor on $R[U]/(y_i)$. It follows that $y_i$ is not a zero-divisor on $R[U]/(y_1 \ldots y_mU - f)$.
    
    We obtain an embedding:
    $$R[U]/(y_1 \ldots y_mU - f) \hookrightarrow \frac{R[U][y_1^{-1}]\ldots[y_m^{-1}]}{(y_1 \ldots y_mU - f)} = R[y_1^{-1}]\ldots[y_m^{-1}].$$
    Hence, $R[U]/(y_1 \ldots y_mU - f)$ is an integral domain.
\end{proof}

\begin{lemma}\label{id2}
    $B$ is an integral domain.
\end{lemma}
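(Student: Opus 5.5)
The plan is to apply Lemma~\ref{id} twice, once for each equation. In Lemma~\ref{id} the elements $y_1,\dots,y_m$ need not be distinct. So a monomial $y_1^{k_{i1}}\cdots y_m^{k_{im}}$ can be treated as a product of $k_{i1}+\dots+k_{im}$ factors, each equal to some $y_j$. The hypothesis then only requires that each $y_j$ be a non-zero-divisor modulo the right-hand side.

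\textbf{First step.} Take $R=\KK[Z,Y_1,\dots,Y_m]$ and $f=f_1$. Since $f_1$ is monic in $Z$ of degree $r$, the ring $\KK[Z,Y_1,\dots,Y_m]/(f_1)$ is a free $\KK[Y_1,\dots,Y_m]$-module with basis $1,Z,\dots,Z^{r-1}$. Multiplication by $Y_j$ acts coordinatewise on this basis, so it is injective. Hence each $Y_j$ is a non-zero-divisor on $R/(f_1)$, and Lemma~\ref{id} shows that
$$B_1:=\KK[Z,Y_1,\dots,Y_m,X_1]/(X_1Y_1^{k_{11}}\cdots Y_m^{k_{1m}}-f_1)$$
is an integral domain.

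\textbf{Second step.} Take $R=B_1$ and $f=f_2(z,y_1,\dots,y_m,x_1)$. Then $B=R[X_2]/(y_1^{k_{21}}\cdots y_m^{k_{2m}}X_2-f)$, so it suffices to check that each $y_j$ is a non-zero-divisor on $B_1/(f_2)$. Here $x_1$ is no longer a free variable, so we cannot argue directly. Instead we use the symmetry from \cite[Lemma~3.1]{GS}, already invoked in the proof of Lemma~\ref{id}: since $B_1$ is a domain, $y_j$ is regular on $B_1/(f_2)$ if and only if $f_2$ is regular on $B_1/(y_j)$.

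Now compute $B_1/(y_j)$. We have $k_{1j}\ge 1$ (the $k_{ij}$ are positive naturals), so setting $Y_j=0$ kills the monomial $X_1Y^{k_1}$. This gives
$$B_1/(y_j)\cong \KK[Z,Y_1,\dots,\widehat{Y_j},\dots,Y_m,X_1]\big/\big(f_1|_{Y_j=0}\big)\cong C[X_1],$$
where $C=\KK[Z,Y_1,\dots,\widehat{Y_j},\dots,Y_m]/(f_1|_{Y_j=0})$. Here $X_1$ remains a free variable over $C$. The image of $f_2$ in $C[X_1]$ is monic in $X_1$, because $f_2$ is monic in $X_1$. A monic polynomial is a non-zero-divisor in a polynomial ring over any commutative ring: compare leading coefficients of a hypothetical product equal to zero. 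Therefore $f_2$ is regular on $B_1/(y_j)$, hence $y_j$ is regular on $B_1/(f_2)$.

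Finally, $y_j\neq 0$ and $f_2\neq 0$ in $B_1$, since $B_1/(y_j)\neq 0$ and $f_2$ is monic there. Lemma~\ref{id} then gives that $B$ is an integral domain.

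\textbf{Main obstacle.} The delicate point is the second step: verifying the non-zero-divisor condition over the non-polynomial ring $B_1$. The key is to pass to $B_1/(y_j)$, where the relation degenerates and $X_1$ becomes a free variable.
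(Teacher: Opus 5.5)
Your proof is correct and follows essentially the same route as the paper. You first show that $\KK[Z,Y_1,\dots,Y_m,X_1]/(X_1Y_1^{k_{11}}\cdots Y_m^{k_{1m}}-f_1)$ is a domain, making explicit via the free-module argument and Lemma~\ref{id} what the paper delegates to \cite[Lemma~3.3]{GS}; then you use \cite[Lemma~3.1]{GS} to trade regularity of $y_j$ modulo $f_2$ for regularity of the monic $f_2$ in $B_1/(y_j)\cong C[X_1]$, and conclude with Lemma~\ref{id}. One tiny slip: $B_1/(y_j)\neq 0$ does not by itself give $y_j\neq 0$ in your ring $B_1$, but this follows at once from the embedding $B_1\hookrightarrow\KK[Z,Y_1^{\pm1},\dots,Y_m^{\pm1}]$ produced by your first step.
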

\begin{proof}
    Set $$R := \frac{\KK[Z, Y_1, Y_2, \ldots, Y_m, X_1]}{(X_1Y_1^{k_{11}}\ldots Y_m^{k_{1m}} - f_1(Z, Y_1, \ldots, Y_m))}.$$ Then in the same manner as in ~\cite[Lemma 3.3]{GS} we get the fact that $R$ is an integral domain. We can identify $R$ as a subring of $B$ by identifying the images of $Z, Y_1, \ldots, Y_m, X_1$ in $R$ with $z, y_1, \ldots, y_m, x_1$ in $B$. Then $$B = R[X_2]/(X_2y_1^{k_{21}}\ldots y_m^{k_{2m}} - f_2(z, y_1, \ldots, y_m, x_1)).$$
Moreover, $R/(y_1) \simeq \left(\frac{\KK[Z, Y_2, \ldots, Y_m]}{f_1(Z, 0, Y_2, \ldots, Y_m)}\right)[X_1]$. Therefore, since a polynomial $f_2$ is monic in $X_1$, it follows that its image in $R/(y_1)$ is not a zero-divisor. Hence, $y_1^{k_{21}}$ is not a zero-divisor on $R/(f_2(z, y_1, \ldots, y_m, x_1))$ by~\cite[Lemma~3.1]{GS}. Proceeding in the same manner, we obtain the fact that $y_2^{k_{22}}, \ldots, y_m^{k_{2m}}$ are not zero-divisors on $R/(f_2(z, y_1, \ldots, y_m, x_1))$. Hence, $B$ is an integral domain by Lemma~\ref{id}.
\end{proof}

\section{The Makar-Limanov invariant of the algebra B}\label{sML}

\begin{lemma}\label{l1}
    Put $D(1) := B$ and
     $$D(j) := \frac{A}{(X_1Y_1^{k_{11}}\ldots Y_m^{k_{1m}} - f_1(Z, 0, \ldots, 0, Y_{j}, \ldots, Y_m), X_2Y_1^{k_{21}}\ldots Y_m^{k_{2m}} - X_1^s)}, \;\; \text{if} \;\; j > 1.$$
     Consider $D(j)$, for every $j$, as a subalgebra of $\ZZ^m$-graded algebra $\hat{A} = \bigoplus_{i\in \ZZ^m}\KK[z]y^i$, where $y^i = y_1^{i_1}\cdots y_m^{i_m}$ and $i_k \in \ZZ$.

    Define a $\ZZ-$filtration $\{D(j)_n\}$ on $D(j)$ as follows:
    $$D(j)_n := D(j) \cap (\oplus_{i \geq -n}\KK[z, y_1, y_1^{-1}, \ldots, y_{j-1}, y^{-1}_{j-1}, y_{j+1}, y^{-1}_{j+1}, \ldots y_m, y_m^{-1}]y_j^i).$$
     
    This filtration is admissible and the corresponding graded ring $\gr(D(j)) \simeq D(j+1)$.
\end{lemma}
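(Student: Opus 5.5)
The plan is to realize the filtration on $D(j)$ as a weight filtration and then apply Proposition~\ref{Grebner}. The filtration is the $y_j$-order filtration: an element lies in $D(j)_n$ iff its expansion in $\hat A$ has no power of $y_j$ below $-n$. The natural weight is therefore $\omega$ with $\omega(Y_j)=-1$, $\omega(Z)=\omega(Y_i)=0$ for $i\ne j$, $\omega(X_1)=k_{1j}$ and $\omega(X_2)=k_{2j}$. This makes $x_1,x_2$, viewed inside $\hat A$ via $x_1=f_1/(y_1^{k_{11}}\cdots y_m^{k_{1m}})$ and $x_2=f_2/(y_1^{k_{21}}\cdots)$, have $y_j$-order consistent with $\omega$.

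\textbf{Step 1: the filtration is the $\omega$-weight filtration.} By Lemma~\ref{id2} (and its analogue for $D(j)$, whose relations are also monic in $Z$, respectively $X_1$), $D(j)$ is a domain embedded in $\hat A$. The $y_j$-order on $\hat A$ is a valuation, so $\{D(j)_n\}$ satisfies axioms (1)--(4). To show admissibility with $\Gamma=\{z,y_1,\dots,y_m,x_1,x_2\}$, use that every element of $D(j)$ has a unique normal form modulo the Groebner basis $F_1=X_1Y^{k_1}-f_1$, $F_2=X_2Y^{k_2}-f_2$. Choose a lex order with $X_2>X_1>Z>Y$'s, refined by $\omega$. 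The leading monomials $X_1Y^{k_1}$ and $X_2Y^{k_2}$ are not coprime, so Lemma~\ref{s} does not apply directly. I therefore compute $S(F_1,F_2)$ directly and reduce it using monicity of $f_1$ in $Z$ and of $f_2$ in $X_1$. If needed, I take the leading terms to be $Z^r$ and $X_1^s$ instead, which are coprime, so that Lemma~\ref{s} and Theorem~\ref{Buh} give a Groebner basis immediately.

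Next, in normal form the monomials have pairwise distinct leading $y_j$-exponents, or at least distinct $\omega$-leading parts. Hence the $y_j$-order of an element equals the minimum of the $\omega$-weights of its monomials. This identifies $D(j)_n$ with $\{d_\omega\le n\}$ and yields admissibility.

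\textbf{Step 2: compute $\gr$.} By Proposition~\ref{Grebner}, $\gr(D(j))=A/(\overline{F_1}^\omega,\overline{F_2}^\omega)$. Now $X_1Y^{k_1}$ is $\omega$-homogeneous of weight $0$. The top-weight part of $f_1(Z,0,\dots,0,Y_j,\dots,Y_m)$ is its part with $Y_j$-degree $0$, namely $f_1(Z,0,\dots,0,Y_{j+1},\dots,Y_m)$, which is nonzero since $f_1$ is monic in $Z$. So $\overline{F_1}^\omega=X_1Y^{k_1}-f_1(Z,0,\dots,0,Y_{j+1},\dots)$.

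For $F_2$, first take $j=1$. Here $f_2(Z,Y,X_1)=X_1^s+\sum_{l<s} c_l X_1^l$, and $\omega(X_1^s)=sk_{11}$ exceeds the weight of every other term containing $X_1^l$ with $l<s$, even when the coefficients $c_l$ carry $Y_1$-powers (which lower the weight). Hence $\overline{F_2}^\omega=X_2Y^{k_2}-X_1^s$; this uses $k_{11}>0$. When $j>1$ the relation is already $X_2Y^{k_2}-X_1^s$, which is homogeneous. In both cases the quotient is the defining presentation of $D(j+1)$.

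\textbf{Main obstacle.} The hard step is the Groebner basis verification together with the identification of the filtration with $d_\omega$ in Step 1. The danger is cancellation of lowest $y_j$-order terms between different monomials, which would make $d_\omega$ strictly larger than the true order. I would address this by showing that the candidate quotient $A/(\overline{F_1}^\omega,\overline{F_2}^\omega)$ is a domain, by the argument of Lemmas~\ref{id} and~\ref{id2} with the same monicity. Once that holds, leading forms multiply without vanishing, which gives equality of the two filtrations.
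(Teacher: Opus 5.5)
\textbf{Verdict.} Your route is genuinely different from the paper's, but as written it has two problems: a wrong weight that breaks Step~2, and a missing argument for the key identification of filtrations.

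\textbf{The weight of $X_2$ is wrong.} Since $f_1$ is monic in $Z$, it has $y_j$-order $0$, so $x_1$ has $y_j$-order $-k_{1j}$. Since $f_2$ is monic in $X_1$ and $k_{1j}>0$, $f_2(z,y,x_1)$ (resp.\ $x_1^s$) has $y_j$-order $-sk_{1j}$. Hence $x_2$ has $y_j$-order $-(sk_{1j}+k_{2j})$. This is exactly what the paper records: $x_2\in D(j)_{k_{1j}s+k_{2j}}\setminus D(j)_{k_{1j}s+k_{2j}-1}$. With your $\omega(X_2)=k_{2j}$ you get $\omega(X_2Y_1^{k_{21}}\cdots Y_m^{k_{2m}})=0<sk_{1j}=\omega(X_1^s)$. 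So $X_2Y_1^{k_{21}}\cdots Y_m^{k_{2m}}-X_1^s$ is not $\omega$-homogeneous, contrary to what you assert. Then $\overline{F_2}^\omega=-X_1^s$, and your recipe returns $A/(\overline{F_1}^\omega,X_1^s)$ instead of $D(j+1)$. Your $d_\omega$ would already disagree with the $y_j$-order on the generator $x_2$. You need $\omega(X_2)=sk_{1j}+k_{2j}$. Note also that with $\omega(Y_j)<0$ no order refining $\omega$ is a well-order, so Lemma~\ref{s} and Theorem~\ref{Buh} cannot simply be quoted.

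\textbf{The identification of filtrations is not proved.} Both admissibility and your use of Proposition~\ref{Grebner} rest on identifying $\{D(j)_n\}$ with the $\omega$-weight filtration, and your proposed remedy does not deliver this. One always has $-\mathrm{ord}_{y_j}\le d_\omega$. Primeness of $(\overline{F_1}^\omega,\overline{F_2}^\omega)$ only makes $d_\omega$ multiplicative, which is compatible with strict inequality. For example, $\KK[y,z+yt,z]\subset\KK[y,z,t]$ is a polynomial ring on these generators, with correct weights $-1,0,0$. Yet $(z+yt)-z=yt$ has $-\mathrm{ord}_y=-1<0=d_\omega$.

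\textbf{What the paper does instead.} The missing ingredient is the dimension count, which the paper uses in place of any Groebner computation. The paper reads off the degrees of the generators and gets admissibility from the normal form (\ref{el1}). It then checks that both defining relations of $D(j+1)$ hold in $\gr(D(j))$, so $\gr(D(j))$ is a quotient of $D(j+1)$. It concludes because $\tilde z,\tilde y_1,\ldots,\tilde y_m$ are algebraically independent in $\gr(\hat A)\cong\hat A$, while $D(j+1)$ is a domain of dimension $m+1$ (Lemma~\ref{id2}).

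\textbf{How to repair your version.} With the corrected weights, apply the same count to the composite $A/(\overline{F_1}^\omega,\overline{F_2}^\omega)\twoheadrightarrow\gr^\omega(D(j))\to\gr(D(j))\subseteq\gr(\hat A)$. The count shows this composite is injective. Injectivity of $\gr^\omega(D(j))\to\gr(D(j))$ then forces $d_\omega=-\mathrm{ord}_{y_j}$: an element with $d_\omega(a)=n>-\mathrm{ord}_{y_j}(a)$ would give a nonzero class of degree $n$ mapping to $0$. This yields admissibility (without needing a normal form like (\ref{el1})) and $\gr(D(j))\cong D(j+1)$ at once, and makes the Groebner basis verification unnecessary.
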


\begin{remark}
     In this lemma and in the following proof we assume that $z, y_1, \ldots, y_m, x_1, x_2$ denote for every $D(j)$  the images of $Z, Y_1, \ldots, Y_m, X_1, X_2$ in $D(j)$ respectively.
\end{remark}

\begin{proof}[Proof of Lemma~\ref{l1}]
    Note that $y_j \in D(j)_{-1} \setminus D(j)_{-2}$, $z, y_1, \ldots, y_{j-1}, y_{j+1}, \ldots, y_m \in  D(j)_{0} \setminus D(j)_{-1}$ and $ x_1 \in D(j)_{k_{1j}} \setminus D(j)_{k_{1j}-1}, x_2 \in D(j)_{k_{1j}s+k_{2j}} \setminus D(j)_{k_{1j}s+k_{2j}-1}$.
    Using the relations 
    $$x_1y_1^{k_{11}}\ldots y_m^{k_{1m}} = f_1(z, 0, \ldots, 0, y_{j}, \ldots, y_m)$$ $$\text{and} \;\;\; x_2y_1^{k_{21}}\ldots y_m^{k_{2m}} = f_2(z, y_1, \ldots, y_m, x_1), \; \text{if} \; D(j) = B,$$ $$x_2y_1^{k_{21}}y_2^{k_{22}}\ldots y_m^{k_{2m}} = x_1^s, \; \text{if} \; D(j) \neq B,$$ one can see that each element $g \in D(j)$ can be written as
    \begin{multline}\label{el1}
        g = f_0(y_1,\ldots y_m, z) + \sum_{\substack{i<k_{1j}, \\t > 0, \\ l_a}}b_{itl_a}(z)y_j^ix_1^ty_1^{l_1}\ldots y_{j-1}^{l_{j-1}}y_{j+1}^{l_{j+1}}\ldots y_m^{l_m} + \\ + \sum_{\substack{i \geq k_{1j}, \\t > 0, \\ l_a}}\hat{b}_{itl_a}(z)y_j^ix_1^ty_1^{l_1}\ldots y_{j-1}^{l_{j-1}}y_{j+1}^{l_{j+1}}\ldots y_m^{l_m} + \sum_{\substack{i<k_{2j}, \\t > 0, \\ l_a}}c_{itl_a}(z)y_j^ix_2^ty_1^{l_1}\ldots y_{j-1}^{l_{j-1}}y_{j+1}^{l_{j+1}}\ldots y_m^{l_m} + \\ +\sum_{\substack{i \geq k_{2j}, \\t > 0, \\ l_a}} \widehat{c}_{itl_a}(z)y_j^ix_2^ty_1^{l_1}\ldots y_{j-1}^{l_{j-1}}y_{j+1}^{l_{j+1}}\ldots y_m^{l_m} + \\+ \sum_{\substack{i < \max(k_{1j},k_{2j}), \\p > 0, t > 0,\\ l_a}} d_{itl_a}(z)y_j^ix_1^px_2^ty_1^{l_1}\ldots y_{j-1}^{l_{j-1}}y_{j+1}^{l_{j+1}}\ldots y_m^{l_m} + \\ + \sum_{\substack{i \geq \max(k_{1j},k_{2j}), \\p > 0, t > 0,\\ l_a}} \widehat{d}_{itl_a}(z)y_j^ix_1^px_2^ty_1^{l_1}\ldots y_{j-1}^{l_{j-1}}y_{j+1}^{l_{j+1}}\ldots y_m^{l_m},
    \end{multline}
    where in the sum with the coefficients $\widehat{b}_{itl_a}$ we sum up over the tuples $l_1,\ldots l_m$ such that there is an index $q$ with the property $l_q < k_{1q}$; in the sum with the coefficients $\hat{c}_{itl_a}$ we sum up over the tuples $l_1,\ldots l_m$ such that there is and index $q$ with the property $l_q < k_{2q}$; and in the sum with the coefficients $\widehat{d}_{itl_a}$ we sum up over the tuples $l_1,\ldots l_m$ such that there is and index $q$ with the property $l_q < \max(k_{1q},k_{2q})$. Moreover, $f_0(y_1,\ldots y_m, z) \in \KK[z, y_1, y_2, \ldots, y_m],$ and $ b_{itl_a}(z), \widehat{b}_{itl_a}(z), c_{itl_a}(z), \widehat{c}_{itl_a}(z), d_{itl_a}(z), \widehat{d}_{itl_a}(z) \in \KK[z]$.
    It follows from~(\ref{el1}), that the filtration defined on $D({j})$ is admissible with the generating set $\Gamma = \{z, y_1, \ldots, y_m, x_1, x_2\}$. Set $\widetilde D(j) := gr(D(j))$. Hence, $\widetilde D(j)$ is generated by $\widetilde z, \widetilde y_1, \ldots, \widetilde y_m, \widetilde x_1, \widetilde x_2$, i.e. by the images of the corresponding elements in $D(j)$.

    We show that $\widetilde D(j) \simeq D(j+1)$. It is easy to see that polynomials $x_1y_1^{k_{11}}y_2^{k_{12}}\ldots y_m^{k_{1m}}$ and $ f_1(z, 0, \ldots, 0, y_{j+1}, \ldots, y_m)$ are the elements of $ \in D(j)_0$. Moreover, the following relation holds: 
    \begin{multline*}
        x_1y_1^{k_{11}}y_2^{k_{12}}\ldots y_m^{k_{1m}} - f_1(z, 0, \ldots, 0, y_{j+1}, \ldots, y_m) = \\ = f_1(z, 0, \ldots, 0, y_j, \ldots, y_m) - f_1(z, 0, \ldots, 0, y_{j}, \ldots, y_m) \in D^{j}_{-1},
    \end{multline*}
    hence, $x_1y_1^{k_{11}}y_2^{k_{12}}\ldots y_m^{k_{1m}} - f_1(z, 0, \ldots, 0, y_{j+1}, \ldots, y_m) = 0 \in \widetilde D(j)$.

    Analogously, $x_2y_1^{k_{21}}y_2^{k_{22}}\ldots y_m^{k_{2m}}, x_1^s \in B_{k_{1j}s}$. If $D(j) = B$, then $x_2y_1^{k_{21}}y_2^{k_{22}}\ldots y_m^{k_{2m}} - x_1^s = f_2(z, y_1, \ldots, y_m, x_1) - x_1^s \in B_{k_{1j}s - 1}$, and $x_2y_1^{k_{21}}y_2^{k_{22}}\ldots y_m^{k_{2m}} - x_1^s = 0$ if $D(j) \neq B$. Thus $\widetilde x_2 \widetilde y_1^{k_{21}} \widetilde y_2^{k_{22}}\ldots \widetilde y_m^{k_{2m}} - \widetilde x_1^s = 0 \in \widetilde D(j)$.

    As $\widetilde D(j)$ can be considered as a subalgebra of $\gr(\widehat A) \simeq \widehat{A}$, we see that the elements $\widetilde z, \widetilde y_1, \ldots, \widetilde y_m$ are algebraically independent over $\KK$, therefore, $\mathrm{dim} \widetilde D(j) \geq m+1$. On the other hand, $\widetilde D(j)$ is a quotient of $D(j+1)$, i.e. $\mathrm{dim} \widetilde D(j) \leq \mathrm{dim} D(j+1)$. But $\mathrm{dim} D(j+1) = m + 1$ since $D(j+1)$ is an integral domain by Lemma~\ref{id2}. Then we have $\widetilde D(j) \simeq D(j+1)$.
\end{proof}

\begin{lemma}\label{l2}
    Let $D(m)$ be as in the previous lemma. Consider $D(m)$ as a subalgebra of the $\NN$-graded algebra $\widehat{A} = \oplus_{i\in \NN}\KK[\widetilde y_1, \widetilde y_{1}^{-1},\widetilde y_2, \widetilde y_2^{-1}, \ldots, \widetilde y_m, \widetilde y_m^{-1}]\widetilde z^i$.\\
    Define a $\ZZ-$filtration $\{D(m)_n\}$ on $D(m)$:
    $$D(m)_n := D(m) \cap (\oplus_{i \leq n}\widehat{A}\widetilde z^i).$$
    This filtration is admissible and the corresponding graded ring $\gr(D(m)) \simeq C$, where
    
    $$C := \frac{A}{(X_1Y_1^{k_{11}}Y_2^{k_{12}}\ldots Y_m^{k_{1m}} - Z^r, X_2Y_1^{k_{21}}Y_2^{k_{22}}\ldots Y_m^{k_{2m}} - X_1^s)}$$
\end{lemma}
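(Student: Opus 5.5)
We follow the scheme of the proof of Lemma~\ref{l1}, now filtering by the degree in $z$. Here $D(m)$ is the algebra with relations $x_1y_1^{k_{11}}\cdots y_m^{k_{1m}} = f_1(z,0,\ldots,0,y_m)$ and $x_2y_1^{k_{21}}\cdots y_m^{k_{2m}} = x_1^s$.

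First we compute the filtration degrees of the generators. In $\widehat A$ the elements $y_1,\ldots,y_m$ have $z$-degree $0$ and $z$ has degree $1$. Since $f_1$ is monic in $Z$ of degree $r$, the element $x_1 = f_1(z,0,\ldots,0,y_m)\,y_1^{-k_{11}}\cdots y_m^{-k_{1m}}$ lies in $D(m)_r\setminus D(m)_{r-1}$, and its leading $z$-component is $z^r y_1^{-k_{11}}\cdots y_m^{-k_{1m}}$. Likewise $x_2 = x_1^s y_1^{-k_{21}}\cdots y_m^{-k_{2m}}$ lies in $D(m)_{rs}\setminus D(m)_{rs-1}$.

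Next we establish admissibility with respect to $\Gamma=\{z,y_1,\ldots,y_m,x_1,x_2\}$. As in~(\ref{el1}), we use the two relations to write every $g\in D(m)$ in a normal form: a sum of monomials $z^a y^l x_1^p x_2^t$ in which the $y$-exponents are reduced. Whenever $y^{k_{1\cdot}}$ divides a monomial containing $x_1$, we replace $x_1y^{k_{1\cdot}}$ by $f_1$; whenever $y^{k_{2\cdot}}$ divides a monomial containing $x_2$, we replace $x_2y^{k_{2\cdot}}$ by $x_1^s$.

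We must then check that no cancellation of top $z$-degree parts occurs among distinct normal monomials. Each normal monomial maps in $\widehat A$ to $z^{a+rp+rst}\,y^{\,l-pk_{1\cdot}-tk_{2\cdot}}$ plus terms of lower $z$-degree. So it suffices to show that distinct normal monomials have distinct leading exponents $(a+rp+rst,\; l-pk_1-tk_2)$, or, where they coincide, that the degree bound still holds monomial-wise. This bookkeeping is the main obstacle. If it turns out awkward, we would instead invoke Proposition~\ref{Grebner}: take the relations as a Groebner basis for a suitable lex order refined by the weight $\omega=(1,0,\ldots,0,r,rs)$. Their $S$-polynomial reduces to zero by Lemma~\ref{s} once the leading terms $X_1Y^{k_1}$ and $X_2Y^{k_2}$ are handled; we may need to add the reduced $S$-polynomials. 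This yields admissibility together with the description of $\gr$.

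Finally we identify $\gr(D(m))$ with $C$. The difference $x_1y_1^{k_{11}}\cdots y_m^{k_{1m}} - z^r = f_1(z,0,\ldots,0,y_m)-z^r$ lies in $D(m)_{r-1}$, so $\widetilde x_1\widetilde y^{k_1} = \widetilde z^r$ in $\gr$. The second relation is already homogeneous and passes to $\gr$ unchanged. Hence $\gr(D(m))$ is generated by $\widetilde z,\widetilde y_i,\widetilde x_1,\widetilde x_2$ and is a quotient of $C$. It embeds in $\gr(\widehat A)\simeq\widehat A$, so $\widetilde z,\widetilde y_1,\ldots,\widetilde y_m$ are algebraically independent, giving $\dim\gr(D(m))\ge m+1$. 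On the other hand, $C$ is an integral domain of dimension $m+1$ by Lemma~\ref{id2}, applied with $f_1=Z^r$ (monic in $Z$) and $f_2=X_1^s$ (monic in $X_1$). Therefore the surjection $C\to\gr(D(m))$ has zero kernel, and $\gr(D(m))\simeq C$.
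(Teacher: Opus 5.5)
The degree computations for the generators and the closing dimension argument agree with the paper. The gap is the admissibility step, which you flag as the main obstacle but never prove, and the rest of the proof depends on it. Without admissibility you do not know that $\gr(D(m))$ is generated by $\widetilde z,\widetilde y_i,\widetilde x_1,\widetilde x_2$, so you do not know it is a quotient of $C$.

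Moreover, the normal form you propose cannot work as stated. Its leading monomials $X_1Y_1^{k_{11}}\cdots Y_m^{k_{1m}}$ and $X_2Y_1^{k_{21}}\cdots Y_m^{k_{2m}}$ share $Y$-factors, so Lemma~\ref{s} does not apply. As a result, distinct ``reduced'' monomials can be linearly dependent and can have the same leading term in $\widehat A$.

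A concrete example: take $m\ge 2$, $k_{1j}=1$ and $k_{2j}=2$ for all $j$, and put $u=y_1\cdots y_m$. Then in $D(m)$ we have $x_1^{s+1}=x_1x_2u^2=x_2u\,f_1(z,0,\ldots,0,y_m)$. Both $x_1^{s+1}$ and $x_2uz^r$ are reduced in your sense, and both have leading term $z^{r(s+1)}u^{-s-1}$. The element $x_1^{s+1}-x_2uz^r=x_2u\,(f_1(z,0,\ldots,0,y_m)-z^r)$ has $z$-degree $<rs+r$, yet the reduced expression $x_1^{s+1}-x_2uz^r$ uses monomials of degree $rs+r$. So the monomial-wise degree bound fails for reduced expressions.

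Accordingly, $S(F_1,F_2)=X_1^{s+1}-X_2Y_1\cdots Y_m\,f_1(Z,0,\ldots,0,Y_m)$ is nonzero and already reduced. Your Gr\"obner fallback with these leading terms would therefore need a Buchberger completion, which you do not carry out. Even then, Proposition~\ref{Grebner} describes the $\omega$-weight filtration, and you would still have to identify that with the $z$-degree filtration in the statement.

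The paper avoids this by using the other normal form (\ref{el2}). Write $y^b=y_1^{b_1}\cdots y_m^{b_m}$, $k_1=(k_{11},\ldots,k_{1m})$ and $k_2=(k_{21},\ldots,k_{2m})$. Using $z^r=x_1y^{k_1}-(f_1-z^r)$ and $x_1^s=x_2y^{k_2}$, every element becomes a combination of monomials $z^ix_1^jx_2^ly^a$ with $0\le i<r$, $0\le j<s$ and $a\in\ZZ_{\ge0}^m$. This corresponds to the coprime leading monomials $Z^r$ and $X_1^s$. Such a monomial has leading term $z^{i+rj+rsl}y^{a-jk_1-l(sk_1+k_2)}$ with coefficient $1$. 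The triple $(i,j,l)$ is recovered from $i+rj+rsl$ by mixed-radix expansion, and then $a$ is recovered from the $y$-exponent. So distinct standard monomials have distinct leading terms, nothing cancels, and admissibility is immediate; this is the argument of \cite[Lemma~3.5]{GS} that the paper invokes. After that, your final paragraph finishes the proof exactly as in the paper.

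Alternatively, your own dimension count already gives admissibility if you apply it to the $\omega$-weight filtration. Consider the composite $C\twoheadrightarrow\gr_\omega(D(m))\to\widehat A$, where the second map takes top $z$-components; it is well defined because $\deg_z\le d_\omega$. This composite is injective because $C$ is an $(m+1)$-dimensional domain. That forces $d_\omega=\deg_z$ on $D(m)$, so the filtration in the statement coincides with the $\omega$-weight filtration, which is admissible.
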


\begin{remark}
     In this lemma and in the following proof we assume that $\widetilde z, \widetilde y_1, \ldots, \widetilde y_m, \widetilde x_1, \widetilde x_2$ denote the images of $Z, Y_1, \ldots, Y_m, X_1, X_2$ in $D(m)$ respectively. 
\end{remark}

\begin{proof}[Proof of Lemma~\ref{l2}]
    Note that $$D(m)_n = \{0\} \; \text{for all} \; n < 0,$$
    $$\widetilde z \in D(m)_1 \setminus D(m)_0, \widetilde x_1 \in D(m)_r \setminus D(m)_{r-1}, \widetilde x_2 \in D(m)_{rs} \setminus D(m)_{rs-1},$$
    $$ \widetilde y_1, \ldots, \widetilde y_m \in D(m)_0 \setminus D(m)_{-1}.$$
    Using the relations $\widetilde x_1 \widetilde y_1^{k_{11}}\ldots \widetilde y_m^{k_{1m}} = f_1(\widetilde z, 0,\ldots, 0), \widetilde x_2 \widetilde y_1^{k_{21}}\ldots \widetilde y_m^{k_{2m}} = \widetilde x_1^s$, we see that every element $\widetilde g \in D(m)$ can be written as follows:
    \begin{equation}\label{el2}
        \widetilde g = \sum_{i=0}^{r-1}(\sum_{0 \leq j < s}g_{ij}(\widetilde y_1, \ldots, \widetilde y_m)\widetilde x_1^j + \sum_{\substack{0 \leq j < s, \\ l > 0}}h_{ijl}(\widetilde y_1, \ldots, \widetilde y_m)\widetilde x_1^j \widetilde x_2^l) \widetilde z^i,
    \end{equation}
    where $g_{ij}(\widetilde y_1, \ldots, \widetilde y_m), h_{ijl}(\widetilde y_1, \ldots, \widetilde y_m) \in \KK[\widetilde y_1, \ldots, \widetilde y_m]$. Then the rest of the proof follows immediately from the proof of Lemma~\cite[Lemma~3.5]{GS}.
\end{proof}

Consider the following conditions for parameters $s, r, k_{ij}$:
\begin{equation}\label{star}
\begin{split}
    1) \quad & s \geq 2, k_{2j} \geq 2 \; \text{for all} \; 1 \leq j \leq m, \\
    2) \quad & r \geq 2,k_{1j} \geq 2 \; \text{for all} \; 1 \leq j \leq m.
\end{split}
\tag{*}
\end{equation}

\begin{lemma}\label{main}
    Let $C$ be the algebra, defined in the previous lemma, and one of the conditions~(\ref{star}) holds.
    Consider $C = \oplus_{i\in \ZZ} C_i$ as a graded subalgebra of the algebra $\widehat{A}$, where $$\widehat{A} = \oplus_{i\in \NN}\KK[\overline y_1, \overline y_{1}^{-1}, \ldots, \overline y_m, \overline y_m^{-1}]\overline z^i,$$ $$C_i = 0, \; \text{for} \; i < 0,$$
    $$C_i = C \cap \KK[\overline y_1, \overline y_1^{-1}, \ldots, \overline y_m, \overline y_m^{-1}]\overline z^i, \; \text{for} \; i \geq 0.$$
    Then for every homogeneous nonzero LND $\delta$, defined on $C$, we have $$\Ker(\delta) \subseteq \KK[\overline y_1, \ldots, \overline y_m].$$
\end{lemma}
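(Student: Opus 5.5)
The plan is to exploit the toric structure of $C$. By Lemma~\ref{id2}, $C$ is a normal-ish domain; more importantly, it is a monomial algebra. Indeed, in $\widehat{A}$ we have
\[
\overline x_1=\overline z^{\,r}\,\overline y^{-k_1}, \qquad \overline x_2=\overline z^{\,rs}\,\overline y^{-sk_1-k_2},
\]
where $k_i=(k_{i1},\dots,k_{im})$. Hence $C$ is the semigroup algebra of the semigroup $S\subset\ZZ^{m+1}$ generated by $e_0,e_1,\dots,e_m$, $(r,-k_1)$ and $(rs,-sk_1-k_2)$. So $C$ carries a natural $\ZZ^{m+1}$-grading refining the given $\ZZ$-grading by $\deg_{\overline z}$.

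The first step is a reduction. If $\delta$ is homogeneous with respect to the $\overline z$-grading, I would decompose it further into $\ZZ^{m+1}$-homogeneous components. By the vertex argument following Lemma~\ref{hl}, some vertex component $\partial$ is a nonzero homogeneous LND. However, this alone does not control $\Ker\delta$. A cleaner route is the following. Let $u\in\Ker\delta$ be $\overline z$-homogeneous of degree $i$. Suppose $\Ker\delta\not\subseteq\KK[\overline y_1,\dots,\overline y_m]$. Since $\Ker\delta$ is graded, there is a homogeneous $u\in\Ker\delta$ with $u\notin\KK[\overline y]$. Such a $u$ necessarily has $i>0$, because $C_0=\KK[\overline y_1,\dots,\overline y_m]$: any element containing $\overline x_1$ or $\overline x_2$ has positive $\overline z$-degree.

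The second step uses transcendence degree. $\Ker\delta$ has transcendence degree $m$ (Proposition~\ref{LND}), is factorially closed, and is graded. I would show that $\Ker\delta$ must contain $m$ algebraically independent homogeneous elements. If it contains one element of positive degree, then $\Ker\delta\cap C_0$ has transcendence degree at most $m-1$, so some $\overline y$-monomial direction is moved.

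The third step, which is the main obstacle, is to rule this out using condition~(\ref{star}). My approach is to use the standard description of homogeneous LNDs of toric varieties via Demazure roots. A homogeneous LND of the affine toric variety $\mathrm{Spec}\,C$ whose kernel is not contained in $C_0$ corresponds, after the reduction to a $\ZZ^{m+1}$-homogeneous vertex component, to a root $e$ with respect to a facet ray $\rho$ of the cone $\sigma$. Its kernel is spanned by the monomials in $\rho^\perp$.

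So I would compute the rays of the dual cone of $S$. The cone generated by $S$ is generated by $e_1,\dots,e_m$ and $(r,-k_1)$, because $(rs,-sk_1-k_2)$ lies beyond $s(r,-k_1)$. The dual cone then has rays $\rho_0=(1,0,\dots,0)$, corresponding to $\overline z$-degree, together with the rays $\rho_j=(k_{1j},0,\dots,r,\dots,0)$.

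\begin{itemize}
\item For a root associated with $\rho_0$, the kernel would be $S\cap\rho_0^\perp$, which is exactly the degree-zero part $\KK[\overline y]$. This is the desired case.
\item For a root $e$ associated with $\rho_j$, the condition $\langle\rho_j,e\rangle=-1$ together with $e+s'\in S$ for all $s'\in S\setminus\rho_j^\perp$ must be checked.
\end{itemize}

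The task is to show that under~(\ref{star}) no such root exists for $\rho_j$, because $S$ is not saturated in the relevant direction. The condition $k_{1j}\ge2$, $r\ge2$ makes the lattice point $(1,0)$ fail, while $k_{2j}\ge2$, $s\ge2$ makes the $\overline x_2$ generator obstruct it. I expect this combinatorial nonexistence check, handling the non-normality of $S$, to be the hardest part. I would carry it out by testing $e=\overline y_j^{-1}\cdot$(monomial) against the generators $\overline x_1$ and $\overline x_2$ explicitly.
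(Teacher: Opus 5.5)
Your toric strategy can be made to work, but as written Step 3 rests on a wrongly computed cone, and the decisive check is not carried out. Write $w:=(rs,-sk_1-k_2)$. Since $w=s(r,-k_1)+(0,-k_2)$ with $k_{2j}\ge1$, the vector $w$ lies \emph{outside} $\mathrm{cone}(e_1,\dots,e_m,(r,-k_1))$: writing $w=c(r,-k_1)+\sum_j b_je_j$ forces $c=s$ and $b_j=-k_{2j}<0$. In fact the cone spanned by $S$ is the simplicial cone on $e_1,\dots,e_m,w$, because
$(r,-k_1)=\frac1s w+\sum_j\frac{k_{2j}}{s}e_j$ and $e_0=\frac1{rs}w+\sum_j\frac{sk_{1j}+k_{2j}}{rs}e_j$.
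Hence the rays of $\sigma$ are $\rho_0$ and $\rho_j$ proportional to $(sk_{1j}+k_{2j},0,\dots,rs,\dots,0)$, with $rs$ in the $\overline y_j$-slot. Your $\rho_j=(k_{1j},0,\dots,r,\dots,0)$ takes the value $-rk_{2j}<0$ on $\overline x_2$, so it is not in $\sigma$ at all, and a root test against it proves nothing. Moreover, the nonexistence of roots at $\rho_j$ for $j\ge1$, which you yourself call the main obstacle, is only announced. So the proposal does not yet establish the lemma.

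Two pieces are needed to close the argument.

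First, you must transfer information from $\Ker\delta$, where $\delta$ is only $\overline z$-homogeneous, to a $\ZZ^{m+1}$-homogeneous LND; the transcendence-degree remark does not do this. Choose a linear form $\ell$ on $\ZZ^{m+1}$ that separates the finitely many $\ZZ^{m+1}$-degrees of the components of $\delta$. By Lemma~\ref{hl}, the $\ell$-top component $\partial$ is a nonzero $\ZZ^{m+1}$-homogeneous LND, and the $\ell$-leading form of any $f\in\Ker\delta$ lies in $\Ker\partial$. The $\ZZ^{m+1}$-homogeneous LNDs of the non-normal algebra $\KK[S]$ are $\chi^u\mapsto\langle\rho,u\rangle\chi^{u+e}$, with $\langle\rho,e\rangle=-1$ and $u+e\in S$ for all $u\in S\setminus\rho^\perp$ (extend to the normalization and restrict). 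So $\Ker\partial$ is spanned by monomials in $S\cap\rho^\perp$. If $\rho=\rho_0$, a $\overline z$-homogeneous $f\in\Ker\delta$ therefore has $\overline z$-degree $0$, i.e. $f\in C_0=\KK[\overline y_1,\dots,\overline y_m]$.

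Second, you must exclude $\rho_j$ for $j\ge1$. A root $e$ at $\rho$ forces the least positive value of $\rho$ on $S$ to be $1$. Also, since $u+\langle\rho,u\rangle e\in S\cap\rho^\perp$ for all $u\in S$, it forces $\ZZ^{m+1}=\ZZ(S\cap\rho^\perp)\oplus\ZZ e$. Here $S\cap\rho_j^\perp$ is generated by $\overline y_i$ ($i\ne j$) and $\overline x_2$. Projecting to the $(\overline z,\overline y_j)$-coordinates then gives $\gcd(rs,sk_{1j}+k_{2j})=1$. With that, $\overline z,\overline y_j,\overline x_1$ have $\rho_j$-values $sk_{1j}+k_{2j}$, $rs$, $rk_{2j}$, all $\ge2$ under~(\ref{star}), which is a contradiction.

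Completed this way, your route is genuinely different from the paper's, and cleaner. The paper instead uses factorial closedness to force $\overline z$, $\overline x_1$ or $\overline x_2$ into $\Ker\delta$. It then rules out $\overline x_2$ by base change to $\KK(X_2)$ and a Groebner degeneration to a rigid algebra.
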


\begin{remark}
 In this lemma and in the following proof we assume that $\overline z, \overline y_1, \ldots, \overline y_m, \overline x_1, \overline x_2$ denote the images of $Z, Y_1, \ldots, Y_m, X_1, X_2$ in $C$ respectively.     
\end{remark}

\begin{remark}
    We assume that $m > 1$. The case when $m = 1$ was considered in~\cite{GS}.
\end{remark}

\begin{proof}[Proof of Lemma~\ref{main}]
Let $\delta$ be a nonzero LND,which is homogeneous with respect to the grading, defined on $C$. Then elements of $C$ have the following degrees: $$\deg \overline y_1 = \ldots = \deg \overline y_m = 0, \ \deg \overline z = 1, \ \deg \overline x_1 = r, \ \deg \overline x_2 = rs.$$ Put
$$R := \frac{\KK[Y_1, \ldots, Y_m, X_1, X_2]}{(X_2Y_1^{k_{21}}\ldots Y_m^{k_{2m}} - X_1^s)}.$$
Note that $R \hookrightarrow \oplus_{i\in r\ZZ} C_i$, so $R$ can be identified with a subalgebra of $C$ identifying the images of $ Y_1, \ldots, Y_m, X_1, X_2$ with $ \overline y_1, \ldots, \overline y_m, \overline x_1, \overline x_2$ respectively. Firstly, show that $\Ker \delta \subseteq R$.

Each element $f \in C$ can be uniquely written as the following sum:
$$f = \sum_{i=0}^{r-1} f_i \overline z^i = \sum_{i=0}^{r-1} \overline z^i\sum_{j \geq 0} \widetilde f_{ij},$$
where $f_i, \widetilde f_{ij} \in R, \widetilde f_{ij}$ are homogeneous summands of $f_i$. Indeed, suppose $\overline z^i \widetilde f_{ij} = \overline z^l \widetilde f_{lp}$ for some $0 \leq i, l \leq r-1$. Then $\deg (\overline z^i \widetilde f_{ij}) = deg( \overline z^l \widetilde f_{lp})$, i.e. $\deg(\widetilde f_{ij}) + i= \deg(\widetilde f_{lp}) + l$. But $\widetilde f_{ij}, \widetilde f_{lp} \in R$, so we have:
$$\deg(\widetilde f_{ij}) - \deg(\widetilde f_{lp}) \equiv 0\ \mathrm{mod}\ r,$$
i.e. $i - l \equiv 0\ \mathrm{mod}\ r$, then $i = l$.

Suppose that $\Ker (\delta) \nsubseteq R$ and let $f \in \Ker (\delta) \setminus R$. As $\Ker \delta$ is a graded subring of $C$ and $f \notin R$, then $\widetilde f_{ij} \overline z^i \in \Ker (\delta)$ for some $\widetilde f_{ij} \in R, i > 0$. Therefore, $ \overline z^i \in R$, since a kernel of an LND is factorially closed. Using the relations $\overline x_1 \overline y_1^{k_{11}}\ldots \overline y_m^{k_{1m}} = \overline z^r$ and $\overline x_2 \overline y_1^{k_{21}}\ldots \overline y_m^{k_{2m}} = \overline x_1^s$, we have $\overline y_1, 
\ldots, \overline y_m, \overline z, \overline x_1, \overline x_2 \in \Ker (\delta)$, so $\delta = 0$, and we get a contradiction. Hence, $\Ker (\delta) \subseteq R$.

Now, let $g \in \Ker (\delta) \subseteq R$. Then $g$ can be written as the following sum:
\begin{equation}\label{hom}
    g = \sum_{0 \leq i < s}g_i(\overline y_1, \ldots, \overline y_m)\overline x_1^i + \sum_{\substack{0 \leq i < s,\\ j >0}}g_{ij}(\overline y_1, \ldots, \overline y_m)\overline x_1^i \overline x_2^j,
\end{equation}
where $g_i(\overline y_1, \ldots, \overline y_m), g_{ij}(\overline y_1, \ldots, \overline y_m) \in \KK[\overline y_1, \ldots, \overline y_m].$\\
Note that $$\deg (g_i(\overline y_1, \ldots, \overline y_m)\overline x_1^i) = ir < sr, \;\text{for} \; i<s,$$ $$\deg(g_{ij}(\overline y_1, \ldots, \overline y_m)\overline x_1^i \overline x_2^j) = ir + jrs, \;\text{for} \; 0 \leq i <s, j >0.$$

Thus every homogeneous element of $C$ in $R$ is of the form $$\text{either} \;\; g_i(\overline y_1, \ldots, \overline y_m)\overline x_1^i \;\; \text{for some} \; 0 \leq i < s,$$ $$\text{or} \;\; g_{ij}(\overline y_1, \ldots, \overline y_m)\overline x_1^i \overline x_2^j \;\; \text{for some} \; 0 \leq i <s, j >0.$$ $\Ker (\delta)$ is a graded subring of $C$, then homogeneous components of $g$ are elements of $\Ker (\delta)$, since $g \in \Ker (\delta)$.

We have $g_i(\overline y_1, \ldots, \overline y_m)x_1^i \in \Ker (\delta)$.If $i > 0$, then $\overline x_1 \in \Ker (\delta)$, since a kernel of an LND is factorially closed. However, using the relations $\overline x_1 \overline y_1^{k_{11}}\ldots \overline y_m^{k_{1m}} = \overline z^r$ and $\overline x_2\overline y_1^{k_{21}}\ldots \overline y_m^{k_{2m}} = \overline x_1^s$, we obtain the following: $\overline y_1, 
\ldots, \overline y_m, \overline z, \overline x_1, \overline x_2 \in \Ker (\delta)$. Hence, $\delta = 0$, which contradicts the assumption. Hence, in the equality~(\ref{hom}) the first sum is only consists of $g_0(\overline y_1, \ldots, \overline y_m)$.

We have  $\overline x_2 \in \Ker (\delta)$, since $g_{ij}(\overline y_1, \ldots, \overline y_m)\overline x_1^i \overline x_2^j \in \Ker (\delta)$ and since $\Ker (\delta)$ is factorially closed. Then $\delta$ extends to a nonzero LND $\widetilde \delta$ of $\widetilde R := \frac{\KK(X_2)[Z, Y_1, \ldots, Y_m, X_1]}{(X_1Y_1^{k_{11}}\ldots Y_m^{k_{1m}} - Z^r, X_2Y_1^{k_{21}}\ldots Y_m^{k_{2m}} - X_1^s)}$. Let $\widetilde z, \widetilde y_1, \ldots, \widetilde y_m, \widetilde x_1, \widetilde x_2$ denote the images of $Z, Y_1, \ldots, Y_m, X_1, X_2$ in $\widetilde R$ respectively.

Consider polynomials $F_1 := X_1Y_1^{k_{11}}\ldots Y_m^{k_{1m}} - Z^r$ and $F_2 := Y_1^{k_{21}}\ldots Y_m^{k_{2m}} - X_1^s$. We may assume that $F_1$ and $F_2$ generate an ideal $I := (X_1Y_1^{k_{11}}\ldots Y_m^{k_{1m}} - Z^r, X_2Y_1^{k_{21}}\ldots Y_m^{k_{2m}} - X_1^s)$ in $\KK(X_2)[Y_1, \ldots, Y_m, X_1]$. Define a lexicographic order on $\widetilde R$ by $X_1 < Y_1< \ldots < Y_m < Z$.

1) In the case when $s \geq 2, k_{2j} \geq 2$ we introduce $\omega$-weight filtration on $\widetilde R$: put $\deg_\omega(Z) := 1$ and put degrees of the rest of variables equal zero.

We show that $F_1, F_2$ is the Groebner basis of the ideal $I$ with respect to a lexicographic order, constructed by $\omega$. Note that $\overline{F_1}^\omega =Z^r$ and $\overline{F_2}^\omega = Y_1^{k_{21}}\ldots Y_m^{k_{2m}} - X_1^s$ are relatively prime. Then there is a sequence of elementary divisions of $S(F_1, F_2)$ by $\{F_1, F_2\}$ such that the remainder of the sequence is zero, by Lemma~\ref{s}. By Buchberger’s Criterion, $\{F_1, F_2\}$ is the Groebner basis of the ideal $I$. Using Proposition~\ref{Grebner}, we obtain: $\gr(\widetilde R) = \frac{\KK(X_2)[Z, Y_1, \ldots, Y_m, X_1]}{(Z^r, Y_1^{k_{21}}\ldots, Y_m^{k_{2m}} - X_1^s)}$. 

Let $\partial$ be a nonzero LND of $\gr(\widetilde R)$. It is easy to see that $\partial(z) = 0$. Then $\partial$ extends to a nonzero LND $\widehat\partial$ of an algebra $\widehat R := \frac{\KK(X_2)(Z)[Y_1, \ldots, Y_m, X_1]}{(Y_1^{k_{21}}\ldots Y_m^{k_{2m}} - X_1^s)}$. The algebra $\widehat R$ is rigid by~\cite[Lemma 4.5]{G}. Hence, $\widehat{\partial} = 0$ and $\partial = 0$. It follows that $\gr(\widetilde R)$ is rigid, then, by Proposition~\ref{grrig}, $\widetilde{R}$ is also rigid. That is $\widetilde\delta = 0$, so $\delta = 0$, which leads to a contradiction.

2) In the case when $r \geq 2,k_{1j} \geq 2$ we introduce $\omega$-weight filtration on $\widetilde R$: put $\deg_\omega(Z) := 1, \deg_\omega(X_1) := r$ and put degrees of the rest of variables equal zero.

We show that $F_1, F_2$ is the Groebner basis of the ideal $I$ with respect to a lexicographic order, constructed by  $\omega$. Note that $\overline{F_1}^\omega = X_1Y_1^{k_{11}}\ldots Y_m^{k_{1m}} - Z^r$ and $\overline{F_2}^\omega = X_1^s$ are relatively prime. Then there is is a sequence of elementary divisions of $S(F_1, F_2)$ by $\{F_1, F_2\}$ such that the remainder of the sequence is zero, by Lemma~\ref{s}. By Buchberger’s Criterion, $\{F_1, F_2\}$ is the Groebner basis of the ideal $I$. Using Proposition~\ref{Grebner}, we obtain: $\gr(\widetilde R) = \frac{\KK(X_2)[Z, Y_1, \ldots, Y_m, X_1]}{(X_1Y_1^{k_{11}}\ldots Y_m^{k_{1m}} - Z^r, X_1^s)}$. 

Let $\partial$ be a nonzero LND of $\gr(\widetilde R)$. It is easy to see that $\partial(x_1) = 0$. Then $\partial$ extends to a nonzero LND $\widehat\partial$ of an algebra $\widehat R := \frac{\KK(X_2)(X_1)[Z, Y_1, \ldots, Y_m]}{(X_1Y_1^{k_{11}}\ldots Y_m^{k_{1m}} - Z^r)}$. The algebra $\widehat R$ is rigid by~\cite[Lemma 4.5]{G}. Hence, $\widehat{\partial} = 0$ and $\partial = 0$. It follows that $\gr(\widetilde R)$ is rigid, then, by Proposition~\ref{grrig}, $\widetilde{R}$ is also rigid. That is $\widetilde\delta = 0$, so $\delta = 0$, which leads to a contradiction.

Since both of these cases contradict the assumption, we conclude that in the equality~(\ref{hom}) the second sum is equal to zero. Hence, $g = g_0(\overline y_1, \ldots, \overline y_m)$ and $\Ker(\delta) \subseteq \KK[\overline y_1, \ldots, \overline y_m]$.

\end{proof}

\begin{lemma}\label{lnd}
    There is a nonzero LND $\delta$ of $B$ such that $$\Ker(\delta) = \KK[y_1, \ldots, y_m].$$
\end{lemma}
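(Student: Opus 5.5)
Define $\delta$ on $B$ as the restriction of a triangular derivation of the localization. Let $y:=y_1^{k_{11}}\cdots y_m^{k_{1m}}$ and $w:=y_1^{k_{21}}\cdots y_m^{k_{2m}}$. By Lemma~\ref{id2}, $B$ is a domain. From the two relations, $x_1=f_1/y$ and $x_2=f_2/w$, so
$$B\subseteq B[y_1^{-1},\ldots,y_m^{-1}]=\KK[y_1^{\pm1},\ldots,y_m^{\pm1}][z].$$
On this Laurent-polynomial ring, take the LND $D:=yw\,\partial/\partial z$. It is locally nilpotent because it kills $\KK[y_1^{\pm1},\ldots,y_m^{\pm1}]$ and lowers the degree in $z$. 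It remains to check $D(B)\subseteq B$; since $D$ is a derivation, checking generators suffices.

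First, $D(y_j)=0$ and $D(z)=yw\in B$. Next,
$$D(x_1)=\frac{1}{y}\,\frac{\partial f_1}{\partial z}\,yw=w\,\frac{\partial f_1}{\partial z}(z,y)\in B.$$
For $x_2$ we have $D(x_2)=\frac1w D(f_2(z,y,x_1))$. By the chain rule,
$$D(f_2)=\frac{\partial f_2}{\partial z}\,yw+\frac{\partial f_2}{\partial X_1}\,D(x_1)=w\Bigl(y\,\frac{\partial f_2}{\partial z}+\frac{\partial f_2}{\partial X_1}\,\frac{\partial f_1}{\partial z}\Bigr).$$
Hence $D(x_2)=y\,\partial_z f_2+\partial_{X_1}f_2\cdot\partial_z f_1$, evaluated at $(z,y,x_1)$, which lies in $B$. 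The factor $w$ in $D(z)$ is exactly what makes this division work, and this is the reason for choosing $yw$ rather than $y$ alone. Set $\delta:=D|_B$. It is a nonzero LND of $B$ with $\KK[y_1,\ldots,y_m]\subseteq\Ker\delta$.

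For the reverse inclusion, note that $\Ker D$ on the Laurent ring is $\KK[y_1^{\pm1},\ldots,y_m^{\pm1}]$. Hence
$$\Ker\delta=B\cap\KK[y_1^{\pm1},\ldots,y_m^{\pm1}].$$
We must show that this intersection is $\KK[y_1,\ldots,y_m]$. One route is a transcendence-degree argument: by Proposition~\ref{LND} the kernel has transcendence degree $m$ and is factorially closed in $B$, and $\KK[y]$ is algebraically closed in the Laurent ring. That alone does not prevent negative powers, so we argue directly instead.

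Suppose $g=h/y^{a}\in B$ with $h\in\KK[y_1,\ldots,y_m]$, and suppose $y_j$ appears to a negative power in $g$. Reduce modulo $y_j$, or equivalently use the filtration of Lemma~\ref{l1}. Using the normal form~(\ref{el1}) in $B$ (elements are polynomials in $z,y$ plus combinations of $x_1^t x_2^p$ times monomials in $z,y$), compare the $z$-degrees in the Laurent ring. Since $f_1$ is monic in $z$ of degree $r$ and $f_2$ is monic in $X_1$, the element $x_1^tx_2^p$ has $z$-leading term $z^{r(t+ps)}$ times a Laurent monomial in $y$. Consequently a combination of the terms in~(\ref{el1}) can be $z$-free only if all terms with $t$ or $p$ positive cancel. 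To see this, take the highest-$z$-degree term: the normal form is chosen so that distinct basis monomials have distinct leading $(z\text{-degree},\,y\text{-exponent})$ data. Then $g=f_0\in\KK[z,y_1,\ldots,y_m]$, and since $g$ is $z$-free, $g\in\KK[y_1,\ldots,y_m]$.

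The main obstacle is this last step: showing that the normal form~(\ref{el1}) is unique and has distinguishable leading terms, so that no nontrivial combination involving $x_1,x_2$ collapses to a Laurent polynomial in $y$. If that fails, the fallback is to argue by factorial closedness. Take $g\in\Ker\delta$ and write $y^{N}g=h\in\KK[y]\subseteq B$. Every factor $y_j$ lies in $\Ker\delta$, and $B/(y_j)$ is a domain-like quotient in which $y_j\nmid h$ unless $h$ is actually divisible by $y_j$ in $\KK[y]$. This uses Lemma~\ref{id2}-type non-zero-divisor statements: $y_j$ being prime, or at least having $B/y_jB\supseteq\KK[y]/(y_j)$ injectively. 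Dividing out the powers of $y_j$ one at a time then yields $g\in\KK[y]$.
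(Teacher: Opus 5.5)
Your construction of $\delta$ is correct and is the paper's: $\delta$ is the restriction to $B\subseteq B[y_1^{-1},\ldots,y_m^{-1}]=\KK[y_1^{\pm1},\ldots,y_m^{\pm1}][z]$ of $yw\,\partial/\partial z$. Your values $\delta(x_1)=w\,\partial_zf_1$ and $\delta(x_2)=y\,\partial_zf_2+\partial_{X_1}f_2\,\partial_zf_1$ are the ones forced by the relations; the formulas displayed in the paper for $\delta(x_1),\delta(x_2)$ do not satisfy them. Your reduction $\Ker\delta=B\cap\KK[y_1^{\pm1},\ldots,y_m^{\pm1}]$ is also right. Showing that this intersection equals $\KK[y_1,\ldots,y_m]$ is exactly the assertion ``$\KK[y_1,\ldots,y_m]$ is algebraically closed in $B$'' on which the paper's proof rests. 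You never establish it, so the reverse inclusion is a genuine gap.

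Your main route rests on false claims. Monicity of $f_2$ in $X_1$ says nothing about the $Z$-degrees of its lower coefficients, so $x_2$ need not have $z$-leading term $z^{rs}$ times a Laurent monomial. Also, the coefficients in (\ref{el1}) are arbitrary polynomials in $z$, so distinct terms can share leading data. For instance, with $k_{1q}=k_{2q}$ for all $q$, $f_1=Z$ and $f_2=X_1^2+Z^{10}$, both $x_2$ and $z^9x_1$ have leading term $z^{10}y_1^{-k_{11}}\cdots y_m^{-k_{1m}}$. Your fallback names the right criterion but justifies it wrongly: $y_j$ is in general not prime in $B$, and $B/y_jB$ is not a domain (for $f_1=Z^r$ with $r\ge2$, $z$ is a nonzero nilpotent there).

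What is missing is the easy proof that $\KK[y_1,\ldots,y_m]/(y_j)\to B/y_jB$ is injective. Since $k_{1j},k_{2j}\ge1$ (as used in the proof of Lemma~\ref{id2}),
\[B/y_jB\cong\KK[y_i : i\neq j][Z,X_1,X_2]/(\bar f_1,\bar f_2),\]
where $\bar f_1,\bar f_2$ are obtained from $f_1,f_2$ by setting $Y_j=0$. Because $\bar f_1$ is monic in $Z$ and $\bar f_2$ is monic in $X_1$, this ring is a free $\KK[y_i : i\neq j]$-module. Its basis is $z^ax_1^bx_2^c$ with $a<r$, $b<s$, $c\ge0$, and it contains $1$. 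Hence $y_jB\cap\KK[y_1,\ldots,y_m]=y_j\KK[y_1,\ldots,y_m]$.

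Now let $g\in B\cap\KK[y_1^{\pm1},\ldots,y_m^{\pm1}]$, and choose a monomial $\mu$ in the $y_i$ of minimal degree with $h:=\mu g\in\KK[y_1,\ldots,y_m]$. Suppose $y_j\mid\mu$. Then $h=y_j\cdot(\mu y_j^{-1}g)\in y_jB\cap\KK[y_1,\ldots,y_m]$, so $h=y_jh'$ with $h'\in\KK[y_1,\ldots,y_m]$. Cancelling $y_j$ in the Laurent ring gives $\mu y_j^{-1}g=h'$, which contradicts minimality. Thus $\mu=1$ and $g\in\KK[y_1,\ldots,y_m]$. With this step inserted, your proof is complete.
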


\begin{proof}
    Define $\delta$ by:
    $$\delta(y_1) = \ldots = \delta (y_m) = 0,$$
    $$\delta(z) = y_1^{k_{11}+k_{21}} \ldots y_m^{k_{1m}+k_{2m}},$$
    $$\delta(x_1) = \frac{f_1(y_1^{k_{11}+k_{21}} \ldots y_m^{k_{1m}+k_{2m}}, 0, \ldots, 0)}{y_1^{k_{11}} \ldots y_m^{k_{1m}}} = P_1(y_1, \ldots, y_m),$$
    $$\delta(x_2) = \frac{f_2(y_1^{k_{11}+k_{21}} \ldots y_m^{k_{1m}+k_{2m}}, 0, \ldots, 0)}{y_1^{k_{21}} \ldots y_m^{k_{2m}}} = P_2(y_1, \ldots, y_m),$$
    where $P_1(y_1, \ldots, y_m), P_2(y_1, \ldots, y_m) \in \KK[y_1, \ldots, y_m]$. One can see that $\delta$ is an LND of $B$ and $\KK[y_1, \ldots, y_m] \subseteq \Ker(\delta)$. Moreover, $\KK[y_1, \ldots, y_m]$ is algebraically closed in $B$ and $\mathrm{tr.deg}_{\KK[y_1, \ldots, y_m]}B = 1$. Then, by Proposition~\ref{LND}, we see that $\Ker(\delta) = \KK[y_1, \ldots, y_m]$.
\end{proof}

\begin{theorem}\label{ML}
    Let $B$ be the algebra that was defined earlier, and one of the conditions~(\ref{star}) holds. Then $$\mathrm{ML}(B) = \KK[y_1, \ldots, y_m].$$
\end{theorem}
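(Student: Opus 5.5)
The inclusion $\mathrm{ML}(B) \subseteq \KK[y_1, \ldots, y_m]$ is immediate from Lemma~\ref{lnd}: there is an LND of $B$ with kernel exactly $\KK[y_1,\ldots,y_m]$. It therefore remains to show that every nonzero $\partial \in \mathrm{LND}(B)$ annihilates $y_1, \ldots, y_m$. Since kernels of LNDs are factorially closed (Proposition~\ref{LND}), it suffices to prove $\KK[y_1,\ldots,y_m] \subseteq \Ker(\partial)$. By Proposition~\ref{LND} both $\Ker(\partial)$ and $\KK[y_1,\ldots,y_m]$ have transcendence degree $m$. So the plan is to show $\Ker(\partial) \subseteq \KK[y_1,\ldots,y_m]$ and then use algebraic closedness.

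The main tool is the chain of degenerations $B = D(1) \rightsquigarrow D(2) \rightsquigarrow \cdots \rightsquigarrow D(m) \rightsquigarrow C$ from Lemmas~\ref{l1} and~\ref{l2}. Each step is an admissible $\ZZ$-filtration. Applying Theorem~\ref{homgr} repeatedly, a nonzero LND $\partial$ of $B$ yields nonzero LNDs $\partial_j$ of $D(j)$ with $\rho_j(\Ker \partial_{j-1}) \subseteq \Ker \partial_j$, and finally a nonzero LND $\bar\partial$ of $C$. By Lemma~\ref{hl} we may replace $\bar\partial$ by an extremal homogeneous component with respect to the $\ZZ$-grading of Lemma~\ref{main}. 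For this I would rather apply Theorem~\ref{homgr} once more to the grading filtration of $C$, which gives a nonzero homogeneous LND whose kernel contains the image of $\Ker\bar\partial$; here the image map is the identity on homogeneous elements. By Lemma~\ref{main}, that kernel lies in $\KK[\overline y_1,\ldots,\overline y_m]$.

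Now take $g \in \Ker(\partial)$ and push it through the composite map $\rho = \rho_{m+1}\circ\cdots\circ\rho_1$. Its image is a leading form lying in $\KK[\overline y_1,\ldots,\overline y_m]$. I then want to pull this back and conclude $g \in \KK[y_1,\ldots,y_m]$. A cleaner route avoids pulling back exact membership: argue instead that $\partial(y_i)=0$. Suppose not. Then $\Ker\partial$ contains some $g \notin \KK[y_1,\ldots,y_m]$, since $\Ker\partial$ has transcendence degree $m$ and $\KK[y]$ is algebraically closed in $B$. Indeed, if $\Ker\partial \subseteq \KK[y]$, equal transcendence degree together with algebraic closedness of $\Ker\partial$ in $B$ forces $\Ker\partial = \KK[y]$. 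Using the normal forms~(\ref{el1}) and~(\ref{el2}), choose $g$ so that its leading form involves $z$, $x_1$ or $x_2$. This is possible after subtracting a polynomial in the $y$'s, because the leading-form map is compatible with the normal form. The leading form then lies outside $\KK[\overline y]$, contradicting Lemma~\ref{main}.

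The main obstacle is this last step. The trouble is that $\rho$ is not additive, so subtracting the $y$-part of $g$ may change which term is leading. I would handle it by choosing $g \in \Ker\partial \setminus \KK[y]$ of minimal filtration degree at each stage. Alternatively, I would note that $\rho(\Ker\partial)$ generates a subalgebra of transcendence degree $m$ that is contained in $\KK[\overline y]$ and consists of leading forms. Then I would show that an element whose normal form has a nonzero term involving $z$, $x_1$ or $x_2$ cannot have a pure-$y$ leading form under the chosen weights: $z$ and $x_1, x_2$ have positive weight in the last filtration, while pure $y$-monomials have weight $0$. Carrying this weight comparison through each intermediate $y_j$-filtration is the technical check I expect to be hardest.
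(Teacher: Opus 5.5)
Your reduction to $\Ker\partial\subseteq\KK[y_1,\dots,y_m]$ and the chain $B=D(1)\rightsquigarrow\cdots\rightsquigarrow D(m)\rightsquigarrow C$, together with Theorem~\ref{homgr} and Lemma~\ref{main}, are the same framework the paper uses. However, the decisive step is never carried out, and both devices you propose for it fail.

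First, you may not subtract a polynomial $p(y_1,\dots,y_m)$ from $g\in\Ker\partial$. Under your contradiction hypothesis $\partial(y_i)\neq0$, nothing puts $p(y)$ in $\Ker\partial$, so $g-p(y)$ leaves the kernel and Lemma~\ref{main} no longer constrains its leading form. The obstruction is leaving the kernel, not the non-additivity of $\rho$.

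Second, the weight comparison you intend to push through the intermediate stages is false there. In the filtration of Lemma~\ref{l1} the leading form is the part of lowest $y_j$-order, $z$ has weight $0$ and $y_j$ has weight $-1$. So extra powers of $y_j$ hide $z$, $x_1$, $x_2$: for example $g=y_1+zy_1^2\in B$ has $\rho_1(g)=\tilde y_1$. The comparison is valid only at the final $z$-stage. There $D(m)_n=0$ for $n<0$ and the terms in (\ref{el2}) have pairwise distinct weights $i+jr+lrs$, so a leading form in $\KK[\bar y]$ does force the element into $\KK[\tilde y]$.

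In fact no argument using only what you use can close the gap. Consider the subalgebra $S=\KK[y_1+zy_1^2,y_2,\dots,y_m]\subseteq B$, which has transcendence degree $m$. The map $\rho_1$ sends every element $\sum_a P_a(y_2,\dots,y_m)(y_1+zy_1^2)^a$ of $S$ to $P_{a_0}(\tilde y_2,\dots,\tilde y_m)\tilde y_1^{a_0}$, where $a_0$ is minimal with $P_{a_0}\neq0$. Hence all composite leading forms of $S$ lie in $\KK[\bar y_1,\dots,\bar y_m]$, and choosing elements of minimal filtration degree inside $S$ produces no contradiction. Yet $S\not\subseteq\KK[y_1,\dots,y_m]$.

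So you need an additional property of $\Ker\partial$ that rules out such $S$, or a different choice of filtrations. Factorial closedness is one candidate: $S$ violates it, since $y_1(1+zy_1)\in S$ but $y_1\notin S$.

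For comparison, the paper follows the element-wise pull-back route you set aside. It is sound at the $z$-stage for the reason above. For the $y_j$-stages, however, the paper only says ``proceeding in the same manner'', and the example $y_1+zy_1^2$ shows that the normal form (\ref{el1}) alone does not justify that implication. So the paper's text does not supply the missing ingredient either.
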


\begin{proof}
  Let $\delta$ be a nonzero LND of $B$. Consider the admissible $\ZZ$-filtration $\{B_n\}_{n \in \ZZ}$, defined in Lemma~\ref{l1}. Put $\rho_j : D(j) \rightarrow D(j+1)$, where $D(1) := B$. The map $\rho$ is a natural map from an algebra to the associated graded algebra. An LND $\widetilde \delta_1 := \delta$ induces nonzero LNDs $\widetilde \delta_2$ of $D(2), \ldots,$ $\widetilde \delta_m$ of $D(m)$ by Theorem~\ref{homgr}. Moreover, we have inclusions: $\rho_1(\Ker(\widetilde\delta_1)) \subseteq \Ker(\widetilde \delta_2), \ldots, \rho_{m-1}(\Ker(\widetilde \delta_{m-1})) \subseteq \Ker(\widetilde \delta_m)$. Consider $f \in \Ker(\delta)$ and set $\rho := \rho_1 \circ \ldots \circ \rho_{m-1}$. Replacing, if necessary, $f$ by $f - \lambda$ for an appropriate $\lambda \in \KK^{\times}$ at each step, we may assume that $\rho(f) \notin \KK$ and $\rho(f) \in \Ker(\widetilde \delta)$.

   Consider the admissible $\ZZ$-filtration $\{D(m)_n\}_{n \in \ZZ}$ that was defined in Lemma~\ref{l2}. Put $\overline \rho : D(m) \rightarrow C$. The map $\rho$ is a natural map from an algebra to the associated graded algebra. Moreover, we have an inclusion: $\overline \rho(\Ker(\widetilde \delta_m)) \subseteq \Ker(\overline \delta)$. Hence, $\overline \rho(\rho(f)) \in \Ker(\overline \delta)$. By Lemma~\ref{main}, $\Ker(\overline \delta) \subseteq \KK[\overline{y}_1, \ldots,  \overline{y}_m]$, where $\overline{y}_i$ is the image of $y_i$ in $C$. So, $\overline \rho(\rho(f)) \in \KK[\overline{y}_1, \ldots,  \overline{y}_m] \subseteq C$. The definition of the filtration of $D(m)$ from Lemma~\ref{l2} and the equation~(\ref{el2}) imply an inclusion $\rho(f) \in \KK[\widetilde{y}_1, \ldots,  \widetilde{y}_m] \subseteq D$, where $\widetilde{y}_i$ is the image of $y_i$ in $D$. Proceeding in the same manner and using the definitions of the filtrations of $D(j)$ from Lemma~\ref{l1} and the equations~(\ref{el2}), we obtain the following: $f \in \KK[y_1 \ldots, y_m] \subseteq D(1) = B$. Therefore, $\Ker(\delta) \subseteq \KK[y_1, \ldots, y_m]$. But $\Ker(\delta)$ is a factorially closed subring of $B$ and $\mathrm{tr.deg}_{\KK}\Ker(\delta) = m$. Thus $\Ker(\delta) = \KK[y_1, \ldots, y_m]$. Since $\delta$ is an arbitrary LND, we have: $\mathrm{ML}(B) = \KK[y_1, \ldots, y_m]$ by Lemma~\ref{lnd}.
\end{proof}

\begin{lemma}\label{fixit}
    For every automorphism $\phi \in \Aut(B)$ there is $\lambda \in \KK^{\times}$ such that $\phi(y_j) = \lambda_j y_i$.
\end{lemma}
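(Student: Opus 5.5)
The argument will rest on the fact that the Makar-Limanov invariant is preserved by every automorphism. Combined with an analysis of units and of the prime elements of $\KK[y_1,\ldots,y_m]$ that become non-prime in $B$, this pins down $\phi(y_j)$. The statement should read $\phi(y_j)=\lambda_j y_{\sigma(j)}$ for some permutation $\sigma$ and $\lambda_j\in\KK^\times$. We work under the hypotheses of Theorem~\ref{ML}, so $\mathrm{ML}(B)=\KK[y_1,\ldots,y_m]$.

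\emph{Step 1.} For $\phi\in\Aut(B)$ and $\delta\in\mathrm{LND}(B)$, the map $\phi^{-1}\delta\phi$ is again an LND, and $\Ker(\phi^{-1}\delta\phi)=\phi^{-1}(\Ker\delta)$. Hence $\phi(\mathrm{ML}(B))=\mathrm{ML}(B)$. By Theorem~\ref{ML}, $\phi$ therefore restricts to an automorphism $\psi$ of the polynomial ring $P:=\KK[y_1,\ldots,y_m]$.

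\emph{Step 2.} We characterize the elements $y_j$ intrinsically inside $B$. The claim is that, up to scalars, $y_1,\ldots,y_m$ are exactly the irreducible elements $p\in P$ for which $pB\cap P\neq pP$ fails to be the ``expected'' ideal, i.e. the $p$ for which $B/pB$ is not an integral domain; equivalently, the $p$ over which the fibre of $\operatorname{Spec}B\to\operatorname{Spec}P$ is degenerate. Concretely:
\begin{itemize}
\item If $p$ is coprime to $y_1\cdots y_m$, then $B[p^{-1}]$ and $B/pB$ behave like $P[z]$ localized/reduced. Using the relations $x_1=f_1/y^{k_1}$ and $x_2=f_2/y^{k_2}$ after inverting $y_1\cdots y_m$, one gets $B/pB\cong (P/p)[z]$, which is a domain.
\item For $p=y_j$, the computation in the proof of Lemma~\ref{id2} gives $B/y_jB\cong \bigl(\KK[Z,Y_{\hat j}]/f_1(Z,Y)|_{Y_j=0}\bigr)[X_1,\ldots]$, modulo further relations. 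Since $f_1$ is monic in $Z$ of degree $r$ and we need a zero divisor, case 2) of (\ref{star}) gives $r\ge 2$ with $k_{1j}\geq 2$. In case 1), the analogous argument uses $f_2$ with $s\ge2$, where the relation $x_1^s=\ldots$ appears modulo $y_j$.
\end{itemize}
A cleaner invariant that I would try first is: $y_j$ divides, in $B$, elements of $B\setminus P$ non-trivially, i.e. $y_jB\cap P\ne y_jP$ is false but $y_j$ is not prime in $B$. The alternative is to use the units and the locus where $P\to B$ is not flat-with-connected-fibres.

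\emph{Step 3.} Having shown that $\{p\in P \text{ irreducible}: B/pB \text{ is not a domain}\}$ equals $\{\lambda y_j\}$, we note that $\psi$ preserves this set, because $\phi$ carries $pB$ to $\psi(p)B$. Thus $\psi(y_j)=\lambda_j y_{\sigma(j)}$.

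The main obstacle is Step 2: proving that $B/y_jB$ is not a domain, or has nilpotents, using only the conditions (\ref{star}). For example, in case 2) the element $z$ modulo $y_j$ satisfies a monic equation $f_1(z,\ldots)=0$ of degree $r\geq2$, and with $k_{1j}\ge2$ the leading behaviour forces $z^r\in y_jB$ with $z\notin y_jB$. Also hard is checking that no other irreducible $p$ has this property, which requires care with the localization $B[(y_1\cdots y_m)^{-1}]=P[y^{-1}][z]$.
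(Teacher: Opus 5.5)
\textbf{There is a genuine gap in Step 2, and Step 3 fails with it.} Your key claim is that, up to scalars, the $y_j$ are exactly the irreducible $p\in\KK[y_1,\ldots,y_m]$ with $B/pB$ not a domain. This is false for $m\ge2$.

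Take $m=2$, all $k_{ij}=2$, $f_1=Z^2+Y_2$, $f_2=X_1^2+Z$; both conditions (\ref{star}) hold. Since every $k_{ij}\ge1$,
\[
B/y_1B\cong\KK[Z,Y_2,X_1,X_2]/(Z^2+Y_2,\ X_1^2+Z)\cong\KK[X_1,X_2],
\]
so $y_1$ is prime in $B$. By contrast, $B/y_2B\cong\KK[Z,Y_1,X_1,X_2]/(Z^2,\ X_1^2+Z)$ has nilpotents. Your invariant therefore detects only $y_2$. Step 3 gives just $\psi(y_2)\in\KK^\times y_2$ and leaves $\psi(y_1)=ay_1+b(y_2)$ unconstrained.

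Your heuristic ``$z^r\in y_jB$ with $z\notin y_jB$'' is where this goes wrong. It holds in the associated graded algebra $C$, where $f_1=Z^r$. In $B$ only $f_1(z,y)\in y_jB$, and $\KK[Y_i : i\ne j][Z,X_1]/(f_1|_{Y_j=0},f_2|_{Y_j=0})$ may well be a domain. So the ``equivalently'' in Step 2 is false: degenerate fibres over the divisor $\{y_j=0\}$ do not force $B/y_jB$ to fail to be a domain.

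Your first bullet also has an error. The isomorphism $B/pB\cong(P/p)[z]$ is wrong in general; for $p=y_1+y_2$ the $y_j$ are not units modulo $p$. Nevertheless $B/pB$ is a domain there, by the argument of Lemma~\ref{id2} carried out over $P/(p)$.

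\textbf{How to fix it.} Use closed fibres rather than height-one primes.
\begin{enumerate}
\item Because $\phi(P)=P$, for every maximal ideal $\mathfrak m\subset P$ the automorphism $\phi$ induces $B/\mathfrak mB\cong B/\psi(\mathfrak m)B$.
\item Over $c\in\KK^m$ with $c_1\cdots c_m\ne0$ the fibre is $\KK[z]$.
\item If some $c_j=0$, the fibre is $\bigl(\KK[Z,X_1]/(f_1(Z,c),f_2(Z,c,X_1))\bigr)[X_2]$. Its coefficient algebra has $\KK$-dimension $rs\ge2$ under (\ref{star}), so this fibre is not a domain.
\item Hence $\psi$ maps $V(y_1\cdots y_m)$ onto itself. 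Since $\psi(y_1\cdots y_m)$ is squarefree, the Nullstellensatz gives $\psi(y_1\cdots y_m)\in\KK^\times y_1\cdots y_m$.
\item Unique factorization then yields $\psi(y_j)=\lambda_jy_{\sigma(j)}$.
\end{enumerate}

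Repaired this way, your route genuinely differs from the paper's. The paper passes to the graded algebra $C$, uses its semi-rigidity and the $\deg_\delta$-filtration to reach the Danielewski algebra $\KK[U_r]$, and then imports Lemmas~6.2 and~7.8 of~[G]. Your repaired argument is self-contained and needs only Theorem~\ref{ML} and $rs\ge2$.
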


\begin{proof}
    Let $C$ be a graded subalgebra of the algebra $\widehat{A}$ as in Lemma~\ref{main}. We have $\mathrm{ML}(C) = \KK[\overline y_1, \ldots, \overline y_m]$, since $C$ is a particular case of the algebra $B$. By Lemma~\ref{lnd}, there is an LND $\delta$ of $C$ such that $\Ker(\delta) = \KK[\overline y_1, \ldots, \overline y_m]$. Hence, $C$ is a semi-rigid algebra.
    
    Consider sets $U_i = \{f \in C \; | \; \deg_{\delta}(f) \leq i\}$. Then $C = \cup_i U_i$. Put $P := \KK[U_r]$. It is a subalgebra of $C$. One can see that $$\deg_{\delta} (\overline y_1)= \ldots = \deg_{\delta} (\overline y_m) = 0, \ \deg_{\delta} (\overline z) = 1, \ \deg_{\delta} (\overline x_1) = r, \ \deg_{\delta} (\overline x_2) = rs.$$ Then $P = \frac{\KK[Y_1, \ldots, Y_m, X_1, Z]}{X_1Y_1^{k_{11}}Y_2^{k_{12}}\ldots Y_m^{k_{1m}} - Z^r}$. Note that any algebra automorphism sends any LND of an algebra to an LND. The filtration $U_i$ is the same for all LNDs of $C$ as $C$ is semi-rigid. Hence, $P$ is invariant under the action of all $\xi \in \Aut(C)$. By~\cite[Lemma 6.2]{G}, we have the following relation for all $\psi \in \Aut(P)$:
    $$\psi(\overline y_j) = \mu_j \overline y_i, \; \; \mu \in \KK^{\times},  \; \; \overline y_i , \overline y_j \in P.$$
    Thus, for any $\xi \in \Aut(C)$:
    $$\xi(\overline y_j) = \eta_j \overline y_i, \; \; \eta \in \KK^{\times}.$$
    Then we have the same relation for any $\phi \in \Aut(B)$ by~\cite[Lemma 7.8]{G}:
    $$\phi(y_j) = \lambda_j y_i, \; \; \lambda \in \KK^{\times}.$$
\end{proof}

\section{Isomorphism classes}

Consider two algebras defining multi Danielewski varieties:

$$B_1 := \frac{A}{(X_1Y_1^{k_{11}}\ldots Y_{m}^{k_{1m}} - f_1(Z, Y_1, \ldots, Y_m), X_2Y_1^{k_{21}}\ldots Y_m^{k_{2m}} - f_2(Z, Y_1, \ldots, Y_m, X_1))},$$
$$B_2 := \frac{A}{(X_1Y_1^{p_{11}}\ldots Y_m^{p_{1m}} - g_1(Z, Y_1, \ldots, Y_m), X_2Y_1^{p_{21}}\ldots Y_m^{p_{2m}} - g_2(Z, Y_1, \ldots, Y_m, X_1))},$$
where $k_{ij}, p_{ij} \in \NN$, polynomials $f_1(Z, Y_1, \ldots, Y_m), g_1(Z, Y_1, \ldots, Y_m) \in \KK[Z, Y_1, \ldots, Y_m]$ are monic in $Z$; polynomials $f_2(Z, Y_1, \ldots, Y_m, X_1), g_2(Z, Y_1, \ldots, Y_m, X_1) \in \KK[Z, Y_1, \ldots, Y_m, X_1]$ are monic in $X_1$. Set $$\deg_Zf_1(Z, Y_1, \ldots, Y_m) := r_1, \deg_{X_1}f_2(Z, Y_1, \ldots, Y_m, X_1) := s_1,$$ $$\deg_Zg_1(Z, Y_1, \ldots, Y_m) := r_2, \deg_{X_1}g_2(Z, Y_1, \ldots, Y_m, X_1) := s_2.$$ 
The images of $Z, Y_1, \ldots, Y_m, X_1, X_2$ will be respectively denoted by $z, y_1, \ldots, y_m$, $ x_1, x_2$ in $B_1$ and by $\widehat z, \widehat y_1, \ldots, \widehat y_m, \widehat x_1, \widehat x_2$ in $B_2$.

\begin{theorem}\label{main1}
    Let one of the conditions~(\ref{star}) holds for every parameter $s_i, r_i, k_{ij}, p_{ij}$.\\
    Assume $B_1 \simeq B_2$. Then the following holds true:
    \begin{enumerate}
        \item[1)] $r_1 = r_2, \ s_1 = s_2$ and there is a permutation $\sigma$ of a set $\{1, \ldots, m\}$ such that $k_{j\sigma(i)} = p_{ji}$ for all $i, j$. Put $r := r_1 = r_2, s := s_1 = s_2, d_i := k_{1\sigma(i)} = p_{1i}, e_i := k_{2\sigma(i)} = p_{2i}$. 
        \item[2)] there are $\lambda_1, \ldots, \lambda_m, \gamma \in \KK^{\times}$ and polynomials $ \delta \in \KK[Y_1, \ldots, Y_m], f \in \KK[Z, Y_1, \ldots, Y_m]$ and $h \in \KK[Z, Y_1, \ldots, Y_m, X_1]$ such that 
        \begin{enumerate}
            \item[(i)] $g_1(\gamma Z + \delta, \lambda_1 Y_1, \ldots, \lambda_m Y_m) = \tau f_1(Z, Y_1, \ldots, Y_m) + Y_1^{d_1} \ldots Y_m^{d_m}f$, where $\tau = \gamma^r$.
            \item[(ii)] $g_2(\gamma Z + \delta, \lambda_1 Y_1, \ldots, \lambda_m Y_m, \nu X_1 + g) = \kappa f_2(Z, Y_1, \ldots, Y_m, X_1) + Y_1^{e_1} \ldots Y_m^{e_m}h$, where $\nu = \lambda_1^{-d_1} \ldots \lambda_m^{-d_m} \tau, \kappa = \nu^s$ and $g = \lambda_1^{-d_1} \ldots \lambda_m^{-d_m}f$.
        \end{enumerate}
    \end{enumerate}
    Moreover, if $\psi: B_2 \rightarrow B_1$ is an isomorphism, then
    $$\psi(\widehat y_i) = \lambda_i y_{\sigma(i)}, \;\;\; \psi(\widehat z) = \gamma z + \delta(y_1, 
    \ldots, y_m),$$
    $$\psi(\widehat x_1) = \nu x_1 + g(z, y_1, \ldots, y_m), \;\;\; \psi(\widehat x_2) = \lambda_1^{-e_1}\ldots \lambda_m^{-e_m}(\kappa x_2 + h(z, y_1, \ldots, y_m, x_1)).$$
    Conversely, if conditions 1) and 2) hold, then $B_1 \simeq B_2$.
\end{theorem}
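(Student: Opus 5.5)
The plan is to use the Makar-Limanov invariant to pin down where $\psi$ sends the $y$'s, and then to peel off the two defining equations one at a time.

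\emph{Step 1: the images of the $\widehat y_i$.} Let $\psi\colon B_2\to B_1$ be an isomorphism. It carries $\mathrm{LND}(B_2)$ onto $\mathrm{LND}(B_1)$, so by Theorem~\ref{ML} it maps $\mathrm{ML}(B_2)=\KK[\widehat y_1,\ldots,\widehat y_m]$ onto $\mathrm{ML}(B_1)=\KK[y_1,\ldots,y_m]$. The argument of Lemma~\ref{fixit}, applied to an isomorphism rather than an automorphism (via the graded algebras $C$ and the subalgebras $P$, together with \cite[Lemma 6.2, Lemma 7.8]{G}), then gives
\[
\psi(\widehat y_i)=\lambda_i y_{\sigma(i)}
\]
for some permutation $\sigma$ and some $\lambda_i\in\KK^\times$. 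After renaming the $y_i$ we may take $\sigma=\mathrm{id}$.

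\emph{Step 2: the images of $\widehat z$, $\widehat x_1$, $\widehat x_2$, and the discrete invariants.} Let $\delta_1$ be the LND of $B_1$ from Lemma~\ref{lnd}. Since $B_1$ is semi-rigid, $\psi^{-1}\delta_1\psi$ is an LND of $B_2$ with kernel $\KK[\widehat y]$. Hence $\psi$ respects the degree functions $\deg_\delta$ up to the choice of LND, and all LNDs with this kernel have the same degree function (compare \cite{F}). In $B_1$ the elements of $\deg_\delta\le 1$ are exactly $\KK[y]+\KK[y]z$. Therefore
\[
\psi(\widehat z)=a(y)z+\delta(y).
\]
Because $\psi$ is invertible, $a(y)$ must be a unit, so $a=\gamma\in\KK^\times$.

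Next use $\widehat x_1\widehat y^{p_1}=g_1(\widehat z,\widehat y)$. Applying $\psi$ gives
\[
\lambda^{p_1}y^{p_1}\psi(\widehat x_1)=g_1(\gamma z+\delta,\lambda y).
\]
Comparing $\deg_\delta$ on both sides gives $r_2=\deg_\delta\psi(\widehat x_1)$, and by symmetry $\deg_\delta\psi(\widehat x_1)=\deg_\delta x_1=r_1$. Divisibility by $y^{p_1}$ in $B_1$ is analysed with the normal form~(\ref{el1}). Writing $\psi(\widehat x_1)=\nu x_1+g$ with $g\in\KK[z,y]$, and reducing modulo $y_j$ (where $B_1/(y_j)$ is a polynomial ring in the $x$'s over $\KK[z,y]/(f_1|_{y_j=0})$), should force $p_{1j}=k_{1j}$. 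It should also yield condition (i), with $\tau=\gamma^r$ obtained by comparing leading coefficients in $z$, and $g=\lambda^{-d}f$.

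The same procedure applied to the second equation gives $s_1=s_2$, $p_{2j}=k_{2j}$, and condition (ii), together with the stated formula for $\psi(\widehat x_2)$. The exponents $k_{2j}$ are detected by the $y_j$-adic valuation of $x_2$ inside $\widehat A$, which the filtrations of Lemma~\ref{l1} make precise.

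\emph{Step 3: the converse.} Define $\psi$ by the displayed formulas. Identities (i) and (ii) show that $\psi$ maps the relations of $B_2$ into the ideal of relations of $B_1$. The inverse is built in the same way, since the substitutions are triangular and invertible, so $\psi$ is an isomorphism.

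The main obstacle is Step 2: showing that $\psi(\widehat x_1)$ has the form $\nu x_1+g$ with $g\in\KK[z,y]$, and that the exponent vectors coincide. I would first try to carry this out by working in the localization $\widehat A$, where $x_1=f_1/y^{k_1}$ and $x_2=f_2/y^{k_2}$, and comparing monomials in $y_j^{-1}$. I would use that $f_1$ is monic in $z$ and $f_2$ is monic in $x_1$ to control the most negative powers of the $y_j$.
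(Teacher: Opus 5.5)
Steps 1 and 3 are fine. Step 1 is the paper's argument, and your explicit inverse in Step 3 works as well as the paper's surjectivity-plus-dimension argument. The heart of Step~2, however, is missing.

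Everything after $\psi(\widehat z)=\gamma z+\delta$ is left unproved: $p_1=k_1$, $r_1=r_2$, identity (i), the shape $\psi(\widehat x_1)=\nu x_1+g$, and all of this again for the second equation. You \emph{write} $\psi(\widehat x_1)=\nu x_1+g$, which is exactly what has to be shown. You say that reduction modulo $y_j$ ``should force'' the exponents to agree, and then flag this as the main obstacle.

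The one concrete argument you give, for $r_1=r_2$, is not valid. You know $\deg_\delta\psi(\widehat x_1)=r_2$, but symmetry only gives $\deg_\delta\psi^{-1}(x_1)=r_1$, not $\deg_\delta\psi(\widehat x_1)=\deg_\delta x_1$.

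The missing idea, which is what the paper uses, is to compare contracted ideals in the common subring.
\begin{itemize}
\item Since $\psi(\KK[\widehat z,\widehat y])=\KK[z,y]$ and $\psi(\widehat y_i)=\lambda_iy_i$, the map $\psi$ carries $\widehat y^{a}B_2\cap\KK[\widehat z,\widehat y]$ onto $y^{a}B_1\cap\KK[z,y]$.
\item These ideals record the exponents; for example, $y^{a}B_1\cap\KK[z,y]=(y^{a},f_1)$ for $a\le k_1$ componentwise. Combined with the monicity of $f_1$ and $g_1$ in $z$, this excludes $p_1\ne k_1$.
\item The equality $(y^{d},f_1)=(y^{d},g_1(\gamma z+\delta,\lambda y))$ then gives $g_1(\gamma z+\delta,\lambda y)=\tau'f_1+y^{d}f'$.
\item Comparing $z$-degrees modulo each $y_j$ gives $r_1=r_2$ and lets you take $\tau'=\gamma^{r}$.
\item Only at that point does dividing by $\lambda^{d}y^{d}$ yield $\psi(\widehat x_1)=\nu x_1+g$, hence $\KK[z,y,x_1]=\KK[\widehat z,\widehat y,\widehat x_1]$.
\end{itemize}
The same argument one level up, with $f_2$ and $g_2$ monic in $x_1$, gives the $e_i$, $s_1=s_2$, (ii) and $\psi(\widehat x_2)$.

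Your derivation of $\psi(\widehat z)=\gamma z+\delta$ via $\deg_\delta$ is a genuinely different route from the paper's, which uses $\KK(y)[z]=\KK(\widehat y)[\widehat z]$. Its ingredients are right: LNDs with a common kernel satisfy $aD=bE$ with $a,b$ nonzero kernel elements, so they share $\deg_\delta$. Also, $\deg_\delta b\le1$ forces $b\in\KK[y^{\pm1}]+\KK[y^{\pm1}]z$.

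But the identity $\{b:\deg_\delta b\le1\}=\KK[y]\oplus\KK[y]z$ still needs proof. One route is the freeness of $B_1/y_jB_1$ over $\KK[y_l:l\ne j]$ on the classes of $z^ix_1^lx_2^t$ with $i<r$, $l<s$. Moreover, the identity is false for $r=1$, where $x_1=(z+a(y))/y^{k_1}$ has $\deg_\delta x_1=1$.

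This is not a repairable slip. Alternative 1) of (*) allows $r=1$, and then the statement itself fails. The systems $\{x_1y_1y_2=z,\ x_2y_1^2y_2^2=x_1^2\}$ and $\{x_1y_1^3y_2^3=z,\ x_2y_1^2y_2^2=x_1^2\}$ both define $\KK[y_1,y_2,x_1,x_2]/(x_2y_1^2y_2^2-x_1^2)$, but their first exponent vectors differ. The obvious isomorphism from the second algebra to the first sends $\widehat z$ to $y_1^2y_2^2z$, so the paper's deduction of $\gamma\in\KK^{\times}$ breaks here as well.

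Likewise $s=1$ breaks the statement under alternative 2). The systems $\{x_1y_1^2y_2^2=z^2,\ x_2y_1^3y_2^3=x_1\}$ and $\{x_1y_1^3y_2^3=z^2,\ x_2y_1^2y_2^2=x_1\}$ both give $\KK[z,y_1,y_2,x_2]/(x_2y_1^5y_2^5-z^2)$.

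So a correct version of your argument has to assume $r\ge2$ and $s\ge2$. These hypotheses are used precisely when identifying the elements with $\deg_\delta\le1$ and when computing the contracted ideals.
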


\begin{proof}
Let $\psi: B_1 \rightarrow B_2$ be a $\KK$-algebra isomorphism. Replacing $B_1$ with its image, one can assume that $B_1 = B_2 = B$. By Theorem~\ref{ML}, $\mathrm{ML}(B) = \KK[y_1, \ldots, y_m] = \KK[\widehat y_1, \ldots, \widehat y_m]$. Hence, by Lemma~\ref{fixit}:
\begin{equation}\label{7}
\widehat y_i = \lambda_i y_{\sigma (i)},
\end{equation}
where $\lambda_i \in \KK^{\times}, \sigma$ is a permutation of the set $\{1, \ldots, m\}$.

Moreover, $\KK (y_1, \ldots, y_m)[z] = \KK (\widehat y_1, \ldots, \widehat y_m)[\widehat z]$. As $B \cap \KK(y_1, \ldots, y_m) = \KK[y_1, \ldots, y_m]$, we have: $\widehat z = \gamma z + \delta$ for some $\gamma, \delta \in \KK[y_1, \ldots, y_m]$. Analogous relation holds for $z$ by symmetry, thus we obtain:
\begin{equation}\label{5}
    \widehat z = \gamma z + \delta(y_1, \ldots, y_m),
\end{equation}
for $\gamma \in \KK^{\times}, \delta(y_1, \ldots, y_m) \in \KK[y_1, \ldots, y_m]$. Hence,
\begin{equation}\label{6}
    \KK[z, y_1, \ldots, y_m] = \KK[ \widehat z, \widehat y_1, \ldots, \widehat y_m].
\end{equation}
Now, show that for the permutation $\sigma$ we have $\widehat y_i = \lambda_i y_{\sigma (i)}$ if and only if $p_{ji} = k_{j\sigma(i)}$ for $j = 1, 2$. Suppose that for some $i$: $p_{1i} < k_{1\sigma(i)}$. Using the relations~(\ref{6}) and~(\ref{7}), obtain: 
$$y_{\sigma(1)}^{k_{1\sigma (1)}}\ldots y_{\sigma(m)}^{k_{1\sigma(m)}}B \cap \KK[z, y_1, \ldots, y_m] = \widehat y_1^{k_{1\sigma(1)}}\ldots \widehat y_m^{k_{1\sigma(m)}}B \cap \KK[ \widehat z, \widehat y_1, \ldots, \widehat y_m],$$
i.e.
\begin{multline*}
    (y_1^{k_{11}}\ldots y_m^{k_{1m}}, f_1(z, y_1, \ldots, y_m))\KK[z, y_1, \ldots, y_m] = \\ = (\widehat y_1^{k_{1\sigma(1)}}\ldots \widehat y_m^{k_{1\sigma(m)}}, \widehat y_1^{k_{1\sigma(1)} - p_{11}}\ldots \widehat y_m^{k_{1\sigma(m)}-p_{1m}}g_1(\widehat z, \widehat y_1, \ldots, \widehat y_m))\KK[ \widehat z, \widehat y_1, \ldots, \widehat y_m].
\end{multline*}
Therefore,
\begin{multline*}
    f_1(z, y_1, \ldots, y_m) \in \\ \in (\widehat y_1^{k_{1\sigma(1)}}\ldots \widehat y_m^{k_{1\sigma(m)}}, \widehat y_1^{k_{1\sigma(1)} - p_{11}}\ldots \widehat y_m^{k_{1\sigma(m)}-p_{1m}}g_1(\widehat z, \widehat y_1, \ldots, \widehat y_m))\KK[ \widehat z, \widehat y_1, \ldots, \widehat y_m] \subseteq \\ \subseteq y_{\sigma(i)}^{k_{1\sigma(i)}}\KK(y_{\sigma(1)}, 
    \ldots y_{\sigma(i-1)}, y_{\sigma(i+1)}, \ldots, y_{\sigma(m)} )[z, y_{\sigma(i)}],
\end{multline*}
which contradicts the fact that $f_1(Z, Y_1, \ldots, Y_m)$ is monic in $Z$. Hence, $p_{1i} \geq k_{1\sigma(i)}$ for all $i$. And, by symmetry, we have:
$$d_i := p_{1i} = k_{1\sigma(i)}.$$
Thus,
\begin{multline*}
    (y_{\sigma(1)}^{d_1}\ldots y_{\sigma(m)}^{d_m}, f_1(z, y_1, \ldots, y_m))\KK[z, y_1, \ldots, y_m] = \\ = (\widehat y_1^{d_1}\ldots \widehat y_m^{d_m}, g_1(\widehat z, \widehat y_1, \ldots, \widehat y_m))\KK[ \widehat z, \widehat y_1, \ldots, \widehat y_m].
\end{multline*}
Then $g_1(\widehat z, \widehat y_1, \ldots, \widehat y_m) = \tau'f_1(z, y_1, \ldots, y_m) + y_{\sigma(1)}^{d_1}\ldots y_{\sigma(m)}^{d_m}f'$ for some polynomials $\tau', f' \in \KK[z, y_1, \ldots, y_m]$. Since $g_1(Z, Y_1, \ldots, Y_m)$ and $f_1(Z, Y_1, \ldots, Y_m)$ are monic in $Z$, then, using the relations~(\ref{5}) and~(\ref{7}) and symmetry, we have:
$$r := r_1 = r_2$$
and $\tau' \equiv \gamma^r \; \mathrm{mod} \; y_{\sigma(1)}^{d_1}\ldots y_{\sigma(m)}^{d_m}\KK[z, y_1, \ldots, y_m]$. Let $\tau := \gamma^r$. Replacing $\tau'$ with $\tau$, we obtain:
\begin{equation*}
    g_1(\widehat z, \widehat y_1, \ldots, \widehat y_m) = \tau f_1(z, y_1, \ldots, y_m) + y_{\sigma(1)}^{d_1}\ldots y_{\sigma(m)}^{d_m}f(z, y_1, \ldots, y_m)
\end{equation*}
for some $f(z, y_1, \ldots, y_m) \in \KK[z, y_1, \ldots, y_m]$.
So for $\widehat x_1$ we have:
\begin{multline}\label{10}
    \widehat x_1 = \frac{g_1(\widehat z, \widehat y_1, \ldots, \widehat y_m)}{\widehat y_1^{d_1}\ldots \widehat y_m^{d_m}} = \frac{\tau f_1(z, y_1, \ldots, y_m) + y_{\sigma(1)}^{d_1}\ldots y_{\sigma(m)}^{d_m}f(z, y_1, \ldots, y_m)}{(\lambda_1 y_{\sigma(1)})^{d_1}\ldots (\lambda_m y_{\sigma(m)})^{d_m}} = \\ = \nu x_1 + g(z, y_1, \ldots, y_m),
\end{multline}
where $\nu = \lambda_1^{-d_1} \ldots \lambda_m^{-d_m} \tau \in \KK^{\times}$ and $g = \lambda_1^{-d_1} \ldots \lambda_m^{-d_m}f \in \KK[z, y_1, \ldots, y_m]$. Hence,
\begin{equation}\label{11}
    \KK[z, y_1, \ldots, y_m, x_1] = \KK[ \widehat z, \widehat y_1, \ldots, \widehat y_m, \widehat x_1].
\end{equation}
Now assume that for some $i$: $p_{2i} < k_{2\sigma(i)}$. Using~(\ref{7}) and~(\ref{11}), we obtain:
$$y_{\sigma(1)}^{k_{2\sigma (1)}}\ldots y_{\sigma(m)}^{k_{2\sigma(m)}}B \cap \KK[z, y_1, \ldots, y_m, x_1] = \widehat y_1^{k_{2\sigma(1)}}\ldots \widehat y_m^{k_{2\sigma(m)}}B \cap \KK[ \widehat z, \widehat y_1, \ldots, \widehat y_m, \widehat x_1],$$
i.e.
\begin{multline*}
    (y_1^{k_{21}}\ldots y_m^{k_{2m}}, f_2(z, y_1, \ldots, y_m, x_1))\KK[z, y_1, \ldots, y_m, x_1] = \\ = (\widehat y_1^{k_{2\sigma(1)}}\ldots \widehat y_m^{k_{2\sigma(m)}}, \widehat y_1^{k_{2\sigma(1)} - p_{21}}\ldots \widehat y_m^{k_{2\sigma(m)}-p_{2m}}g_2(\widehat z, \widehat y_1, \ldots, \widehat y_m, \widehat x_1))\KK[ \widehat z, \widehat y_1, \ldots, \widehat y_m, \widehat x_1].
\end{multline*}
Thus,
\begin{multline*}
    f_2(z, y_1, \ldots, y_m, x_1) \in \\ \in (\widehat y_1^{k_{2\sigma(1)}}\ldots \widehat y_m^{k_{2\sigma(m)}}, \widehat y_1^{k_{2\sigma(1)} - p_{21}}\ldots \widehat y_m^{k_{2\sigma(m)}-p_{2m}}g_2(\widehat z, \widehat y_1, \ldots, \widehat y_m, \widehat x_1))\KK[ \widehat z, \widehat y_1, \ldots, \widehat y_m, \widehat x_1] \subseteq \\ \subseteq y_{\sigma(i)}^{k_{2\sigma(i)}}\KK(y_{\sigma(1)}, 
    \ldots y_{\sigma(i-1)}, y_{\sigma(i+1)}, \ldots, y_{\sigma(m)} )[z, y_{\sigma(i)}],
\end{multline*}
which contradicts the fact that $f_2(Z, Y_1, \ldots, Y_m, X_1)$ is monic in $X_1$. So $p_{2i} \geq k_{2\sigma(i)}$ for all $i$. The converse inequality can be obtained in the similar way. Thus we have:
$$e_i := p_{2i} = k_{2\sigma(i)}.$$
Hence,
\begin{multline*}
    (y_{\sigma(1)}^{e_1}\ldots y_{\sigma(m)}^{e_m}, f_2(z, y_1, \ldots, y_m, x_1))\KK[z, y_1, \ldots, y_m, x_1] = \\ = (\widehat y_1^{d_1}\ldots \widehat y_m^{d_m}, g_2(\widehat z, \widehat y_1, \ldots, \widehat y_m, \widehat x_1))\KK[ \widehat z, \widehat y_1, \ldots, \widehat y_m, \widehat x_1].
\end{multline*}
Then $g_2(\widehat z, \widehat y_1, \ldots, \widehat y_m, \widehat x_1) = \kappa'f_1(z, y_1, \ldots, y_m, x_1) + y_{\sigma(1)}^{d_1}\ldots y_{\sigma(m)}^{d_m}h'$ for some polynomials $\kappa', h' \in \KK[z, y_1, \ldots, y_m, x_1]$.
Since $g_2(Z, Y_1, \ldots, Y_m, X_1)$ and $f_2(Z, Y_1, \ldots, Y_m, X_1)$ are monic in $X_1$, then, using the relations~(\ref{5}) and~(\ref{10}) and symmetry, we have:
$$s := s_1 = s_2$$
and $\kappa' \equiv \nu^r \; \mathrm{mod} \; y_{\sigma(1)}^{e_1}\ldots y_{\sigma(m)}^{e_m}\KK[z, y_1, \ldots, y_m, x_1]$. Let $\kappa := \nu^r$. Replacing $\kappa'$ with $\kappa$, we obtain:
\begin{equation*}
    g_2(\widehat z, \widehat y_1, \ldots, \widehat y_m, \widehat x_1) = \kappa f_2(z, y_1, \ldots, y_m, x_1) + y_{\sigma(1)}^{e_1}\ldots y_{\sigma(m)}^{e_m}h(z, y_1, \ldots, y_m, x_1)
\end{equation*}
for some $h(z, y_1, \ldots, y_m, x_1) \in \KK[z, y_1, \ldots, y_m, x_1]$.
So for $\widehat x_2$ we have:
\begin{multline*}
    \widehat x_2 = \frac{g_2(\widehat z, \widehat y_1, \ldots, \widehat y_m, \widehat x_1)}{\widehat y_1^{e_1}\ldots \widehat y_m^{e_m}} = \frac{\kappa f_2(z, y_1, \ldots, y_m, x_1) + y_{\sigma(1)}^{e_1}\ldots y_{\sigma(m)}^{e_m}h(z, y_1, \ldots, y_m, x_1)}{(\lambda_1 y_{\sigma(1)})^{d_1}\ldots (\lambda_m y_{\sigma(m)})^{d_m}} = \\ = \frac{\kappa x_2 + h(z, y_1, \ldots, y_m, x_1)}{\lambda_1^{e_1}\ldots \lambda_m^{e_m}}.
\end{multline*}
Conversely, suppose that conditions 1) and 2) hold. Consider the $\KK$-algebra map $\phi : A \rightarrow B_1$ defined by:
$$\phi(Y_i) = \lambda_i y_i, \;\;\; \phi(Z) = \gamma z + \delta(y_1, \ldots, y_m),$$
$$\phi(X_1) = \nu x_1 + g(z, y_1, \ldots, y_m), \;\;\; \phi(X_2) = \lambda_1^{-e_1}\ldots \lambda_m^{-e_m}(\kappa x_2 + h(z, y_1, \ldots, y_m, x_1)).$$
Clearly,
$$\phi(X_1Y_1^{d_1} \ldots Y_m^{d_m} - g_1(Z, Y_1, \ldots, Y_m)) = \phi(X_2Y_1^{e_1} \ldots Y_m^{e_m} - g_2(Z, Y_1, \ldots, Y_m, X_1)) = 0.$$
Thus, $\phi$ induced a surjective $\KK$-linear map $\overline \phi: B_2 \rightarrow B_1$. The map $\overline \phi$ is an isomorphism since $\dim B_1 = \dim B_2$.
\end{proof}

\section{General case}\label{Gc}

Now let $A_n$ be an algebra of the following form:
$$A_n := \KK[Z, Y_1, Y_2, \ldots, Y_m, X_1, \ldots X_n]$$
and consider a system of equations:
\begin{equation}\label{system}
\begin{cases} 
  X_1Y_1^{k_{11}}\ldots Y_m^{k_{1m}} - f_1(Z, Y_1, \ldots, Y_m) = 0,\\
  X_2Y_1^{k_{21}}\ldots Y_m^{k_{2m}} - f_2(Z, Y_1, \ldots, Y_m, X_1) = 0,\\
      \; \;\; \;\; \;\; \;\; \;\; \;\; \;\; \;\; \;\; \;\; \;\; \;\ldots\\
  X_nY_1^{k_{n1}}\ldots Y_m^{k_{nm}} - f_n(Z, Y_1, \ldots, Y_m, X_1, \ldots, X_{n-1}) = 0,
\end{cases}
\end{equation}
where $k_{ij} \in \NN$, a polynomial $f_1(Z, Y_1, \ldots, Y_m) \in \KK[Z, Y_1, \ldots, Y_m]$ is monic in $Z$; a polynomial $f_l(Z, Y_1, \ldots, Y_m,X_1, \ldots, X_{l-1}) \in \KK[Z, Y_1, \ldots, Y_m, X_1, \ldots, X_{l-1}]$ is monic in $X_{l-1}$ for $2 \leq l \leq n$. Let $I \subseteq A_n$ denote the ideal that is generated by the left-hand sides of the system~(\ref{system}). Put $B := A_n/I$.
The images of $Z, Y_1, \ldots, Y_m, X_1, \ldots, X_n$ in $B$ will be respectively denoted by $z, y_1, \ldots, y_m, x_1, \ldots, x_n$.

Set $\deg_Zf_1(Z, Y_1, \ldots, Y_m) := s_0, \; \deg_{X_{l-1}}f_l(Z, Y_1, \ldots, Y_m, X_1, \ldots X_{l-1}) := s_{l-1}$ for $2 \leq l \leq n$. We prove the generalisation of Theorem~\ref{main1} in the same manner as in previous Sections.

\begin{lemma}\label{41}
    $B$ is an integral domain.
\end{lemma}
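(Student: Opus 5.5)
I will argue by induction on $n$, mirroring the proof of Lemma~\ref{id2}. For $1 \le l \le n$ put
$$B^{(l)} := \KK[Z, Y_1, \ldots, Y_m, X_1, \ldots, X_l]/I_l,$$
where $I_l$ is generated by the first $l$ equations of the system~(\ref{system}). We also set $B^{(0)} := \KK[Z, Y_1, \ldots, Y_m]$. Then $B = B^{(n)}$, and
$$B^{(l)} \simeq B^{(l-1)}[X_l]\big/\bigl(X_l y_1^{k_{l1}}\cdots y_m^{k_{lm}} - f_l\bigr).$$
The base case $l=1$ is the Danielewski variety ring, which is a domain by~\cite[Lemma 3.3]{GS}; alternatively it follows from Lemma~\ref{id} applied with $R=B^{(0)}$. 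The case $l=2$ is exactly Lemma~\ref{id2}.

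For the inductive step, assume $B^{(l-1)}$ is a domain. I want to apply Lemma~\ref{id} with $R = B^{(l-1)}$, $f = f_l(z, y, x_1, \ldots, x_{l-1})$, and the elements $y_1^{k_{l1}}, \ldots, y_m^{k_{lm}}$. Since a product of non-zero-divisors is a non-zero-divisor, it suffices to show that each $y_j$ is not a zero-divisor on $R/(f_l)$. By~\cite[Lemma~3.1]{GS} (used as in Lemma~\ref{id2}), this is equivalent to $y_j$ being a non-zero-divisor on $R$, which holds because $R$ is a domain and $y_j\neq 0$, together with $f_l$ being a non-zero-divisor on $R/(y_j)$.

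The key step is therefore to describe $R/(y_j) = B^{(l-1)}/(y_j)$. Setting $Y_j = 0$ in every equation kills the left-hand sides $X_i Y^{k_i}$, because all $k_{ij}\ge 1$. So each relation becomes $f_i(Z, Y|_{Y_j=0}, X_1, \ldots, X_{i-1}) = 0$, with $X_i$ free for $i \geq 1$ except through later relations. Concretely, I expect
$$B^{(l-1)}/(y_j) \simeq \KK[Z, Y_{\hat j}, X_1, \ldots, X_{l-1}]\big/\bigl(f_1|_{Y_j=0},\, f_2|_{Y_j=0},\, \ldots,\, f_{l-1}|_{Y_j=0}\bigr).$$
Here $f_1$ is monic in $Z$ and each $f_i$ is monic in $X_{i-1}$. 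This makes the ring a free module over $\KK[Y_{\hat j}, X_{l-1}]$ with basis $z^{a_0}x_1^{a_1}\cdots x_{l-2}^{a_{l-2}}$, where $a_0<s_0$ and $a_i<s_i$, since the relations form a triangular monic system. In particular $X_{l-1}$ remains a polynomial variable over the finite free extension $S := \KK[Z, Y_{\hat j}, X_1, \ldots, X_{l-2}]/(f_1, \ldots, f_{l-2})|_{Y_j=0}$ modulo $f_{l-1}$. To be careful, $f_{l-1}$ involves $X_{l-2}$ but not $X_{l-1}$, so the quotient is $S'[X_{l-1}]$ with $S'$ a free $\KK[Y_{\hat j}]$-module.

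Now $f_l$ is monic in $X_{l-1}$ of degree $s_{l-1}$ over $S'$. A monic polynomial is a non-zero-divisor in $S'[X_{l-1}]$, since its leading coefficient is a unit. Hence $f_l$ is a non-zero-divisor on $R/(y_j)$, and Lemma~\ref{id} gives that $B^{(l)}$ is a domain.

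The main obstacle is making the identification of $B^{(l-1)}/(y_j)$ rigorous, in particular verifying that no extra relations arise. I would handle this by noting that $I_{l-1} + (Y_j)$ equals $(Y_j, f_1|_{Y_j=0}, \ldots, f_{l-1}|_{Y_j=0})$ directly from the generators. This is immediate, since each generator differs from $-f_i|_{Y_j=0}$ by a multiple of $Y_j$. It also yields the polynomial structure in $X_{l-1}$, because $X_{l-1}$ appears in none of these generators.
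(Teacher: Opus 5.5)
Your proposal is correct and follows essentially the same route as the paper. Both argue by induction on the number of equations, identify $R/(y_j)$ with a polynomial ring in $X_{l-1}$ over the quotient by the $f_i|_{Y_j=0}$, use the monicity of $f_l$ in $X_{l-1}$ together with \cite[Lemma~3.1]{GS}, and conclude with Lemma~\ref{id}. Your explicit identity $I_{l-1}+(Y_j)=(Y_j, f_1|_{Y_j=0},\ldots,f_{l-1}|_{Y_j=0})$ fills in a step the paper leaves implicit, and you correctly invoke the inductive hypothesis where the paper writes ``by the base case''.
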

\begin{proof}
    Prove by induction on $n$. The base case ($n = 2$) was proved in Lemma~\ref{id2}.

    Inductive step: let $J$ be an ideal generated by the left-hand sides of the first $n-1$ equations of the system~(\ref{system}); $R :=A_{n-1}/J$. $R$ is an integral domain by the base case. Identifying the images of $Z, Y_1, \ldots, Y_m, X_1, \ldots, X_{n-1} \in R$ with $z, y_1, \ldots, y_m, x_1, \ldots, x_{n-1}$ in $B$, we can assume that $R$ is a subring of $B$. Hence $$B = R[X_n]/(X_ny_1^{k_{21}}\ldots y_m^{k_{2m}} - f_n(z, y_1, \ldots, y_m, x_1, \ldots, x_{n-1})).$$
    Moreover, 
    \begin{equation*}
        R/(y_1) \simeq \left(\frac{\KK[Z, Y_2, \ldots, Y_m, X_1, \ldots, X_{n-2}]}{f_1(Z, 0, Y_2, \ldots Y_m), \ldots, f_{n-1}(Z, 0, Y_2 \ldots Y_m, X_1, \ldots, X_{n-2})}\right)[X_{n-1}].
    \end{equation*}
    Since a polynomial $f_n(Z, Y_1, \ldots, Y_m, X_1, \ldots, X_{n-1})$ is monic in $X_{n-1}$, it follows that its image in $R/(y_1)$ is not a zero-divisor on $R/(y_1)$. Hence, $y_1^{k_{n1}}$ is not a zero-divisor on $R/(f_n(z, y_1, \ldots, y_m, x_1, \ldots, x_{n-1}))$ by~\cite[Lemma~3.1]{GS}. Proceeding in the same manner, we obtain the fact that $y_2^{k_{n2}}, \ldots, y_m^{k_{nm}}$ are not zero-divisors on $R/(f_n(z, y_1, \ldots, y_m, x_1, \ldots, x_{n-1}))$. Hence, $B$ is an integral domain by Lemma~\ref{id}.
\end{proof}

\begin{lemma}\label{what}
    Put $D(1) := B$ and
    \begin{multline*}
        D(j) := A_n/(X_1Y_1^{k_{11}}\ldots Y_m^{k_{1m}} - f_1(Z, 0, \ldots, 0, Y_{j}, \ldots, Y_m), \\X_2Y_1^{k_{21}}\ldots Y_m^{k_{2m}} - X_1^{s_1}, \ldots, X_nY_1^{k_{n1}}\ldots Y_m^{k_{nm}} - X_{n-1}^{s_{n-1}}), \;\; \text{if} \;\;j > 1.
    \end{multline*}
     Consider $D(j)$, for every $j$, as a subalgebra of $\ZZ^m$-graded algebra $\hat{A} = \oplus_{i\in \ZZ^m}\KK[z]y^i$, where $y^i = y_1^{i_1}\cdots y_m^{i_m}$ and $i_k \in \ZZ$.

    Define $\ZZ-$filtration $\{D(j)_k\}$ on $D(j)$ as follows:
    $$D(j)_k := D(j) \cap (\oplus_{i \geq -k}\KK[z, y_1, y_1^{-1}, \ldots, y_{j-1}, y^{-1}_{j-1}, y_{j+1}, y^{-1}_{j+1}, \ldots y_m, y_m^{-1}]y_j^i).$$
     
    This filtration is admissible and the corresponding graded ring $\gr(D(j)) \simeq D(j+1)$.
\end{lemma}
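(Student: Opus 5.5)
I would follow the proof of Lemma~\ref{l1} essentially verbatim, now with $n$ variables $x_1,\ldots,x_n$. Embed each $D(j)$ into $\widehat A$, using Lemma~\ref{41} for $D(1)=B$ and the same argument for $j>1$ (the relations $X_lY^{k_l}-X_{l-1}^{s_{l-1}}$ are monic in $X_{l-1}$). Since $B$ is a domain and the $y_i$ are nonzerodivisors, we have $x_l=f_l/y^{k_l}$ in $\widehat A$. Then compute the filtration degrees of the generators recursively: $y_j\in D(j)_{-1}\setminus D(j)_{-2}$, while $z$ and the $y_i$ with $i\neq j$ lie in $D(j)_0\setminus D(j)_{-1}$. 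Next, $x_1\in D(j)_{a_1}\setminus D(j)_{a_1-1}$ with $a_1=k_{1j}$, and inductively $x_l$ has degree $a_l=s_{l-1}a_{l-1}+k_{lj}$. Here one uses that $f_l$ is monic in $X_{l-1}$, so its top $y_j$-pole part is $x_{l-1}^{s_{l-1}}$. For this to make sense one needs $a_{l-1}>0$ so that this term dominates, which holds since $k_{1j}\ge 1$.

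Admissibility: as in~(\ref{el1}), I would use the relations to reduce any element to a normal form. This is a sum of monomials $z^a y^{l} x_1^{t_1}\cdots x_n^{t_n}$ with $t_{l}<s_l$ for $l\le n-1$ and with the $y$-exponents restricted so that no further substitution $x_ly^{k_l}\mapsto f_l$ applies. The key claim is that these normal-form monomials have distinct leading terms in $\widehat A$, so there is no cancellation of top $y_j$-degree. Hence the degree of the sum is the maximum of the degrees of the monomials, which gives admissibility with $\Gamma=\{z,y_1,\ldots,y_m,x_1,\ldots,x_n\}$. I expect this bookkeeping to be the main obstacle. I would handle it by induction on $n$, viewing $B=R[X_n]/(X_ny^{k_n}-f_n)$ as in Lemma~\ref{41} and treating the new variable $x_n$ exactly as $x_2$ was treated over the ring with the single variable $x_1$ in Lemma~\ref{l1}.

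Next, verify the relations in $\gr$. The difference $x_1y^{k_1}-f_1(z,0,\ldots,0,y_{j+1},\ldots,y_m)$ equals $f_1(\ldots,y_j,\ldots)-f_1(\ldots,0,y_{j+1},\ldots)$, which is divisible by $y_j$ and so lies in $D(j)_{-1}$; hence its image vanishes. For $l\ge2$, both $x_ly^{k_l}$ and $x_{l-1}^{s_{l-1}}$ lie in $D(j)_{s_{l-1}a_{l-1}}$, and their difference $f_l-x_{l-1}^{s_{l-1}}$ has strictly lower degree. This follows from monicity in $X_{l-1}$ and from the degrees computed above: the other monomials involve lower powers of $x_{l-1}$ and only the $x_i$ with $i<l-1$, which have smaller $a_i$. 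For $j>1$ this difference is already zero. Thus $\gr(D(j))$ is generated by the images of the generators and satisfies the defining relations of $D(j+1)$, so it is a quotient of $D(j+1)$.

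Finally, compare dimensions. The images of $z,y_1,\ldots,y_m$ are algebraically independent in $\gr(\widehat A)\simeq\widehat A$, so $\dim\gr(D(j))\ge m+1$. On the other hand, $D(j+1)$ is a domain of dimension $m+1$ by Lemma~\ref{41} applied to its defining system, which is of the same type. A surjection from a domain onto a ring of the same dimension is an isomorphism, so $\gr(D(j))\simeq D(j+1)$.
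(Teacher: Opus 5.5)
You follow the same route as the paper, whose proof here is just ``the same as Lemma~\ref{l1}''. However, the step that carries the weight for $n\ge 3$ does not follow from the hypotheses, and the paper's one-line proof skips the same point.

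\textbf{The failing step.} In $D(1)=B$ you assert that $f_l(z,y,x_1,\dots,x_{l-1})$ has $x_{l-1}^{s_{l-1}}$ as its unique term of top $y_j$-pole order. Your reason is that the other monomials have smaller $X_{l-1}$-exponent and the remaining $x_i$ have $a_i<a_{l-1}$. But monicity in $X_{l-1}$ bounds only the exponent of $X_{l-1}$. The exponents of $X_1,\dots,X_{l-2}$ in $f_l$ are arbitrary, and a monomial $X_1^{t_1}\cdots X_{l-1}^{t_{l-1}}$ has degree $\sum_i t_ia_i$, which can exceed $s_{l-1}a_{l-1}$.

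\textbf{A counterexample.} Take $m=1$, $n=3$ and
$x_1y=z^2$, $x_2y^2=x_1^2$, $x_3y^2=x_2^2+x_1^9$; this satisfies (\ref{star1}).
\begin{itemize}
\item Here $a_1=1$ and $a_2=4$, but $x_1^9$ has degree $9>8=s_2a_2$.
\item So $x_3=z^8y^{-10}+z^{18}y^{-11}$ has degree $11$, not $a_3=10$.
\item In $\gr(B)$ one gets $\tilde x_3\tilde y^2=\tilde x_1^9$, not $\tilde x_2^2$. Hence the images of $z,y,x_1,x_2,x_3$ do not satisfy the relations of $D(2)$.
\item $\Gamma$ is not an admissible generating set. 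Since $x_1^9=x_1x_2^4y^8$, the element $x_3-x_1x_2^4y^6=z^8y^{-10}$ lies in $B$. Its leading form is not a polynomial in the leading forms $z,\,y,\,z^2y^{-1},\,z^4y^{-4},\,z^{18}y^{-11}$ of the generators. All of these satisfy (pole order) $\le$ ($z$-degree), a property preserved under products, and $z^8y^{-10}$ violates it.
\item Your normal-form reduction $x_l^{s_l}\mapsto x_{l+1}y^{k_{l+1}}-(f_{l+1}-x_l^{s_l})$ need not terminate, for the same reason: it reintroduces high powers of the lower $x_i$.
\end{itemize}

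\textbf{Where exactly it breaks, and what is missing.} For $n=2$ (Lemma~\ref{l1}) there is no problem, because $x_1$ is the only generator with a pole that can occur in $f_2$. For $j\ge 2$ the relations are binomial, so degrees and relations pass to $\gr$ exactly. The defect is therefore precisely the passage from $\gr(D(1))$ to $D(2)$ for $l\ge 3$. To close it you need one of two things.
\begin{itemize}
\item An extra hypothesis: for the weight $w$ with $w(Y_1)=-1$, $w(Z)=w(Y_i)=0$ for $i\ne 1$, and $w(X_i)=a_i$, the monomial $X_{l-1}^{s_{l-1}}$ is the unique monomial of $f_l$ of maximal weight.
\item An actual argument that each $x_l$ can be replaced by some $x_l-h_l$, with $h_l\in\KK[z,y,x_1,\dots,x_{l-1}]$, so that this holds. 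In the example, $x_3\mapsto x_3-x_1x_2^4y^6$ works, and then $\gr(B)\simeq D(2)$ does hold. So the issue is the argument, not necessarily the conclusion.
\end{itemize}
Your closing dimension count is fine. The gap lies entirely in the leading-form computation for $D(1)$ and in the admissibility of $\Gamma$, which depends on it.
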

\begin{remark}
    In this lemma and we assume that $z, y_1, \ldots, y_m$ and $ x_1, \ldots, x_n$ denote for every $D(j)$ the images of $Z, Y_1, \ldots, Y_m$ and $X_1, \ldots, X_n$ in $D(j)$ respectively.
\end{remark}
\begin{proof}[Proof of Lemma~\ref{what}]
    The proof is exactly the same as the proof of Lemma~\ref{l1}.
\end{proof}
\begin{lemma}\label{42}
    Let $D(m)$ be as in the previous lemma. Consider $D(m)$ as a subalgebra of the $\NN$-graded algebra $\widehat{A} = \oplus_{i\in \NN^m}\KK[\widetilde y_1, \widetilde y_{1}^{-1},\widetilde y_2, \widetilde y_2^{-1}, \ldots, \widetilde y_m, \widetilde y_m^{-1}]\widetilde z^i$.\\
    Define $\ZZ-$filtration $\{D(m)_k\}$ on $D(m)$:
    $$D(m)_k := D(m) \cap (\oplus_{i \leq k}\widehat{A}\widetilde z^i).$$
    This filtration is admissible and the corresponding graded ring $\gr(D(m)) \simeq C$, where
    \begin{multline*}
        C := A_n/(X_1Y_1^{k_{11}}Y_2^{k_{12}}\ldots Y_m^{k_{1m}} - Z^{s_0},\\ X_2Y_1^{k_{21}}\ldots Y_m^{k_{2m}} - X_1^{s_1}, \ldots, X_nY_1^{k_{n1}}\ldots Y_m^{k_{nm}} - X_{n-1}^{s_{n-1}}).
    \end{multline*}
\end{lemma}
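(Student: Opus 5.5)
We follow the proof of Lemma~\ref{l2} (itself modelled on \cite[Lemma~3.5]{GS}), adjusted to $n$ equations. In $D(m)$ only $y_m$ survives in the first relation, so it reads $\widetilde x_1\widetilde y_1^{k_{11}}\cdots\widetilde y_m^{k_{1m}} = f_1(\widetilde z,0,\ldots,0)$, a monic polynomial in $\widetilde z$ of degree $s_0$. The remaining relations are the binomials $\widetilde x_l\widetilde y_1^{k_{l1}}\cdots\widetilde y_m^{k_{lm}} = \widetilde x_{l-1}^{s_{l-1}}$ for $2\le l\le n$. Since $D(m)$ is a domain (Lemma~\ref{41}, applied to these particular $f_l$), we view it inside $\widehat A$ graded by the $\widetilde z$-degree. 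Here
\[
\widetilde y_i\in D(m)_0\setminus D(m)_{-1},\quad \widetilde z\in D(m)_1\setminus D(m)_0,\quad \widetilde x_l\in D(m)_{s_0s_1\cdots s_{l-1}}\setminus D(m)_{s_0s_1\cdots s_{l-1}-1},
\]
because $\widetilde x_1$ has leading $\widetilde z$-part $\widetilde z^{s_0}$ divided by a monomial in the $\widetilde y_i$, and inductively $\widetilde x_l$ has leading part $\widetilde x_{l-1}^{s_{l-1}}$ divided by a monomial.

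\textbf{Normal form.} We reduce with the relations to write every element of $D(m)$ as
\[
\widetilde g=\sum_{i=0}^{s_0-1}\ \sum_{\substack{0\le j_1<s_1,\ldots,0\le j_{n-1}<s_{n-1}\\ j_n\ge 0}} g_{i,j}(\widetilde y_1,\ldots,\widetilde y_m)\,\widetilde z^{\,i}\widetilde x_1^{j_1}\cdots\widetilde x_n^{j_n}.
\]
To do this, replace $\widetilde x_{l-1}^{s_{l-1}}$ by $\widetilde x_l\widetilde y^{k_l}$ recursively from top to bottom. Replace $\widetilde z^{s_0}$ by $\widetilde x_1\widetilde y^{k_1}$ minus lower powers of $\widetilde z$; these lower terms have strictly smaller filtration degree, so the process terminates.

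\textbf{Admissibility.} Each monomial $\widetilde z^{\,i}\widetilde x^{j}$ in the normal form has degree $i+\sum_l j_l s_0\cdots s_{l-1}$. By the mixed-radix uniqueness of this expansion (note $i<s_0$ and $j_l<s_l$ for $l<n$), distinct index tuples give distinct degrees. Moreover, the leading $\widetilde z$-parts of the monomials are $\widetilde z^{\deg}$ times nonzero Laurent monomials in the $\widetilde y_i$, so they cannot cancel within the same degree. Hence the filtration degree of $\widetilde g$ equals the maximum of the degrees of its monomials, and the filtration is admissible with respect to the generating set $\Gamma=\{\widetilde z,\widetilde y_i,\widetilde x_l\}$.

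\textbf{Identifying $\gr(D(m))$.} The images of the generators satisfy the relations of $C$. Indeed, $\widetilde x_1\widetilde y^{k_1}-\widetilde z^{s_0}$ has degree less than $s_0$, and the binomial relations are homogeneous and pass to $\gr$ unchanged. This gives a surjection $C\to\gr(D(m))$. To show it is injective, we use a dimension argument as in Lemma~\ref{l1}. $C$ is a domain of dimension $m+1$ by Lemma~\ref{41}, since $Z^{s_0}$ is monic. $\gr(D(m))$ contains the algebraically independent elements $\widetilde z,\widetilde y_1,\ldots,\widetilde y_m$ inside $\gr(\widehat A)\simeq\widehat A$. A surjection from a domain onto a ring of the same dimension must then be an isomorphism.

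\textbf{Main obstacle.} The hardest step is the uniqueness and non-cancellation claim for leading terms in the admissibility argument. We will handle it by checking that each leading coefficient is a single nonzero Laurent monomial in the $\widetilde y_i$.
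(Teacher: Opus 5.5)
Your proposal is correct and follows essentially the paper's route: a normal form generalizing (\ref{el2}), admissibility from the mixed-radix distinctness of the leading $\widetilde z$-degrees, the surjection $C\to\gr(D(m))$, and the dimension argument taken from Lemma~\ref{l1}. One harmless slip is that, by the definition in Lemma~\ref{what}, the first relation of $D(m)$ is $f_1(\widetilde z,0,\ldots,0,\widetilde y_m)$ rather than $f_1(\widetilde z,0,\ldots,0)$; your argument is unaffected, because this polynomial is still monic of degree $s_0$ in $\widetilde z$ and $\widetilde y_m$ has filtration degree $0$.
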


\begin{remark}
    In this lemma we assume that $\widetilde z, \widetilde y_1, \ldots, \widetilde y_m$ and $ \widetilde x_1, \ldots, \widetilde x_n$ denote the images of $Z, Y_1, \ldots, Y_m, X_1, \ldots, X_n$ in $D(m)$ respectively. 
\end{remark}

\begin{proof}[Proof of Lemma~\ref{42}]
    The proof is exactly the same as the proof of Lemma~\ref{l2}.
\end{proof}

Consider the following conditions for parameters $s_l, k_{ij}$ for $m = 1$:
\begin{equation}\label{star1}
\begin{split}
    1) \quad & s_0 \geq 2, \\ or \\
    2) \quad & s_0 = 1, s_l \geq 2, k_{l1} \geq 2 \; \text{for all} \; 1 \leq l \leq n-1, \\
\end{split}
\tag{**}
\end{equation}

and for $m > 1$:
\begin{equation}\label{star2}
\begin{split}
    1) \quad & s_l \geq 2, k_{lj} \geq 2. \\
\end{split}
\tag{***}
\end{equation}

\begin{lemma}\label{main4}
    Let $C$ be the algebra, defined in the previous lemma; the condition (\ref{star1}) holds for $m =1$; for $m > 1$ the condition~(\ref{star2}) holds for all equations or for all but one equations of the system~(\ref{system}).
    
    Consider $C = \oplus_{i\in \ZZ} C_i$ as a graded subalgebra of the algebra $\widehat{A}$, where $$\widehat{A} = \oplus_{i\in \NN^m}\KK[\overline y_1, \overline y_{1}^{-1}, \ldots, \overline y_m, \overline y_m^{-1}]\overline z^i,$$ $$C_i = 0, \; \text{for} \; i < 0,$$
    $$C_i = C \cap \KK[\overline y_1, \overline y_1^{-1}, \ldots, \overline y_m, \overline y_m^{-1}]\overline z^i, \; \text{for} \; i \geq 0.$$
    Then for every homogeneous nonzero LND $\delta$, defined of $C$, we have: $$\Ker(\delta) \subseteq \KK[\overline y_1, \ldots, \overline y_m].$$
\end{lemma}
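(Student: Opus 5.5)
I will follow the scheme of Lemma~\ref{main}, now with a chain of subalgebras indexed by the equations. The grading on $C$ gives $\deg\overline y_j=0$, $\deg\overline z=1$, $\deg\overline x_1=s_0$, and $\deg\overline x_l=s_0s_1\cdots s_{l-1}$. Put $R_0:=C$. For $l\ge1$ let $R_l$ be the subalgebra generated by $\overline y_1,\dots,\overline y_m,\overline x_l,\dots,\overline x_n$; it is the quotient of a polynomial ring by the last $n-l$ binomial relations.

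\emph{Step 1: $\Ker\delta\subseteq\KK[\overline y]\oplus R_n$-part.} Using the relations, every element of $C$ can be written uniquely as a sum of monomials
$$g(\overline y)\,\overline z^{a_0}\overline x_1^{a_1}\cdots\overline x_{n-1}^{a_{n-1}}\overline x_n^{a_n},\qquad 0\le a_{l}<s_{l},\ l<n.$$
The degree $a_0+a_1s_0+a_2s_0s_1+\dots$ is then a mixed-radix expansion, so homogeneous components are single such monomials (times polynomials in $\overline y$). This is the same residue argument as in Lemma~\ref{main}, iterated. Since $\Ker\delta$ is graded and factorially closed, a homogeneous kernel element containing $\overline z$ or some $\overline x_l$ with $l<n$ forces that variable into $\Ker\delta$. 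Going up and down the chain of relations $\overline x_l\overline y^{k_l}=\overline x_{l-1}^{s_{l-1}}$ then puts all generators in the kernel, so $\delta=0$. Hence $\Ker\delta\subseteq\KK[\overline y]\oplus\bigoplus_{j>0}\KK[\overline y]\overline x_n^j$.

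\emph{Step 2: exclude $\overline x_n\in\Ker\delta$.} If some $g(\overline y)\overline x_n^j$ with $j>0$ lies in $\Ker\delta$, then $\overline x_n\in\Ker\delta$, and $\delta$ extends to a nonzero LND of
$$\widetilde R=\KK(X_n)[Z,Y,X_1,\dots,X_{n-1}]/(F_1,\dots,F_n).$$
I then choose an $\omega$-weight filtration tailored to the hypothesis. Take a weight $\omega$ such that, for each equation satisfying the condition~(\ref{star2}) (or~(\ref{star1})), the leading form $\overline{F_l}^\omega$ is a pure power: $Z^{s_0}$, or $X_{l-1}^{s_{l-1}}$. For the single exceptional equation the weight is chosen so that its leading form is the full binomial $X_l\overline Y^{k_l}-X_{l-1}^{s_{l-1}}$. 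The pure-power leading forms are pairwise coprime with each other and with the binomial, so by Lemma~\ref{s} and Theorem~\ref{Buh} the $F_l$ form a Gröbner basis. Proposition~\ref{Grebner} then identifies $\gr(\widetilde R)$ as the quotient by these leading forms.

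In $\gr(\widetilde R)$ the nilpotent variables (those appearing as pure powers) lie in the kernel of every LND, since an LND kills nilpotents. Localizing, that is, passing to the fraction field in those variables, leaves an algebra over a field defined by at most one binomial $X\overline Y^{k}=W^{s}$ with $s\ge2$, $k_j\ge2$. This algebra is rigid by~\cite[Lemma 4.5]{G}. So $\gr(\widetilde R)$ is rigid, hence $\widetilde R$ is rigid by Proposition~\ref{grrig}, which contradicts $\delta\ne0$. For $m=1$ under~(\ref{star1}) case 2) ($s_0=1$), I would first use $Z$-elimination, since $X_1=Z$ up to a unit factor, to reduce to the shifted system and then apply the same argument.

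\emph{Main obstacle.} The main obstacle is Step~2. I must choose weights making the leading forms coprime, which needs careful ordering of the weights along the chain $Z\to X_1\to\dots$. I must also check that $\gr(\widetilde R)$, though non-reduced, still has its LNDs killing the nilpotents, and that the remaining binomial algebra really satisfies the hypotheses of~\cite[Lemma 4.5]{G}. This is exactly where the condition ``all but one equation'' enters.
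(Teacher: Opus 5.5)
\textbf{Step 1 is fine; the gap is in Step 2.} Your Step~1 is correct and cleaner than the paper's route. The paper argues by induction on $n$, passing through the subalgebra $R_1$ and the two-equation case (Lemma~\ref{main}, \cite[Lemma 3.6]{GS}). Your mixed-radix expansion of $\deg\bigl(\overline z^{a_0}\overline x_1^{a_1}\cdots\overline x_n^{a_n}\bigr)$ shows directly that each graded piece $C_d$ is $\KK[\overline y_1,\dots,\overline y_m]\cdot M_d$ for a single monomial $M_d$. Factorial closedness then reduces everything to excluding $\overline x_n\in\Ker\delta$. All the content therefore sits in Step~2, and there the argument fails.

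\textbf{Non-reduced $\gr$ is not rigid.} First, ``an LND kills nilpotents'' is false. On $\KK[z]/(z^3)$ the derivation $\partial=z^2\frac{d}{dz}$ is locally nilpotent, since $\partial^2(z)=2z^3=0$, yet $\partial(z)=z^2\neq0$. Worse, even if the nilpotent variables are killed, a non-reduced $\gr(\widetilde R)$ is not rigid. In the configuration you actually need (binomial in the last equation) one has $\gr(\widetilde R)\cong R'\otimes_L N$, where $L=\KK(X_n)$, $R'=L[Y_1,\dots,Y_m,X_{n-1}]/(X_nY_1^{k_{n1}}\cdots Y_m^{k_{nm}}-X_{n-1}^{s_{n-1}})$ and $N=L[Z,X_1,\dots,X_{n-2}]/(Z^{s_0},\dots,X_{n-2}^{s_{n-2}})$. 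If $N\neq L$, take a nonzero nilpotent $\varepsilon\in N$ and any nonzero derivation $\theta$ of $R'$, extended by $\theta(N)=0$. Then $(\varepsilon\theta)^k=\varepsilon^k\theta^k$, so $\varepsilon\theta$ is a nonzero LND. Hence the hypothesis of Proposition~\ref{grrig} is simply unavailable. ``Localizing in the nilpotent variables'' gives the zero ring. Passing to $\gr(\widetilde R)_{\mathrm{red}}$ instead only shows that $\gr(\delta)$ maps into the nilradical, which is exactly what $\varepsilon\theta$ does, so no contradiction follows. Theorem~\ref{homgr} likewise presupposes that $\gr$ is a domain.

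\textbf{The weight choice is backwards.} To invoke \cite[Lemma 4.5]{G}, the surviving binomial must satisfy $s\ge2$ and $k_j\ge2$, so it cannot be the exceptional equation. It must also be the last equation. If you keep $X_lY^{k_l}-X_{l-1}^{s_{l-1}}$ with $l<n$ while $F_{l+1}$ has leading form $X_l^{s_l}$, then $X_l$ is nilpotent, hence so is $X_{l-1}$, and the reduced ring collapses to $L[Y_1,\dots,Y_m]$, which is not rigid. The same collapse happens if all leading forms are pure powers. So when the last equation is the exceptional one (e.g.\ $n=2$ in case~2) of~(\ref{star})), there is no choice of this shape at all.

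\textbf{Citing Lemma~\ref{main} does not help.} Its proof uses the same template: it ``extends'' $\partial$ after inverting the nilpotent $z$, respectively $x_1$. In its case~2) the reduced $\gr$ is again $L[Y_1,\dots,Y_m]$, and the paper's proof of the present lemma rests on it. What is missing is an argument excluding $\overline x_n\in\Ker\delta$ that does not pass through a non-reduced associated graded ring. Possible routes are a filtration whose $\gr$ is a domain, or a direct use of the torus action on the semigroup algebra $C$.

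\textbf{Minor point.} For $s_0=1$ one has $\overline z=\overline x_1\overline y_1^{k_{11}}$, which is not a unit multiple of $\overline x_1$. Eliminating $Z$ still works, though.
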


\begin{remark}
In this lemma and in the following proof we assume that $\overline z, \overline y_1, \ldots, \overline y_m, \overline x_1, \ldots, \overline x_n$ denote the images of $Z, Y_1, \ldots, Y_m, X_1, \ldots, X_n$ in $C$ respectively.     
\end{remark}

\begin{proof}[Proof of Lemma~\ref{main4}]
    Let $\delta$ be be a nonzero LND, which is homogeneous with respect to the grading, defined on $C$. Put
    $$R := \frac{\KK[Y_1, \ldots, Y_m, X_1, \ldots, X_n]}{(X_2Y_1^{k_{21}}\ldots Y_m^{k_{2m}} - X_1^{s_1}, \ldots, X_{n}Y_1^{k_{n1}}\ldots Y_m^{k_{nm}} - X_{n-1}^{s_{n-1}})}.$$
    Then we obtain the following relation in exactly the same way as in the proof of Lemma~\ref{main}: $\Ker(\delta) \subseteq R$.
    Prove by induction on $n$, that $\Ker(\delta) \subseteq \KK[\overline y_1, \ldots, \overline y_m]$.
    Base case ($n = 2$) was proved in Lemma~\ref{main} and~\cite[Lemma 3.6]{GS}.\\
    Inductive step: set $R_1 := \frac{\KK[Y_1, \ldots, Y_m, X_1, \ldots, X_{n-1}]}{(X_2Y_1^{k_{21}}\ldots Y_m^{k_{2m}} - X_1^{s_1}, \ldots, X_{n-1}Y_1^{k_{n-1,1}}\ldots Y_m^{k_{n-1,m}} - X_{n-2}^{s_{n-2}})}.$\\ Then $R = R_1[X_n]/(X_{n}Y_1^{k_{n1}}\ldots Y_m^{k_{nm}} - X_{n-1}^{s_{n-1}})$ and $\Ker(\delta_1) \subseteq \KK[\overline y_1, \ldots, \overline y_m]$, where $\delta_1$ is an extension of $\delta$ to $R_1$. Moreover, if $g_1 \in R_1$ is in the kernel of $\delta_1$, then, by the base case, $g_1 \in \KK[\overline y_1, \ldots, \overline y_m]$.

    Now, let $g \in \Ker (\delta) \subseteq R$. Then $g$ can be written as the following sum:
    \begin{equation}\label{hom1}
        g = \sum_{0 \leq i < s_{n-1}}g_i\overline x_{n-1}^i + \sum_{\substack{0 \leq i < s_{n-1},\\ j >0}}g_{ij}\overline x_{n-1}^i \overline x_n^j,
    \end{equation}
    where $g_i, g_{ij} \in R_1[\overline y_1, \ldots, \overline y_m].$\\
    Decompose $g_i$ and $g_{ij}$ into a homogeneous components in $C$. Repeating the reasoning of the proof of~\cite[Lemma 3.6]{GS} for $m=1$ and of the proof of Lemma~\ref{main} for $m > 1$, we conclude, that the relation~(\ref{hom1}) has a form $g = g_0 \in R_1$. Hence, $\delta(g) = \delta_1(g) = 0$, then, $g \in \KK[\overline y_1, \ldots, \overline y_m]$.
\end{proof}

\begin{lemma}\label{lnd4}
    There is a nonzero LND $\delta$ of $B$ such that $$\Ker(\delta) = \KK[y_1, \ldots, y_m].$$
\end{lemma}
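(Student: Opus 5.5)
We construct $\delta$ as a triangular derivation in the localization and then check it preserves $B$. Put $\hat B := B[y_1^{-1},\ldots,y_m^{-1}] = \KK[y_1^{\pm1},\ldots,y_m^{\pm1}][z]$. This is a polynomial ring in $z$ over the Laurent ring, since every $x_l$ is expressed recursively through $z$ and the $y$'s by dividing by a monomial in the $y_j$. Set
\[
M_l := y_1^{k_{l1}}\cdots y_m^{k_{lm}} \quad (1\le l\le n), \qquad N := M_1M_2\cdots M_n .
\]
On $\hat B$ define the LND $\delta$ by $\delta(y_j)=0$ and $\delta(z)=N$. It is locally nilpotent on $\hat B$ because it is $N\,\partial/\partial z$ with $N\in\ker\delta$.

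The first step is to show that $\delta(B)\subseteq B$. It suffices to check $\delta(x_l)\in B$ for each $l$. I would prove by induction on $l$ the stronger statement:
\[
\delta(x_l)\in M_{l+1}\cdots M_n\, B \quad\text{for } 1\le l\le n,
\qquad\text{and}\qquad \delta(z)\in M_1\cdots M_n\, B .
\]
The base case is $\delta(z)=N$.

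For the inductive step, differentiate the defining relation $x_lM_l=f_l(z,y,x_1,\ldots,x_{l-1})$:
\[
\delta(x_l)=M_l^{-1}\Big(\partial_Z f_l\cdot\delta(z)+\sum_{i<l}\partial_{X_i}f_l\cdot\delta(x_i)\Big).
\]
By induction, $\delta(z)$ and each $\delta(x_i)$ with $i<l$ are divisible by $M_{i+1}\cdots M_n$, hence by $M_l\cdot M_{l+1}\cdots M_n$. So the bracket is divisible by $M_lM_{l+1}\cdots M_n$ in $B$, and dividing by $M_l$ gives $\delta(x_l)\in M_{l+1}\cdots M_n B$. This needs no extra hypothesis, since divisibility is checked with the factor multiplied inside $B$. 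Local nilpotency on $B$ is inherited from $\hat B$, and $B\subseteq\hat B$ because the $y_j$ are non-zero-divisors; this is Lemma~\ref{41} together with the argument of Lemma~\ref{id}.

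The second step is the kernel. Clearly $\KK[y_1,\ldots,y_m]\subseteq\Ker(\delta)$. The kernel of $\delta$ on $\hat B$ is $\KK[y^{\pm1}]$, so
\[
\Ker(\delta)=B\cap \KK[y_1^{\pm1},\ldots,y_m^{\pm1}].
\]
It remains to show this intersection equals $\KK[y_1,\ldots,y_m]$. I would use the fact that $B/(y_j)$ is nonzero and $\KK[y]$-torsion considerations. More directly, $\KK[y_1,\ldots,y_m]$ is factorially closed in $B$: the $y_j$ are prime in $B$ (the quotient $B/(y_j)$ is a domain-like polynomial extension, as in the proof of Lemma~\ref{41}), and each $y_j$ is not a unit. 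Suppose $y^{-a}p\in B$ with $p\in\KK[y]$ not divisible by $y^a$. Reducing modulo $y_j$ would force $p$ to vanish modulo $y_j$ in $B/(y_j)$, and $\KK[y]/(y_j)\to B/(y_j)$ is injective, since $B/(y_j)$ is a free module over $\KK[y]/(y_j)$ with basis given by monomials in $z$ and the $x$'s, as in (\ref{el1}). Iterating removes the negative powers.

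Alternatively, one can argue as in Lemma~\ref{lnd}: $\KK[y]$ is algebraically closed in $B$, $\mathrm{tr.deg}_{\KK[y]}B=1$, and Proposition~\ref{LND} applies. The algebraic closedness still has to be justified by the same injectivity argument modulo $y_j$.

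The main obstacle is making the divisibility in the first step rigorous. It relies on the fact that the partial derivatives of $f_l$, evaluated in $B$, are elements of $B$, which is immediate, and on the inclusion $B\subseteq\hat B$ being injective, which is where Lemma~\ref{41} is used.
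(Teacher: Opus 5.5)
Your proof is correct. Its overall plan is the paper's: a triangular LND with $\delta(y_j)=0$ and $\delta(z)$ a monomial in the $y_j$, whose kernel is then identified with $\KK[y_1,\ldots,y_m]$. The execution, however, is genuinely different and sturdier.

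The paper writes explicit values $\delta(z)=y_1^{k_{11}+k_{21}}\cdots y_m^{k_{1m}+k_{2m}}$ and $\delta(x_l)=P_l\in\KK[y_1,\ldots,y_m]$, and says ``clearly'' this is an LND of $B$. It then asserts that $\KK[y_1,\ldots,y_m]$ is algebraically closed in $B$ and applies Proposition~\ref{LND}.

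Those formulas are not compatible with the relations. Differentiating $x_1M_1=f_1(z,y)$ forces $M_1\delta(x_1)=\partial_Zf_1(z,y)\,\delta(z)$, which involves $z$ once $s_0\ge 2$. Also, the exponent $k_{1j}+k_{2j}$ is too small for $n\ge 3$. For instance, take $m=1$, all $k_{l1}=1$, $f_1=Z$, $f_2=X_1$, $f_3=X_2$. Then $B=\KK[y,x_3]$ with $z=x_3y^3$, so $\delta(x_3)=\delta(z)/y^3$, which is not in $B$ when $\delta(z)=y^2$.

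Your route repairs this. You define $\delta=N\,\partial/\partial z$ on $B[y^{-1}]=\KK[y^{\pm1}][z]$, where local nilpotency is automatic. You then prove $\delta(B)\subseteq B$ by the divisibility induction with $N=M_1\cdots M_n$. Likewise, your proof that $B\cap\KK[y^{\pm1}]=\KK[y]$, via injectivity of $\KK[y]/(y_j)\to B/(y_j)$, supplies the algebraic closedness the paper only asserts.

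Two small corrections. First, the sentence claiming that the $y_j$ are prime in $B$ is false in general. Since all $k_{lj}\ge 1$, every $M_l$ vanishes modulo $y_j$, so
$B/(y_j)\cong\bigl(\KK[Y_i,\,i\ne j][Z,X_1,\ldots,X_{n-1}]/(\bar f_1,\ldots,\bar f_n)\bigr)[X_n]$ with $\bar f_l=f_l|_{Y_j=0}$. For $f_1=Z^2$ the image of $z$ is a nonzero nilpotent there. Delete that sentence, together with the premature ``factorially closed'' claim; your injectivity argument never uses either.

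Second, justify the freeness of $B/(y_j)$ over $\KK[y]/(y_j)$ through the successive monic relations rather than by (\ref{el1}), which is the $n=2$ normal form for $D(j)$. Each $\bar f_l$ is still monic in $X_{l-1}$ (respectively in $Z$ for $l=1$). This gives the basis $z^{a_0}x_1^{a_1}\cdots x_{n-1}^{a_{n-1}}x_n^{b}$ with $a_i<s_i$, which contains $1$.
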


\begin{proof}
    Define $\delta$ by:
    $$\delta(y_1) = \ldots = \delta (y_m) = 0,$$
    $$\delta(z) = y_1^{k_{11}+k_{21}} \ldots y_m^{k_{1m}+k_{2m}},$$
    $$\delta(x_1) = \frac{f_1(y_1^{k_{11}+\ldots +k_{n1}} \ldots y_m^{k_{1m}+\ldots+k_{nm}}, 0, \ldots, 0)}{y_1^{k_{11}} \ldots y_m^{k_{1m}}} = P_1(y_1, \ldots, y_m),$$
    $$\ldots$$
    $$\delta(x_n) = \frac{f_n(y_1^{k_{11}+\ldots +k_{n1}} \ldots y_m^{k_{1m}+\ldots+k_{nm}}, 0, \ldots, 0)}{y_1^{k_{n1}} \ldots y_m^{k_{nm}}} = P_n(y_1, \ldots, y_m),$$
    for $P_1(y_1, \ldots, y_m), \ldots, P_n(y_1, \ldots, y_m) \in \KK[y_1, \ldots, y_m]$. Clearly, $\delta$ is an LND of $B$ and $\KK[y_1, \ldots, y_m] \subseteq \Ker(\delta)$. As $\KK[y_1, \ldots, y_m]$ is algebraically closed in $B$ and $\mathrm{tr.deg}_{\KK[y_1, \ldots, y_m]}B = 1$, then, by Proposition ~\ref{LND}, we see that $\Ker(\delta) = \KK[y_1, \ldots, y_m]$.
\end{proof}

\begin{theorem}\label{ML1}
    Let $B$ be the algebra that was defined earlier. And the condition~(\ref{star1}) holds for $m =1$; for $m > 1$ the condition~(\ref{star2}) holds for all equations or for all but one equation of the system~(\ref{system}). Then $$\mathrm{ML}(B) = \KK[y_1, \ldots, y_m].$$
\end{theorem}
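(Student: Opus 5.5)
The plan is to repeat the proof of Theorem~\ref{ML} verbatim, replacing the $n=2$ lemmas by their general-$n$ analogues. One inclusion is immediate. Lemma~\ref{lnd4} gives a nonzero LND of $B$ whose kernel is $\KK[y_1, \ldots, y_m]$. Hence $\mathrm{ML}(B) \subseteq \KK[y_1, \ldots, y_m]$, and it remains to show that every nonzero LND $\delta$ of $B$ satisfies $\Ker(\delta) \subseteq \KK[y_1, \ldots, y_m]$.

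\textbf{Step 1: pass to the graded model $C$.} Fix a nonzero $\delta \in \mathrm{LND}(B)$. By Lemma~\ref{what} the filtrations on $D(1)=B, D(2), \ldots, D(m)$ are admissible, with $\gr(D(j)) \simeq D(j+1)$. By Lemma~\ref{42}, $D(m)$ has an admissible filtration with $\gr(D(m)) \simeq C$. All these algebras are domains by Lemma~\ref{41}, applied with special choices of the $f_l$. Applying Theorem~\ref{homgr} $m$ times gives nonzero LNDs $\widetilde\delta_2, \ldots, \widetilde\delta_m$ on $D(2), \ldots, D(m)$ and $\overline\delta$ on $C$. The natural maps send kernels into kernels: $\rho_j(\Ker \widetilde\delta_j) \subseteq \Ker\widetilde\delta_{j+1}$, and similarly for the last step $\overline\rho$.

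The LND $\overline\delta$ is homogeneous with respect to the induced grading. If one prefers, replace it by an extremal homogeneous component, which is again a nonzero LND by Lemma~\ref{hl}; the proof of Lemma~\ref{main4} in any case uses only the grading.

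\textbf{Step 2: bound the kernel of $\overline\delta$ and lift.} Under (\ref{star1}) for $m=1$, or (\ref{star2}) for all equations or all but one when $m>1$, Lemma~\ref{main4} gives $\Ker(\overline\delta) \subseteq \KK[\overline y_1, \ldots, \overline y_m]$.

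Now take $f \in \Ker(\delta)$, subtracting constants where needed so that the images stay nonconstant. Its image $\overline\rho(\rho_{m-1}\circ\cdots\circ\rho_1(f))$ then lies in $\KK[\overline y_1, \ldots, \overline y_m]$. Going back through the filtrations, I would use the normal forms analogous to (\ref{el1}) and (\ref{el2}): every element is written uniquely in monomials in $z$, $y$ and $x_l$, with bounded powers of $z$ and of $x_{l-1}$ relative to $x_l$. In such a form the leading part in $y_j$ (respectively in $z$) lies in $\KK[y_1, \ldots, y_m]$ only if the element itself does. Descending induction then gives $f \in \KK[y_1, \ldots, y_m]$.

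\textbf{Step 3: conclude.} We now have $\Ker(\delta) \subseteq \KK[y_1, \ldots, y_m]$. Since $\Ker(\delta)$ is factorially closed of transcendence degree $m$ (Proposition~\ref{LND}), and $\KK[y_1, \ldots, y_m]$ is algebraically closed in $B$, we get equality. Intersecting over all $\delta$ yields $\mathrm{ML}(B) = \KK[y_1, \ldots, y_m]$.

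\textbf{Main obstacle.} The delicate point is the lifting in Step 2: showing that an element whose image in the associated graded ring lies in $\KK[y]$ must itself lie in $\KK[y]$. This needs the general-$n$ normal form and the fact that the degree is additive on those monomials. I would handle it by checking that the $\omega$-degree of each normal-form monomial containing some $x_l$ or a positive power of $z$ is strictly separated from the degrees of the pure $y$-monomials. The leading term then detects any such monomial, so the $x$- and $z$-part of $f$ must vanish, by induction on the number of filtration steps.
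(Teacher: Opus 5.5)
Your outline follows the paper's own route: the paper proves Theorem~\ref{ML1} by referring to the proof of Theorem~\ref{ML}. However, the lifting step you single out does not work as you describe, and the paper's ``proceeding in the same manner'' has the same defect.

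\textbf{Where the lifting fails.} Your degree-separation check is sound for the final $z$-filtration. There $D(m)_n=0$ for $n<0$, pure $y$-monomials have degree $0$, and every normal-form monomial containing $z$ or some $x_l$ has positive degree. So a leading form in $\KK[\overline y_1,\ldots,\overline y_m]$ forces the element into $\KK[\widetilde y_1,\ldots,\widetilde y_m]$.

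The check fails for the $y_j$-filtrations. There $y_j$ has degree $-1$ and $z$ has degree $0$, so the leading form records only the lowest power of $y_j$. Everything divisible by a higher power of $y_j$ is invisible. For instance, take $f=y_1+y_1^2z\in B$. Then $\rho_1(f)=\widetilde y_1$, its image under all later filtrations is still $\widetilde y_1$ and finally $\overline y_1\in\KK[\overline y_1,\ldots,\overline y_m]$, yet $f\notin\KK[y_1,\ldots,y_m]$. So ``leading form in $\KK[y]$ implies element in $\KK[y]$'' is false at the $y_j$-steps. An element-by-element descent therefore cannot yield $\Ker(\delta)\subseteq\KK[y_1,\ldots,y_m]$.

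\textbf{A possible fix.} Work with the whole kernel and prove $y_i\in\Ker(\delta)$ directly.
\begin{enumerate}
\item Fix $i$ and filter $B$ by $y_i$-order first. By Lemma~\ref{what} (with $y_i$ in the role of $y_1$), the graded algebra has $f_1$ replaced by $f_1|_{y_i=0}$ and each $f_l$ replaced by $x_{l-1}^{s_{l-1}}$.
\item The $z$-filtration as in Lemma~\ref{42} then takes this algebra straight to $C$. There Lemma~\ref{main4} and your valid $z$-step lifting apply.
\item This gives $\rho_i(\Ker(\delta))\subseteq\KK[\widetilde y_1,\ldots,\widetilde y_m]$. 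That is, every $h\in\Ker(\delta)\subseteq\KK[z,y_1^{\pm1},\ldots,y_m^{\pm1}]$ has nonnegative $y_i$-order, and its $y_i^0$-coefficient lies in $\KK[y_k:k\ne i]$.
\item Taking this coefficient is a ring homomorphism $\Ker(\delta)\to\KK[y_k:k\ne i]$. It cannot be injective, since $\mathrm{tr.deg}\,\Ker(\delta)=m$. Hence some $0\ne h\in\Ker(\delta)$ has positive $y_i$-order, so $h\in y_iB[y_k^{-1}:k\ne i]$.
\item $B/y_iB$ is a free $\KK[y_k:k\ne i]$-module, because the $f_l|_{y_i=0}$ are monic. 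So the $y_k$ are non-zero-divisors on it, and $h\in y_iB$.
\item Factorial closedness of $\Ker(\delta)$ then gives $y_i\in\Ker(\delta)$.
\end{enumerate}
Doing this for every $i$ yields $\KK[y_1,\ldots,y_m]\subseteq\Ker(\delta)$. Equality follows because both have transcendence degree $m$ and $\KK[y_1,\ldots,y_m]$ is algebraically closed in $B$.
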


\begin{proof}
    The proof is exactly the same as the proof of Theorem~\ref{ML}, considering Lemma~\ref{41} and Lemmas~\ref{42}~\ref{main4} and~\ref{lnd4}.
\end{proof}

\begin{lemma}\label{fixit1}
    For every automorphism $\phi \in \Aut(B)$ there is $\lambda \in \KK^{\times}$ such that $\phi(y_j) = \lambda_j y_i$.
\end{lemma}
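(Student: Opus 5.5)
We follow the proof of Lemma~\ref{fixit}, replacing Lemma~\ref{main} and Lemma~\ref{lnd} by their general-case versions Lemma~\ref{main4} and Lemma~\ref{lnd4}.

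\emph{Semi-rigidity of $C$.} Let $C$ be the graded algebra of Lemma~\ref{42}. It is itself a multi Danielewski algebra: take $f_1 = Z^{s_0}$ and $f_l = X_{l-1}^{s_{l-1}}$. The exponents $k_{ij}, s_l$ are the same as for $B$, so the hypotheses of Theorem~\ref{ML1} still hold and $\mathrm{ML}(C) = \KK[\overline y_1, \ldots, \overline y_m]$. By Lemma~\ref{lnd4} there is an LND $\delta$ of $C$ whose kernel is exactly $\KK[\overline y_1, \ldots, \overline y_m]$. Hence every nonzero LND of $C$ has this kernel, and $C$ is semi-rigid.

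\emph{The subalgebra $P$.} The filtration $U_i = \{f \in C \mid \deg_\delta(f) \le i\}$ depends only on the common kernel, so every automorphism of $C$ preserves it. For the explicit $\delta$ we have $\deg_\delta \overline y_j = 0$ and $\deg_\delta \overline z = 1$. From the relations of $C$, $\deg_\delta \overline x_1 = s_0$ and $\deg_\delta \overline x_l = s_0 s_1 \cdots s_{l-1}$. Put $P := \KK[U_{s_0}]$. As in Lemma~\ref{fixit}, this is the Danielewski-variety algebra
\[
P = \KK[Y_1, \ldots, Y_m, X_1, Z]/(X_1Y_1^{k_{11}}\cdots Y_m^{k_{1m}} - Z^{s_0}),
\]
and it is $\Aut(C)$-invariant.

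\emph{The step most likely to cause trouble.} To identify $U_{s_0}$ we need that no product involving $\overline x_l$ for $l \ge 2$ has $\delta$-degree at most $s_0$. This is immediate when $s_1 \ge 2$, since then $\deg_\delta \overline x_2 > s_0$. If some $s_l = 1$ (allowed in case 2) of~(\ref{star1}), or for the exceptional equation in~(\ref{star2})), we instead take $P$ to be generated by $U_N$, where $N$ is the first level at which a genuine Danielewski relation appears, or by a suitable subalgebra of $U_N$. We must then check that $P$ is still a Danielewski-type algebra to which \cite[Lemma 6.2]{G} applies.

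\emph{Conclusion.} Restricting any $\xi \in \Aut(C)$ to $P$ and applying \cite[Lemma 6.2]{G} gives $\xi(\overline y_j) = \eta_j \overline y_{i}$ for some $i$ and some $\eta_j \in \KK^\times$. Finally, we pass from $C$ to $B$ through the chain of associated graded algebras $B = D(1) \to D(2) \to \cdots \to D(m) \to C$ of Lemmas~\ref{what} and~\ref{42}. The filtrations are defined via the $\ZZ^m$-grading by the $y_j$ and via $z$, so they are compatible with $\mathrm{ML}(B) = \KK[y_1, \ldots, y_m]$, which every $\phi \in \Aut(B)$ preserves. By the argument of \cite[Lemma 7.8]{G}, $\phi$ then induces automorphisms of the successive graded algebras, and the monomial form $\phi(y_j) = \lambda_j y_{i}$ lifts back to $B$, exactly as in the case $n = 2$.
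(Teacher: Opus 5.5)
Your argument follows the paper's proof: semi-rigidity of $C$ via Theorem~\ref{ML1} and Lemma~\ref{lnd4}, the $\Aut(C)$-invariant subalgebra $P=\KK[U_{s_0}]$, \cite[Lemma 6.2]{G} applied to $P$, and \cite[Lemma 7.8]{G} to reach $B$. The degenerate case you flag is a real omission in the paper. Its formula for $P$ tacitly assumes $s_1\ge 2$, and its appeal to \cite[Lemma 6.2]{G} tacitly assumes $s_0\ge 2$. Your patch works, for the following reason. If $s_l=1$ for some $l\ge 1$, then $\overline x_l=\overline x_{l+1}\overline y_1^{k_{l+1,1}}\cdots\overline y_m^{k_{l+1,m}}$. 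So $\overline x_l$ can be eliminated, and the exponent rows of equations $l$ and $l+1$ add. Hence for $s_0\ge 2$ you get $\KK[U_{s_0}]=\KK[\overline y_1,\ldots,\overline y_m,\overline z,\overline x_t]$ with the single relation $\overline x_t\overline y_1^{k_{11}+\cdots+k_{t1}}\cdots\overline y_m^{k_{1m}+\cdots+k_{tm}}=\overline z^{s_0}$, where $t\ge 1$ is least with $s_t\ge 2$ (or $t=n$). If $s_0=1$, the hypotheses force $s_1\ge 2$. Then eliminate $\overline z=\overline x_1\overline y_1^{k_{11}}\cdots\overline y_m^{k_{1m}}$ and take $P=\KK[U_{s_1}]$, with $\overline x_1$ playing the role of $\overline z$.

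What does not hold as you wrote it is the passage from $C$ to $B$. The filtrations of Lemmas~\ref{what} and~\ref{42} are defined through the individual coordinates $y_j$, and automorphisms of $B$ need not preserve them. For example, take $m\ge 2$, $k_{l1}=k_{l2}$ for all $l$, and the $f_l$ symmetric in $Y_1,Y_2$. Then swapping $y_1$ and $y_2$ is an automorphism of $B$ sending $y_1\in D(1)_{-1}$ to $y_2\notin D(1)_{-1}$. Knowing that $\phi$ preserves $\mathrm{ML}(B)=\KK[y_1,\ldots,y_m]$ as a set gives no control over $y_j$-orders unless the conclusion is already known. So $\phi$ does not induce automorphisms of $D(2),\ldots,D(m),C$. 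That chain is a device for degenerating LNDs in Theorem~\ref{ML1}, not automorphisms. The paper makes no such claim and simply cites \cite[Lemma 7.8]{G}. Either do the same, or, to make the transfer explicit, work with a filtration that every automorphism respects. The $\deg_\delta$-filtration of $B$ itself is one such filtration: it is $\Aut(B)$-invariant because $B$ is semi-rigid by Theorem~\ref{ML1} and Lemma~\ref{lnd4}.
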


\begin{proof}
    Let $C$ be a graded subalgebra of the algebra $\widehat{A}$ as in Lemma~\ref{main4}. We have $\mathrm{ML}(C) = \KK[\overline y_1, \ldots, \overline y_m]$, since $C$ is a particular case of the algebra $B$. By Lemma~\ref{lnd4} there is an LND $\delta$ of $C$ such that $\Ker(\delta) = \KK[\overline y_1, \ldots, \overline y_m]$. Hence, $C$ is a semi-rigid algebra. 
    
    Consider sets $U_i = \{f \in C | \deg_{\delta}(f) \leq i\}$. Then $C = \cup_i U_i$. Put $P := \KK[U_{s_0}]$. It is a subalgebra of $C$. One can see that $$\deg_{\delta} (\overline y_1)= \ldots = \deg_{\delta} (\overline y_m) = 0, \ \deg_{\delta} (\overline z) = 1,$$ $$\deg_{\delta} (\overline x_1) = s_0, \ \deg_{\delta} (\overline x_2) = s_0s_1, \; \ldots, \; \deg_{\delta}(\overline x_n) = s_0 \cdot\ldots \cdot s_{n-1}.$$ Then $P = \frac{\KK[Y_1, \ldots, Y_m, X_1, Z]}{X_1Y_1^{k_{11}}Y_2^{k_{12}}\ldots Y_m^{k_{1m}} - Z^{s_0}}$. Note that any algebra automorphism sends any LND of an algebra to an LND. The filtration $U_i$ is the same for all LNDs of $C$ as $C$ is semi-rigid. Hence, $P$ is invariant under the action of all $\xi \in \Aut(C)$. By~\cite[Lemma 6.2]{G}, we have the following relation for $\psi \in \Aut(P)$:
    $$\psi(\overline y_j) = \mu_j \overline y_i, \; \; \mu \in \KK^{\times},  \; \; \overline y_i , \overline y_j \in P.$$
    Thus, for any $\xi \in \Aut(C)$:
    $$\xi(\overline y_j) = \eta_j \overline y_i, \; \; \eta \in \KK^{\times}.$$
    Then we have the same relation for any $\phi \in \Aut(B)$ by~\cite[Lemma 7.8]{G}:
    $$\phi(y_j) = \lambda_j y_i, \; \; \lambda \in \KK^{\times}.$$
\end{proof}

Consider two algebras defining multi Danielewski varieties:

\begin{equation}\label{system1}
\begin{cases} 
  X_1Y_1^{k_{11}}\ldots Y_m^{k_{1m}} - f_1(Z, Y_1, \ldots, Y_m) = 0,\\
  X_2Y_1^{k_{21}}\ldots Y_m^{k_{2m}} - f_2(Z, Y_1, \ldots, Y_m, X_1) = 0,\\
      \; \;\; \;\; \;\; \;\; \;\; \;\; \;\; \;\; \;\; \;\; \;\; \;\ldots\\
  X_nY_1^{k_{21}}\ldots Y_m^{k_{2m}} - f_n(Z, Y_1, \ldots, Y_m, X_1, \ldots, X_{n-1}) = 0
\end{cases}
\end{equation}
and
\begin{equation}\label{system2}
\begin{cases} 
  X_1Y_1^{p_{11}}\ldots Y_m^{p_{1m}} - g_1(Z, Y_1, \ldots, Y_m) = 0,\\
  X_2Y_1^{p_{21}}\ldots Y_m^{p_{2m}} - g_2(Z, Y_1, \ldots, Y_m, X_1) = 0,\\
      \; \;\; \;\; \;\; \;\; \;\; \;\; \;\; \;\; \;\; \;\; \;\; \;\ldots\\
  X_nY_1^{p_{21}}\ldots Y_m^{p_{2m}} - g_n(Z, Y_1, \ldots, Y_m, X_1, \ldots, X_{n-1}) = 0,
\end{cases}
\end{equation}
where $k_{ij}, p_{ij} \in \NN$; $f_1(Z, Y_1, \ldots, Y_m), g_1(Z, Y_1, \ldots, Y_m) \in \KK[Z, Y_1, \ldots, Y_m]$ are monic in $Z$; polynomials $f_l(Z, Y_1, \ldots, Y_m, X_{l-1})$, $ g_l(Z, Y_1, \ldots, Y_m, X_{l-1}) \in \KK[Z, Y_1, \ldots, Y_m, X_1, \ldots, X_{l-1}]$ are monic in $X_{l-1}$ for $2 \leq l \leq n$. Let $I_1 \subseteq A_n$ and $I_2 \subseteq A_n$ denote the ideals that are generated by the left-hand sides of the systems~(\ref{system1}) and~(\ref{system2}) respectively. Put $B_1 := A_n/I$ and $B_2 := A_n/I$. Set 
$$\deg_Zf_1(Z, Y_1, \ldots, Y_m) := s_{01}, \; \; \; \deg_Zg_1(Z, Y_1, \ldots, Y_m) := s_{02},$$ 
$$\deg_{X_{l-1}}f_l(Z, Y_1, \ldots, Y_m, X_1, \ldots X_{l-1}) := s_{l-1,1} \; \text{for} \; 2 \leq l \leq n,$$
$$\deg_{X_{l-1}}g_l(Z, Y_1, \ldots, Y_m, X_1, \ldots X_{l-1}) := s_{l-1,2} \; \text{for} \; 2 \leq l \leq n.$$
The images of $Z, Y_1, \ldots, Y_m, X_1, \ldots, X_n$ will be respectively denoted by $z, y_1, \ldots, y_m$ and $ x_1, \ldots, x_n$ in $B_1$ and by $\widehat z, \widehat y_1, \ldots,\widehat y_m,\widehat x_1, \ldots,\widehat x_n$ in $B_2$.

\begin{theorem}\label{main111}
    Let the condition~(\ref{star1}) hold for $m =1$ for the systems~(\ref{system1}) and~(\ref{system2}); for $m > 1$ the condition~(\ref{star2}) holds for all equations or for all but one equation of the systems~(\ref{system1}) and~(\ref{system2}).\\
    Assume $B_1 \simeq B_2$. Then the following holds true:
    \begin{enumerate}
        \item[1)] $s_{l,1} = s_{l,2}$, for $0 \leq l \leq n-1$, and there is a permutation $\sigma$ of a set $\{1, \ldots, m\}$, such that $k_{j\sigma(i)} = p_{ji}$ for all $i, j$. Put $s_l := s_{l,1} = s_{l,2}$ and $ d_{ji} := k_{j\sigma(i)} = p_{ji}$. 
        \item[2)] there are $\lambda_1, \ldots, \lambda_m, \gamma \in \KK^{\times}, \delta \in \KK[Y_1, \ldots, Y_m], h_l \in \KK[Z, Y_1, \ldots, Y_m, X_1, \ldots, X_l]$, for $1 \leq l \leq n$, such that 
        \begin{enumerate}
            \item[(i)] $g_1(\gamma Z + \delta, \lambda_1 Y_1, \ldots, \lambda_m Y_m) = \kappa_0 f_1(Z, Y_1, \ldots, Y_m) + Y_1^{d_{11}} \ldots Y_m^{d_{1m}}h_0$, where $\kappa_0 = \gamma^{s_0}, h_0 \in \KK[Z, Y_1, \ldots, Y_m]$.
            \item[(ii)] for $2 \leq l \leq n:$
            \begin{multline*}
                g_l(\gamma Z + \delta, \lambda_1 Y_1, \ldots, \lambda_m Y_m, \nu_1 X_1 + q_1, \ldots, \nu_{l-1} X_{l-1} + q_{l-1}) =\\ = \kappa_{l-1} f_l(Z, Y_1, \ldots, Y_m, X_1, \ldots, X_{l-1}) + Y_1^{d_{l1}} \ldots Y_m^{d_{lm}}h_{l-1},
            \end{multline*}
             where $1 \leq i \leq n-1$: $\nu_{i} = \lambda_1^{-d_{i1}} \ldots \lambda_m^{-d_{im}} \kappa_{i-1}, \kappa_i = \nu_i^{s_i}$ and $q_i = \lambda_1^{-d_{i1}} \ldots \lambda_m^{-d_{im}}h_{i-1}$.
        \end{enumerate}
    \end{enumerate}
    Moreover, if $\psi: B_2 \rightarrow B_1$ is an isomorphism, then
    $$\psi(\widehat y_i) = \lambda_i y_{\sigma(i)}, \;\;\; \psi(\widehat z) = \gamma z + \delta(y_1, 
    \ldots, y_m),$$
    $$\psi(\widehat x_1) = \nu_1 x_1 + q_1(z, y_1, \ldots, y_m),$$
    $$\psi(\widehat x_i) = \nu_i x_i + q_i(z, y_1, \ldots, y_m, x_1, \ldots, x_{i-1}), \; \text{for} \; 2 \leq i \leq n-1,$$
    $$\psi(\widehat x_n) = \lambda_1^{-d_{n1}}\ldots \lambda_m^{-d_{nm}}(\kappa_{n-1} x_{n} + h_{n-1}(z, y_1, \ldots, y_m, x_1, \ldots, x_{n-1})).$$
    Conversely, if conditions 1) and 2) hold, then $B_1 \simeq B_2$.
\end{theorem}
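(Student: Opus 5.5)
We follow the proof of Theorem~\ref{main1} and run an induction over the equations of the system. Identify $B_1$ and $B_2$ with a single algebra $B$ through the isomorphism. Theorem~\ref{ML1} gives $\mathrm{ML}(B)=\KK[y_1,\ldots,y_m]=\KK[\widehat y_1,\ldots,\widehat y_m]$. Lemma~\ref{fixit1} then gives $\widehat y_i=\lambda_i y_{\sigma(i)}$ for some permutation $\sigma$ and some $\lambda_i\in\KK^\times$. Next we use that $\mathrm{Frac}(\mathrm{ML}(B))[z]=\mathrm{Frac}(\mathrm{ML}(B))[\widehat z]$. This holds because both rings equal $B\otimes_{\mathrm{ML}(B)}\mathrm{Frac}(\mathrm{ML}(B))$: after inverting the $y_i$, all $x_l$ become polynomials in $z$. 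Together with $B\cap \KK(y_1,\ldots,y_m)=\KK[y_1,\ldots,y_m]$ and the symmetry of the situation, this yields $\widehat z=\gamma z+\delta$ with $\gamma\in\KK^\times$ and $\delta\in\KK[y_1,\ldots,y_m]$. Hence $\KK[z,y]=\KK[\widehat z,\widehat y]$.

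The inductive claim, for $l=0,\ldots,n-1$, is the following. Suppose we already know
$$\KK[z,y,x_1,\ldots,x_l]=\KK[\widehat z,\widehat y,\widehat x_1,\ldots,\widehat x_l],$$
with $\widehat x_i=\nu_i x_i+q_i$ for $i\le l$. We must show three things: $p_{l+1,i}=k_{l+1,\sigma(i)}$, $s_{l,1}=s_{l,2}$, and $g_{l+1}$ expressed in the hatted variables equals $\kappa_l f_{l+1}+y^{d_{l+1}}h_l$. These give $\widehat x_{l+1}=\nu_{l+1}x_{l+1}+q_{l+1}$, which is the next step of the induction. The key tool is the ideal identity
$$y^{k_{l+1}}B\cap \KK[z,y,x_{\le l}]=\bigl(y^{k_{l+1}},f_{l+1}\bigr)\KK[z,y,x_{\le l}].$$
To prove it, view $B$ as $R[x_{l+1},\ldots,x_n]$ modulo the remaining relations, where $R=\KK[z,y,x_{\le l}]$ modulo the first $l$ relations. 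It then suffices to know that $y^{k_{l+1}}B\cap R$ is generated by $y^{k_{l+1}}$ and $f_{l+1}$, i.e.\ that $B/y^{k_{l+1}}B$ is free enough over $R/(y^{k_{l+1}},f_{l+1})$. We would check this by writing elements in the normal form of type~(\ref{el1}).

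Given the ideal identity, we compare exponents exactly as in Theorem~\ref{main1}. Suppose $p_{l+1,i}<k_{l+1,\sigma(i)}$ for some $i$. Localize at all $y_j$ with $j\neq\sigma(i)$; then $f_{l+1}$ would lie in $y_{\sigma(i)}^{k_{l+1,\sigma(i)}}$ times a polynomial ring, which contradicts monicity of $f_{l+1}$ in $x_l$ (in $z$ when $l=0$). By symmetry the exponents are equal, and the ideals coincide. From this, $g_{l+1}(\widehat\cdot)=\kappa' f_{l+1}+y^{d_{l+1}}h'$. Comparing leading coefficients in $x_l$ modulo $y^{d_{l+1}}$, using $\widehat x_l=\nu_l x_l+q_l$ and the monicity of both polynomials, gives $s_{l,1}=s_{l,2}$ and $\kappa'\equiv\nu_l^{s_l}$ (with $\gamma^{s_0}$ for $l=0$). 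Replacing $\kappa'$ by $\kappa_l$ and absorbing the difference into $h_l$, we divide by $\widehat y^{d_{l+1}}=\prod\lambda_i^{d_{l+1,i}}y_{\sigma(i)}^{d_{l+1,i}}$ and obtain the formula for $\widehat x_{l+1}$. At $l=n-1$ this is the stated expression for $\psi(\widehat x_n)$. The converse direction is the same as in Theorem~\ref{main1}. The substitution defines a surjection $A_n\to B_1$ that kills $I_2$; it is surjective by the triangular form of the substitution. Since $B_1$ and $B_2$ are domains of the same dimension by Lemma~\ref{41}, the induced map $B_2\to B_1$ is an isomorphism.

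The main obstacle is the ideal identity in the middle step once $l\ge1$. For $l=0$ it is essentially Theorem~\ref{main1}. In general one must ensure that no element of $\KK[z,y,x_{\le l}]$ becomes divisible by $y^{k_{l+1}}$ in $B$ except through $f_{l+1}$, while the earlier relations also involve powers of the $y_j$. I would first try to prove it by induction on $n$: adjoin $x_{n}$ via Lemma~\ref{id}, and use that $y_j$ is a non-zero-divisor modulo $f_n$ (shown in Lemma~\ref{41}) to control $y^{k}B\cap R$.
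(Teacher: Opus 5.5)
Your outline is the paper's own argument: the paper proves this theorem only by saying the proof of Theorem~\ref{main1} carries over, and you run that proof equation by equation. You are also right that the ideal identity is never proved there. But there is a genuine gap before that point, at the step ``$B\cap\KK(y)=\KK[y]$ plus symmetry gives $\widehat z=\gamma z+\delta$''.

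From $\KK(y)[z]=\KK(y)[\widehat z]$ you only get $\widehat z=az+b$ and $z=a'\widehat z+b'$ with $a,a',b,b'\in\KK(y)$ and $aa'=1$. The fact $B\cap\KK(y)=\KK[y]$ does not control $a$. What you actually need is $B\cap(\KK(y)+\KK(y)z)=\KK[y]+\KK[y]z$, and this fails as soon as $s_0=1$. In that case $f_1=Z+c(Y)$, and $x_1=(z+c(y))/(y_1^{k_{11}}\cdots y_m^{k_{1m}})\in B$ has $z$-degree one with a non-polynomial coefficient.

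Case 2) of (\ref{star1}) explicitly allows $s_0=1$, and there the statement itself is false. Take $m=1$, $n=2$ and the systems $X_1Y^2=Z,\ X_2Y^2=X_1^2$ and $X_1Y^3=Z,\ X_2Y^2=X_1^2$. Both satisfy (\ref{star1}), and after eliminating $Z$ both algebras are $\cong\KK[Y,X_1,X_2]/(X_2Y^2-X_1^2)$. Yet $k_{11}=2\neq 3=p_{11}$; the isomorphism sends $\widehat z\mapsto yz$.

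The exponent comparison has a second gap. Your key tool is the identity for a system's own exponent. To get a contradiction from $p_{l+1,i}<k_{l+1,\sigma(i)}$, however, you need $\widehat y^{K}B\cap\KK[\widehat z,\widehat y,\widehat x_{\le l}]$ with $K=(k_{l+1,\sigma(i)})_i$ the \emph{other} system's exponent. You implicitly take this to be $(\widehat y^{K},\widehat y^{K-p}g_{l+1})$, as in the proof of Theorem~\ref{main1}, where $\widehat y^{p}=\widehat y_1^{p_{l+1,1}}\cdots\widehat y_m^{p_{l+1,m}}$. That formula is false in general:
\begin{itemize}
\item $g_{l+1}^2=\widehat y^{2p}\widehat x_{l+1}^2$ lies in $\widehat y^{K}B$ whenever $K\le 2p$, but not in the two-generator ideal;
\item if the next equation has $s_{l+1}=1$, e.g.\ $g_{l+2}=X_{l+1}$, then even $g_{l+1}$ lies in a strictly higher power of $\widehat y$.
\end{itemize}
Case 1) of (\ref{star1}) permits the second situation, and the statement fails again. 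The systems $X_1Y^2=Z^2,\ X_2Y^3=X_1$ and $X_1Y^3=Z^2,\ X_2Y^2=X_1$ both give $\KK[Y,Z,X_2]/(X_2Y^5-Z^2)$, although $k_{11}\neq p_{11}$.

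So no completion of this argument can prove the theorem as stated. One needs at least $s_l\ge 2$ for all $l$ in both systems. Even then, the comparison must be redone with the correct description of $\widehat y^{K}B\cap\KK[\widehat z,\widehat y,\widehat x_{\le l}]$, which contains powers of $g_{l+1}$. One possible route is to compare degrees modulo $y_{\sigma(i)}$ in both directions, rather than relying on the two-generator formula.
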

\begin{proof}
    The proof is exactly the same as the proof of Theorem~\ref{main1}, considering Theorem~\ref{ML1} and Lemma~\ref{fixit1}.
\end{proof}

\end{document}